\documentclass[12pt, reqno]{amsart}

\usepackage[utf8]{inputenc}
\usepackage[T2A]{fontenc} 
\usepackage[english]{babel}
\usepackage{textcase} 

\usepackage[margin=2cm]{geometry}

\usepackage{amssymb}
\usepackage{enumitem}
\usepackage{graphicx}
\graphicspath{{}{figs/}}
\usepackage[svgnames, dvipsnames]{xcolor}
\usepackage[all]{xy}

\usepackage{hyperref}
\hypersetup{
    hypertexnames=false,
    colorlinks,
    linkcolor={red!50!black},
    citecolor={blue!50!black},
    urlcolor={blue!80!black}
}

\newtheorem{corollary}[subsection]{Corollary}

\newtheorem{subtheorem}[subsubsection]{Theorem}
\newtheorem{sublemma}[subsubsection]{Lemma}

\newtheorem{subcorollary}[subsubsection]{Corollary}
\newtheorem{subremark}[subsubsection]{Remark}
\newtheorem{subexample}[subsubsection]{Example}
\newtheorem{subdefinition}[subsubsection]{Definition}

\newtheorem*{subproposition*}{Proposition}

\makeatletter
\@addtoreset{subsection}{section}
\@addtoreset{equation}{section}
\@addtoreset{figure}{section}
\@addtoreset{table}{section}
\makeatother

\newcommand\term[1]{\textit{\sffamily#1}}

\newcommand\Aman{A}
\newcommand\Bman{B}
\newcommand\Cman{C}

\newcommand\Eman{E}

\newcommand\Hman{H}

\newcommand\Jman{J}
\newcommand\Kman{K}

\newcommand\Mman{M}
\newcommand\Nman{N}

\newcommand\Pman{P}
\newcommand\Qman{Q}

\newcommand\Sman{S}
\newcommand\Tman{T}
\newcommand\Uman{U}
\newcommand\Vman{V}
\newcommand\Wman{W}
\newcommand\Xman{X}
\newcommand\Yman{Y}

\newcommand\bN{\mathbb{N}}

\newcommand\bR{\mathbb{R}}

\newcommand\restr[2]{#1|_{#2}}
\newcommand\Int[1]{\mathrm{Int}(#1)}
\newcommand\Cl[1]{\overline{#1}}

\newcommand\mycolor[1]{}  

\newcommand\todo[1]{{}}

\newcommand\hcl[2][\empty]{\mathrm{hcl}\ifx#1\empty\relax\else_{#1}\fi(#2)}

\newcommand\BranchSet[1]{#1_{br}}
\newcommand\HausdSet[1]{#1_{hs}}

\newcommand\ChNSQ[1][\empty]{\Gamma_{#1}} 
\newcommand\BrChNSQ[1][\empty]{\Gamma^{0}_{#1}}

\newcommand\Aone{{\mycolor{Mahogany}\Mman}}        
\newcommand\Abr{{\mycolor{Mahogany}\BranchSet{\Mman}}}    
\newcommand\Ahaus{{\mycolor{Mahogany}\HausdSet{\Mman}}}    
\newcommand\Aedg{{\mycolor{Mahogany}\Aone^{1}}} 
\newcommand\Avrt{{\mycolor{Mahogany}\Aone^{0}}} 

\newcommand\Acomp[1][\empty]{{\mycolor{Mahogany}\Jman_{#1}}} 
\newcommand\AcompImg[2]{\Acomp[(#1;#2)]}  
\newcommand\AcompCImg[2]{\Acomp[{[#1;#2]}]} 

\newcommand\Achart[1][\empty]{{\mycolor{Mahogany}\phi_{#1}}}
\newcommand\Bchart[1][\empty]{{\mycolor{Mahogany}\psi_{#1}}}
\newcommand\apr[1][\empty]{{\mycolor{BlueViolet}\pi_{#1}}}

\newcommand\Bone{{\mycolor{Mahogany}\Nman}}        

\newcommand\Xone{{\mycolor{ForestGreen}\ChNSQ}}    
\newcommand\Xvrt{{\mycolor{ForestGreen}\BrChNSQ}}   
\newcommand\Xedg{{\mycolor{ForestGreen}\Gamma^{1}}}             

\newcommand\cbr{\approx}

\newcommand\chcl[1]{{\mycolor{red}[#1]}}

\newcommand\eps{\varepsilon}

\newcommand\trm[1]{{\mycolor{red}#1}}
\newcommand\bRp[1]{\bR^{n}_{+}}

\newcommand\lms[2][\empty]{%
    {\mycolor{red}%
        \delta_{#2\ifx#1\empty\relax\else,#1\fi}
    }%
}

\newcommand\epr{p}  

\newcommand\EmptyClass{\Pman}
\newcommand\CEman{{\mycolor{Mahogany}\Cl{\Eman}}}
\newcommand\OEman{{\mycolor{NavyBlue}\Eman}}

\newcommand\relat{\sim}
\newcommand\Acw{{\mycolor{ForestGreen}\Kman}}

\newcommand\func{f}
\newcommand\Erelat{\Eman/\!\!\relat\,}
\newcommand\CErelat{\CEman/\!\!\relat\,}

\newcommand\lbnd{s} 
\newcommand\mbnd{t} 
\newcommand\lComp{\Pman} 
\newcommand\mComp{\Qman} 

\newcommand\liComp{\Pman_{i,\lambda}} 
\newcommand\mjComp{\Qman_{j,\mu}} 
\newcommand\libnd{\bar{s}} 
\newcommand\mjbnd{\bar{t}} 

\newcommand\lSegm{\Sman} 
\newcommand\mSegm{\Tman} %

\begin{document}
\author{Ihor Vlasenko, Sergiy Maksymenko}
\email{vlasenko@imath.kiev.ua, maks@imath.kiev.ua}
\address{Institute of Mathematics of the National Academy of Sciences of Ukraine, Kyiv, Ukraine}

\title{One-dimensional non-Hausdorff manifolds and cell complexes}

\keywords{Non-Hausdorff manifold, branch point, cell complex}

\begin{abstract}
Say that a connected non-Hausdorff one-dimensional manifold $M$ is \emph{graph-like}, whenever the set $M_{br}$ of its non-Hausdorff (called ``branch'') points is locally finite and every connected component of its complement has a countable base.
We prove that for every graph-like manifold $M$ there exists a one-dimensional CW complex $K$ with the set of vertices $K^{(0)}$, a vertex $v\in K^{(0)}$ and a (surjective) quotient map $\pi\colon M \to K\setminus\{v\}$ such that $\pi^{-1}(K^{(0)}\setminus \{v\}) = M_{br}\cup\partial M$.
Conversely, existence of such quotient map $\pi$ and the assumption that $M_{br}$ is nowhere dense imply that $M$ is graph-like.

Moreover, in this case $K\setminus\{v\}$ is the minimal Hausdorff factor of $M$, that is, for every continuous map $f\colon M \to N$ into a Hausdorff space $N$ there exists a unique continuous map $\hat{f}\colon K\setminus\{v\}\to N$ such that $f = \hat{f}\circ\pi$.



\end{abstract}

\maketitle

\section{Introduction}\label{sect:introduction}
Recall that a \term{manifold} is usually defined as a locally Euclidean space $\Aone$ that is also Hausdorff and has a countable base.
The latter two properties are equivalent to the metrizability of $\Aone$ and to the possibility of embedding it into Euclidean spaces.
Although relatively little work has been devoted to non-metrizable, and in particular non-Hausdorff, manifolds, they very often appear implicitly as quotient spaces of certain decompositions of spaces, in particular of foliations on manifolds, although in most cases they are rather ``pathological'' and do not carry substantial information.
On the other hand, under certain assumptions on the ``regularity'' of the behaviour of the leaves of foliations, their leaf spaces sometimes allow one to ``encode'' certain combinatorial properties of the decompositions themselves.

Let us briefly list known publications on non-metrizable manifolds, interest in which has grown significantly in recent years.
In~\cite{HaefligerReeb:EM:1957}, the existence of smooth structures on the \term{non-Hausdorff letter $\mathbb{Y}$} was proved.
The so-called \term{long line} $\mathbb{X}$ and the \term{Prüfer manifold} were studied in~\cite[Appendix~A]{Spivak:DG:1:1979}.
In~\cite{Nyikos:AM:1992}, it was shown that $\mathbb{X}$ admits non-diffeomorphic $C^{\omega}$ (real-analytic) structures, but it remains unclear whether $\mathbb{X}$ admits non-diffeomorphic $C^{\infty}$ structures.
In~\cite{GartsideGauldGreenwood:PrAMS:2008}, groups of homeomorphisms of non-Hausdorff manifolds were studied.
In particular, it was proved that for every group $G$ there exists a (possibly non-Hausdorff) manifold whose homeomorphism group is isomorphic to $G$.
A systematic study of non-metrizable manifolds was carried out in the monograph~\cite{Gauld:NonMetrManif:2014}.
In~\cite{OConnel:TA:2023, OConnel:TA:2024}, vector bundle structures over non-Hausdorff manifolds were investigated, and in~\cite{LysynskyiMaksymenko:Wisla:2025, LysynskyiMaksymenko:TA:2026}, classifications of all differentiable structures on the \term{non-Hausdorff line with two origins $\mathbb{L}$} and on $\mathbb{Y}$ were obtained.

General properties of non-metrizable manifolds, in particular their homogeneity and groups of homeomorphisms, were studied in~\cite{BaillifGabard:PrAMS:2008, BaillifGabardGauld:ProcAMS:2014, Baillif:arxiv:2025}.

Physical applications of non-Hausdorff manifolds were discussed in~\cite{Hajicek:CMP:1971,HellerPysiakSasin:JMP:2011, Luc:FPh:2020, LucPlacek:PhilSci:2020, NeimanConnell:PhRevD:2024}.
In~\cite{Haucourt:MSCS:2025}, devoted to parallel computational processes (threads), it was shown that every oriented graph can be ``split'' at each vertex so that the resulting space becomes a non-Hausdorff manifold.

\subsection{Main result of the paper}
Let $\Kman$ be a one-dimensional CW complex, see Subsection~\ref{sect:1CW}, and let $\Pman \subset \Kman$ be some (possibly empty) set of its vertices ($0$-cells).
Then the complement $\Xone=\Kman\setminus\Pman$ is called an \term{open one-dimensional CW complex} or a \term{topological graph}.
If $\Xone$ ``\term{has edges without endpoints}'', i.e.\ there exists an open $1$-cell $e$ in $\Kman$ such that $\overline{e} \cap \Pman = \varnothing$, then the topological graph $\Xone$ will be called \term{open}.

Now let $\Aone$ be a one-dimensional \term{manifold}, that is, a topological space locally homeomorphic to $[0;1)$.
It is well known and easily proved that if $\Aone$ is connected, Hausdorff, and has a countable base, then it is homeomorphic to one of the following manifolds: $[0;1]$, $[0;1)$, $(0;1)$, $S^1$.
If it is non-Hausdorff, then one can distinguish points at which it ``fails to be Hausdorff''.
Namely, we say that points $x,y\in\Aone$ are \term{inseparable (by open neighbourhoods)} if any neighbourhoods of these points intersect.
A point $x\in\Aone$ that is inseparable from some other point is called a \term{branch point} of $\Aone$, see Subsection~\ref{sect:branch_points}.
If it is separable by open neighbourhoods from all points $y\in\Aone\setminus\{x\}$, then $x$ is called a \term{Hausdorff} point of $\Aone$.

Clearly, the relation \term{being inseparable} for points of $\Aone$ is reflexive and symmetric but not transitive.
Then one can consider the transitive closure of this relation: points $x,y\in\Aone$ are called \term{chain inseparable} if there exists a sequence $x=x_0,x_1,\ldots,x_k=y \in \Aone$ such that $x_i$ is inseparable from $x_{i+1}$ for all $i=0,1,\ldots,k-1$.
This relation is already an equivalence relation on $\Aone$, and we denote it by $\cbr$, see Remark~\ref{rem:relation_hcl_NH} concerning its connection with~\cite{Baillif:arxiv:2025}.

Let $\Xone = \Aone/\!\cbr$ be the corresponding quotient space endowed with the quotient topology with respect to the quotient map $\apr\colon\Aone\to\Xone$.
Denote by $\Abr$ the set of all branch points of $\Aone$, and let $\Ahaus=\Aone\setminus\Abr$ be its complement --- the set of all Hausdorff points of $\Aone$.
We also set
\begin{align*}
    \Avrt &:= \Abr\cup\partial\Aone,                            & \Xvrt  &:= \apr(\Avrt), \\
    \Aedg &:= \Aone\setminus\Avrt=\Ahaus\setminus\partial\Aone, & \Xedg &:= \apr(\Aedg).
\end{align*}

Suppose that $\Xone$ has the structure of a topological graph.
We say that this structure is \term{compatible with the relation $\cbr$ of chain inseparability} on $\Aone$ if $\Xvrt$ is the set of vertices and, accordingly, the connected components of $\Xedg$ are the edges of $\Xone$.

The following theorem is the main result of the paper.
\begin{subtheorem}\label{th:1cw_struct}
Let $\Aone$ be a connected manifold that is not homeomorphic to the circle $S^1$.
Then the following conditions are equivalent:
\begin{enumerate}[label={\rm(G\arabic*)}]
\item\label{enum:th:1cw_struct:loc}
$\Abr$ is locally finite, and every connected component of $\Aedg$ has a countable base;
\item\label{enum:th:1cw_struct:1cw}
$\Abr$ is nowhere dense, and $\Xone$ has a structure of a topological graph compatible with the relation of chain inseparability on $\Aone$.
\end{enumerate}

Under these conditions, $\Xone$ is the ``minimal Hausdorff factor of $\Aone$'' in the sense that $\Xone$ is Hausdorff and for every continuous map $f\colon\Aone\to\Bone$ into a Hausdorff space $\Bone$ there exists a unique continuous map $\hat{f}\colon\Xone\to\Bone$ that makes the following diagram commutative:
\[
\xymatrix{
    \Aone \ar[rr]^{f} \ar[rd]_{\apr} & & \Bone \\
    & \Xone \ar[ur]_{\hat{f}}
}
\]
\end{subtheorem}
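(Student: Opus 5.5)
We first collect local facts about a one-dimensional manifold $\Aone$. Each point $x$ has a chart $\Achart\colon U\to[0;1)$ or $(-1;1)$. If $x,y$ are inseparable and $x\neq y$, then neither is a boundary point. Moreover, $x$ and $y$ share a germ of a half-interval on at least one side. Concretely, there are half-open arcs $(x;a]$ and $(y;b]$ whose intersection contains points accumulating at both $x$ and $y$. By one-dimensionality and connectedness of small intervals, these arcs must in fact coincide near $x$ and $y$. Consequently every point has at most two ``sides'', and on each side it is inseparable from a family of points sharing that side.

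Hence each component $J$ of $\Aedg$ is an open subset of $\Aone$ consisting of Hausdorff interior points. The Hausdorff property holds within $J$: two points of $J$ inseparable in $J$ would be branch points. So $J$ is a connected Hausdorff 1-manifold without boundary. When it has a countable base it is homeomorphic to $(0;1)$ or $S^1$. The case $S^1$ forces $J=\Aone$, since $J$ would be open and compact, hence closed. That case is excluded by hypothesis, so $J\cong(0;1)$.

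For \ref{enum:th:1cw_struct:loc}$\Rightarrow$\ref{enum:th:1cw_struct:1cw}: local finiteness of $\Abr$ in a 1-manifold implies nowhere density. For each component $J\cong(0;1)$, consider its two ends. By local finiteness, each end either converges to a nonempty set of points of $\Avrt$, all pairwise chain inseparable, or escapes, meaning it has no limit. The restriction $\apr|_J$ is injective, because Hausdorff points are $\cbr$-classes of size one. So $\apr(J)$ becomes an open 1-cell of $\Kman$, whose endpoints are the $\cbr$-classes of the limit points. A new vertex $v$ serves as the endpoint of every escaping end, and we set $\Pman=\{v\}$.

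The main obstacle is showing that the quotient topology on $\Xone$ agrees with the weak (CW) topology on $\Kman\setminus\{v\}$. The key input is that each $\cbr$-class $c\subset\Avrt$ is a discrete closed subset. A set $W\subset\Xone$ is open iff $\apr^{-1}(W)$ is open. By the side-structure lemma, $\apr^{-1}(W)$ is open at a point of $c$ iff $W$ contains an initial segment of every edge-end attached to $c$. This is exactly the CW condition at the vertex $c$, so the characteristic maps glue correctly. The main technical work lies in verifying that attaching sides near a point $x\in c$ correspond bijectively to edge-ends incident to $c$; local finiteness is used here.

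Conversely, assume \ref{enum:th:1cw_struct:1cw}. Each component of $\Aedg$ maps homeomorphically onto an edge, which is an open cell and thus has a countable base. Local finiteness follows from three facts: vertices of a CW complex are discrete, the map $\apr$ is continuous, and $\Abr$ is nowhere dense. Near $x\in\Abr$, a chart interval $U$ has image meeting only finitely many vertices, since a compact arc in a CW complex meets finitely many cells. So $U\cap\Avrt\subset\apr^{-1}(\text{finite set})$. Each fibre in $U$ is discrete, because a chart interval of non-Hausdorff points inside one class would contradict nowhere density together with injectivity on $\Aedg$. Finally, $\Xone$ is Hausdorff as a CW complex minus a vertex. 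Every continuous $f$ into a Hausdorff $\Bone$ identifies inseparable points, since limits of a net in the common neighbourhoods are unique. Hence $f$ is constant on $\cbr$-classes, and $\hat f$ exists uniquely by the universal property of quotient maps.
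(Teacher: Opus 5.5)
Your plan follows the paper's architecture, but in (G2)$\Rightarrow$(G1) the step ``each fibre in $U$ is discrete'' does not work. A non-discrete closed subset of an arc need not contain a subarc, so an accumulating fibre does not produce a ``chart interval of non-Hausdorff points''. Moreover, the ingredients you list (discreteness of the vertex set, continuity of $\apr$, nowhere density of $\Abr$, injectivity on $\Aedg$) are genuinely insufficient.

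Here is a counterexample. Take $\bR$ and add shortcuts, as in Example~\ref{exmp:many_portals}, joining $0$ with every point $\pm\frac1n$, $n\geq2$.
\begin{itemize}
\item $\Abr$ consists of $0$, the points $\pm\frac1n$ and the shortcuts. It is closed, nowhere dense and a single $\cbr$-class, so $\Xvrt$ is one point $q$.
\item $\Aedg$ is open and saturated, so $\restr{\apr}{\Aedg}$ is a homeomorphism onto $\Xedg$. The components of $\Xedg$ are the images of the gaps $(\frac1{n+1};\frac1n)$, $(-\frac1n;-\frac1{n+1})$ and of the two rays.
\item Yet $\apr^{-1}(q)\cap[-\frac12;\frac12]\supset\{0,\pm\frac1n\}$ is not discrete, and $\Abr$ is not locally finite.
\end{itemize}
What excludes this space from (G2) is only the CW topology at $q$: every neighbourhood of $q$ contains all but finitely many whole edges.

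That is exactly the missing idea, and it is how the paper argues. Suppose $x\in\Avrt$ is not isolated, and take $x_i\to x$ in a chart. Since $\Avrt$ is nowhere dense, pick points of $\Aedg$ between consecutive $x_i$; this needs $\partial\Aone$ as well as $\Abr$, and closedness of $\Avrt$, which comes from openness of $\Xedg$. The component of $\Wman\cap\Aedg$ through such a point is a \emph{whole} component of $\Aedg$. Indeed, by Lemmas~\ref{lm:ok_emb} and~\ref{lm:okc_emb}, its frontier lies in the Hausdorff closures of its two endpoints in the chart, and these lie in $\Avrt$. Hence every neighbourhood of $\apr(x)$ contains infinitely many open edges, contradicting Lemma~\ref{lm:1cw_vert_nbh}\ref{enum:lm:1cw_vert_nbh:nbh}. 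Your fact that a compact arc meets finitely many cells would also give a contradiction. However, it must be applied to edges rather than vertices, and it still needs this ``gaps are whole edges'' argument.

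Less serious points:
\begin{itemize}
\item Your opening ``local facts'' are false as stated. Two copies of $[0;1)$ glued along $(0;1)$ satisfy (G1) and have two inseparable boundary points.
\item Without local finiteness, inseparable points need not share any half-interval germ, e.g.\ Example~\ref{exmp:1manif:1_n_seq} with the sequence $(0,\pm\frac1n)$.
\item ``Open and compact, hence closed'' needs Lemma~\ref{lm:ahaus}\ref{enum:lm:ahaus:compact}, because $\Aone$ is not Hausdorff.
\item Sides of points of a class correspond to edge-ends surjectively, not bijectively: in $\mathbb{L}$ four sides give two edge-ends.
\item The homeomorphism $\Aedg\to\Xedg$ in (G2)$\Rightarrow$(G1) requires the quotient-restriction argument, not mere bijectivity.
\item The decisive local verification of (G1)$\Rightarrow$(G2), statements A7 and A10 in the paper, is announced rather than carried out.
\end{itemize}
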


\begin{subdefinition}
A connected one-dimensional manifold that is not homeomorphic to the circle will be called \term{graph-like} if it satisfies one of the equivalent conditions~\ref{enum:th:1cw_struct:loc} or~\ref{enum:th:1cw_struct:1cw} of Theorem~\ref{th:1cw_struct}.
\end{subdefinition}

\begin{subremark}\rm
\begin{enumerate}[label={\rm\alph*)}, wide]
\item
If $\Abr$ is locally finite, then it is nowhere dense.
\item
Example~\ref{exmp:1manif:1_n_seq} shows that $\Abr$ can be nowhere dense and $\Xone$ can have a structure $\gamma$ of a topological graph, while $\Abr$ is not locally finite and $\gamma$ is not compatible with the relation $\cbr$.
\item
On the other hand, Examples~\ref{exmp:Abr_open} and~\ref{exmp:Xone_segment} below show that $\Abr$ can be dense (and hence not locally finite), while $\Xone$ has a structure of a topological graph compatible with $\cbr$.

\item
In the exceptional case $\Aone = S^1$, the circle is graph-like and Hausdorff, and the map $\apr\colon\Aone\to\Xone$ is a homeomorphism.
However, since the circle $\Xone$ ``has no vertices'', one cannot choose any canonical structure of a topological graph on it, although, of course, infinitely many such structures exist.
\end{enumerate}
\end{subremark}

\subsection{Examples of graph-like manifolds}\label{sect:examples}
\begin{subexample}\label{exmp:classif_1_manifs}
\rm
Recall that every connected Hausdorff one-dimensional manifold $\Aone$ with a countable base is homeomorphic to one of the following manifolds: $[0;1]$, $[0;1)$, $(0;1)$, $S^1$.
If it is different from the circle, we call it a \term{segment}.
Obviously, every segment is graph-like and has the structure of an open one-dimensional manifold whose vertices correspond to the boundary points.
\end{subexample}

\newcommand\Fol[1][\empty]{\mathcal{F}_{#1}}
\newcommand\RmSet{Q}
For $c\in\bR$, denote by $l_c = \bR\times\{c\}$ the horizontal line in $\bR^2$.
Then the family
\[
    \Fol = \{ l_{c} \mid c\in\bR \}
\]
of all these lines forms the ``standard'' one-dimensional foliation on $\bR^2$.
Let $\RmSet\subset\bR^2$ be an arbitrary closed subset.
Then $\bR^2\setminus\RmSet$ is an open subset of $\bR^{2}$, hence a $2$-manifold.
Denote by $\Fol[\RmSet]$ the partition of $\bR^2\setminus\RmSet$ into the connected components of the sets $l_{c}\setminus\RmSet$ for all $c\in\bR$.
This partition is also a one-dimensional foliation on the manifold $\bR^2\setminus\RmSet$.
It is easy to check that the corresponding quotient space $\Mman_{\RmSet} = (\bR^2\setminus\RmSet)/\Fol[\RmSet]$ is a one-dimensional manifold, but it is not necessarily Hausdorff.

\begin{figure}[htbp!]
\begin{tabular}{ccc}
\includegraphics[height=4cm]{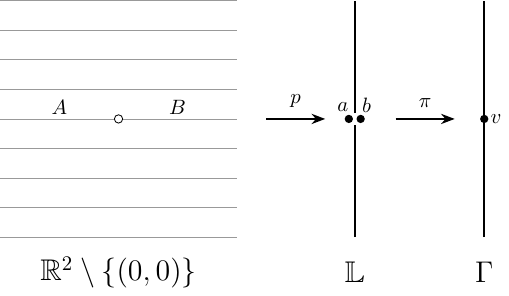} & \qquad\qquad &
\includegraphics[height=4cm]{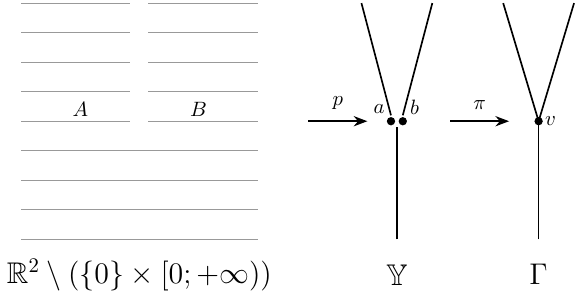} \\
(a) && (b)
\end{tabular}
\caption{The line with two origins $\mathbb{L}$ and the non-Hausdorff letter $\mathbb{Y}$}
\label{fig:line2origs}
\end{figure}

\begin{subexample}\label{exmp:1manif:L}
\rm
Let $\RmSet = \{(0,0)\}$ be the origin.
Then $\Mman_{\RmSet}$ is called the \term{line with two origins}, see Fig.~\ref{fig:line2origs}(a), and is denoted by $\mathbb{L}$.

Note that the point $(0,0)\in\bR^2$ splits the horizontal line $y=0$ into two open intervals $A$ and $B$, which correspond to two branch points $a$ and $b$ of the manifold $\mathbb{L}$.
Clearly, these points are inseparable from each other and are mapped to a single point $v$ in the quotient space $\Xone = \mathbb{L}/\cbr$.
Moreover, $\mathbb{L}$ is graph-like, and $\Xone$ is an open one-dimensional CW complex with one vertex $v$.

It is easy to see that $\mathbb{L}$ can also be obtained by gluing two copies of $\bR$ via the identity map of the set of nonzero numbers $\bR\setminus0$.
\end{subexample}

\begin{subexample}\label{exmp:1manif:Y}
\rm
Let $\RmSet = \{0\} \times [0;+\infty)$ be the nonnegative part of the $Oy$-axis, see Fig.~\ref{fig:letter_x}.
Then $\Mman_{\RmSet}$ is called the \term{non-Hausdorff letter $Y$} and is denoted by $\mathbb{Y}$, see Fig.~\ref{fig:line2origs}(b).
It also has exactly two branch points $a$ and $b$.
These points are inseparable from each other and give a single point $v$ in the quotient space $\Xone = \mathbb{Y}/\cbr$.
Again, $\mathbb{Y}$ is graph-like, and $\Xone$ is an open one-dimensional CW complex with one vertex $v$.

Also note that $\mathbb{Y}$ can be obtained by gluing two copies of $\bR$ via the identity map of the (open) set of \emph{negative} numbers.
\end{subexample}

\begin{subexample}\label{exmp:1manif:X}
\rm
Let
\[
    \RmSet =\{(0,1)\} \cup  \bigl(\{-1\} \times (\infty;0]\bigr) \cup \bigl(\{1\}\times[0;+\infty)\bigr)
\]
be the union of the point $(0,1)$ and two vertical rays starting on the $Ox$-axis, one pointing upward and the other downward.
The corresponding space $\Mman_{\RmSet}$ will be denoted by $\mathbb{X}$.

\begin{figure}[htbp!]
\includegraphics[height=5cm]{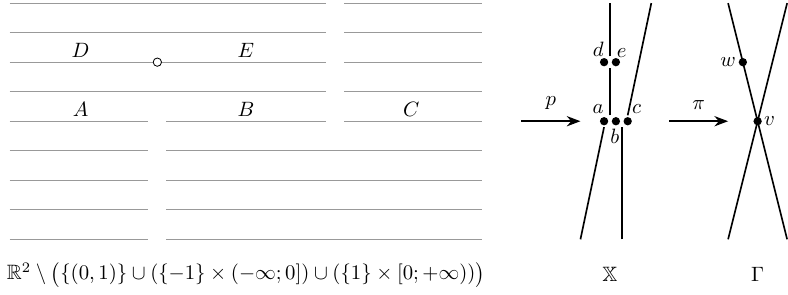}
\caption{The manifold $\mathbb{X}$}
\label{fig:letter_x}
\end{figure}

Note that the points $(0,-1)$ and $(0,1)$ split the $Ox$-axis into three open intervals $A$, $B$, $C$.
They correspond to three branch points $a$, $b$, $c$ in $\mathbb{X}$.
It is easy to see that the pairs $a$ and $b$, and $b$ and $c$ are inseparable.
On the other hand, $a$ and $c$ are separable by open neighbourhoods, but they are chain inseparable, and in the quotient space $\Xone = \mathbb{X}/\cbr$ all three points $a$, $b$, $c$ merge into one point $v$.

Similarly, the point $(0,1)$ splits the interval $(-\infty,1)\times\{1\}$ into two intervals $D$ and $E$.
These correspond to two inseparable branch points $d$ and $e$ in $\mathbb{X}$, which also merge into one point $w$ in $\Xone$.

Clearly, $\mathbb{X}$ is graph-like, and $\Xone$ is an open one-dimensional manifold homeomorphic to the letter $X$ and acquires the structure of a topological graph with vertices $v$ and $w$, which correspond to the chain inseparability classes of the branch points of $\mathbb{X}$.
\end{subexample}

\begin{subexample}[A non-graph-like manifold whose set of branch points is not locally finite]\label{exmp:1manif:1_n_seq}
\rm
Let $\Aman = \{(0,0)\} \cup \{(0,1/n) \mid n \in \bN\}$ be the origin together with a sequence on the $Oy$-axis converging to it.
In this case, each point $(0, c)\in \Aman$ divides the line $y=c$ into two open intervals, which give a pair of inseparable points in the manifold $\Mman_{\Aman}$.
It follows that the set $\Abr$ of branch points of $\Mman_{\Aman}$ consists of such pairs, which ``accumulate'' near the pair corresponding to the splitting of the $Ox$-axis by the origin.
In particular, $\Abr$ is not locally finite (i.e.\ the first requirement of condition~\ref{enum:th:1cw_struct:loc} is violated), and therefore $\Mman_{\Aman}$ is not graph-like.

At the same time, $\Abr$ is nowhere dense, i.e.\ the first requirement of condition~\ref{enum:th:1cw_struct:1cw} is satisfied.

On the other hand, the projection of the plane onto the $Oy$-axis induces a homeomorphism from the quotient space $\Xone = \Mman_{\Aman}/\cbr$ onto the real line $\bR$.
Thus $\Xone$ is a topological graph.
But when we identify $\Xone$ with $\bR$, the image $\Xvrt := \apr(\Abr) =\{1/n\}_{n\in\bN} \cup \{0\}$ is a convergent sequence together with its limit, and therefore there is no cell decomposition of $\bR$ for which $\Xvrt$ would be the set of vertices.
In other words, the second requirement of condition~\ref{enum:th:1cw_struct:1cw} is violated.
\end{subexample}

\subsection{A general construction of manifolds}
\label{sect:manif_general_construction}

\newcommand\LTwo[1]{|#1|^2}
\newcommand\Ua[2]{U_{#1,#2}}
\newcommand\phom[2]{\phi_{#1,#2}}
\newcommand\iaa{a}
\newcommand\ibb{b}
\newcommand\icc{c}
\newcommand\Msp[1]{\Eman_{#1}}

Let $(\Lambda,\leq)$ be an arbitrary partially ordered set.
For each $\iaa\in\Lambda$ fix some topological space, and let $\Eman = \mathop{\sqcup}\limits_{\iaa\in\Lambda} \Msp{\iaa}$ be the disjoint union of these spaces, i.e.\ a subset $\Uman\subset\Eman$ is open if and only if its intersection $\Uman\cap\Msp{\iaa}$ is open in $\Msp{\iaa}$ for all $\iaa\in\Lambda$.
For each pair $\iaa < \ibb \in \Lambda^2$ fix an \term{open embedding}
\[
    \Msp{\iaa}       \supset
    \Ua{\iaa}{\ibb}  \xrightarrow{~\phom{\iaa}{\ibb}~}
    \Msp{\ibb}
\]
of some open subset $\Ua{\iaa}{\ibb} \subset \Msp{\iaa}$.
In particular, $\Ua{\iaa}{\ibb}$ may be empty, in which case $\phom{\iaa}{\ibb}$ is in fact not defined.

We introduce the following equivalence relation $\sim$ on $\Eman$.
Let $x, y\in \Eman$.
We say that $x\sim y$ if and only if either $x=y$ or there exists a finite sequence
$x = x_1, x_2, \ldots, x_m = y \in \Eman$ such that for each $i=1,\ldots,m-1$
\begin{itemize}[itemsep=1ex]
\item
either $x_i \in\Ua{\iaa}{\ibb}$ and $x_{i+1} = \phom{\iaa}{\ibb}(x_i)$ for some $\iaa<\ibb\in\Lambda$ depending on $i$;
\item
or $x_{i} \in \phom{\iaa}{\ibb}(\Ua{\iaa}{\ibb})$ and $x_{i+1} = \phom{\iaa}{\ibb}^{-1}(x_{i})$  for some $\iaa<\ibb\in\Lambda$ depending on $i$.
\end{itemize}
In other words, one can pass from the point $x$ to the point $y$ in a finite number of steps by means of the embeddings $\phom{\iaa}{\ibb}$ or their inverses.

Let further $\Aone = \Eman/\!\sim$ be the set of equivalence classes and $p\colon\Eman\to\Aone$ the corresponding quotient map.
We endow $\Aone$ with the quotient topology with respect to $p$.
We say that $\Aone$ is \term{glued from the family $\{\Msp{\iaa}\}_{\iaa\in\Lambda}$ by means of the embeddings $\{\phom{\iaa}{\ibb}\}_{\iaa<\ibb\in\Lambda}$}.

Note that we do not require ``functoriality'', such as $\phom{\iaa}{\icc} = \phom{\ibb}{\icc} \circ \phom{\iaa}{\ibb}$, and therefore the restriction $\restr{p}{\Msp{\iaa}}\colon\Msp{\iaa}\to\Aone$ is not always injective.
\begin{sublemma}\label{lm:gluing_open_map}
The quotient map $p\colon\Eman\to\Aone$ is open.
\end{sublemma}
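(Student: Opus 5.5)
The plan is the standard one for quotient maps: show that the saturation $p^{-1}(p(\Uman))$ of every open set $\Uman\subset\Eman$ is open in $\Eman$. Since $\Aone$ carries the quotient topology, this is exactly what is needed for $p(\Uman)$ to be open. Because $\Eman$ is a disjoint union, it suffices to check that $p^{-1}(p(\Uman))\cap\Msp{\ibb}$ is open in $\Msp{\ibb}$ for every $\ibb\in\Lambda$.

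First I describe the saturation explicitly. Call an \emph{elementary move} either one of the maps $\phom{\iaa}{\ibb}\colon\Ua{\iaa}{\ibb}\to\Msp{\ibb}$ or one of its inverses $\phom{\iaa}{\ibb}^{-1}\colon\phom{\iaa}{\ibb}(\Ua{\iaa}{\ibb})\to\Msp{\iaa}$. For a subset $\Vman\subset\Eman$, let $h(\Vman)$ denote the union of $\Vman$ with the images of $\Vman\cap\mathrm{dom}(g)$ under all elementary moves $g$. By the definition of $\sim$, a point $y$ is equivalent to some point of $\Uman$ exactly when $y$ is reached from $\Uman$ by a finite chain of elementary moves. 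Hence
\[
    p^{-1}(p(\Uman)) = \bigcup_{k\geq0} h^{k}(\Uman),
\]
where $h^{0}(\Uman)=\Uman$.

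Next I show that $h$ maps open sets to open sets. Let $\Vman$ be open and let $g=\phom{\iaa}{\ibb}$. Then $\Vman\cap\Ua{\iaa}{\ibb}$ is open in $\Msp{\iaa}$, and since $g$ is an open embedding, its image $g(\Vman\cap\Ua{\iaa}{\ibb})$ is open in $\Msp{\ibb}$, hence open in $\Eman$. Now let $g=\phom{\iaa}{\ibb}^{-1}$. The set $\Vman\cap\phom{\iaa}{\ibb}(\Ua{\iaa}{\ibb})$ is open in $\phom{\iaa}{\ibb}(\Ua{\iaa}{\ibb})$. Since $\phom{\iaa}{\ibb}$ is a homeomorphism onto this open image, the preimage of that set is open in $\Ua{\iaa}{\ibb}$. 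As $\Ua{\iaa}{\ibb}$ is open in $\Msp{\iaa}$, the preimage is open in $\Msp{\iaa}$ and hence in $\Eman$. So $h(\Vman)$ is a union of open sets and is therefore open.

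By induction on $k$, every $h^{k}(\Uman)$ is open, so their union $p^{-1}(p(\Uman))$ is open. This proves that $p$ is open.

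The only point requiring care is the identification of the saturation with the union of iterated images, in particular checking that chains using both the $\phom{\iaa}{\ibb}$ and their inverses are all captured by $h$; with the definition of $\sim$ as stated, this is immediate. No functoriality of the embeddings is needed anywhere.
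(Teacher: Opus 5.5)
Your proof is correct and is essentially the paper's argument. The paper also writes the saturation $p^{-1}(p(V))$ as the increasing union $\bigcup_{i\geq0}V_i$, where $V_{i+1}$ adds to $V_i$ its images under all $\phi_{a,b}$ and $\phi_{a,b}^{-1}$; each $V_i$ is open because the $\phi_{a,b}$ are open embeddings of open sets, so $p(V)$ is open by the definition of the quotient topology.
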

\begin{proof}
Let $\Vman\subset\Eman$ be an arbitrary open set.
Set $\Vman_0 = \Vman$ and further, by induction, for $i\geq0$ put
\[
    \Vman_{i+1} =
        \Vman_{i} \ \bigcup
        \mathop{\cup}\limits_{\iaa < \ibb \in \Lambda}
                \phom{\iaa}{\ibb}(\Vman_{i} \cap \Ua{\iaa}{\ibb})
        \ \ \bigcup \
        \mathop{\cup}\limits_{\iaa < \ibb \in \Lambda}
                \phom{\iaa}{\ibb}^{-1}
                    (\Vman_{i} \cap \phom{\iaa}{\ibb}(\Ua{\iaa}{\ibb}))
\]
Then it follows from the definition of the relation $\sim$ that
\[
    p^{-1}(p(\Vman)) = \bigcup_{i\geq0} \Vman_{i}.
\]
Since $\Vman$ and $\Ua{\iaa}{\ibb}$ are open, and $\phom{\iaa}{\ibb}$ are open embeddings for all $\iaa<\ibb\in\Lambda$, the set $\Vman_i$ is open for all $i\geq0$.
Therefore $p^{-1}(p(\Vman))$ is also open in $\Eman$.
Now, since $p$ is a quotient map, it follows that $p(\Vman)$ is open in $\Aone$.
\end{proof}

\begin{subcorollary}\label{cor:glue_a_manifold}
Suppose that for each $\iaa\in\Lambda$ the restriction $\restr{p}{\Msp{\iaa}}\colon\Msp{\iaa}\to\Aone$ is injective, i.e.\ $\Msp{\iaa}$ consists of pairwise non-equivalent points.
Then this restriction is an open embedding.

In particular, if for some $n\geq1$ the space $\Msp{\iaa}$ is an $n$-dimensional manifold, then $\Aone$ is also an $n$-dimensional manifold.
\end{subcorollary}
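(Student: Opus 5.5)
The restriction $q:=\restr{p}{\Msp{\iaa}}\colon\Msp{\iaa}\to\Aone$ is continuous as the composition of the inclusion $\Msp{\iaa}\subset\Eman$ and the quotient map $p$. By assumption it is injective. To show that it is an open embedding, it therefore suffices to show that it is an open map: an injective, continuous, open map is a homeomorphism onto its image, and that image is open.

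So let $\Wman\subset\Msp{\iaa}$ be open. Since $\Eman$ carries the disjoint union topology, $\Wman$ is also open in $\Eman$: its intersection with $\Msp{\iaa}$ is $\Wman$, and its intersection with every other $\Msp{\ibb}$ is empty. By Lemma~\ref{lm:gluing_open_map}, $q(\Wman)=p(\Wman)$ is open in $\Aone$. Taking $\Wman=\Msp{\iaa}$ shows that $p(\Msp{\iaa})$ is open, and openness of $q$ makes $q^{-1}$ continuous on this image. Hence $q$ is an open embedding.

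For the second assertion, suppose each $\Msp{\iaa}$ is an $n$-dimensional manifold, understood in the paper's sense of a space locally homeomorphic to $[0;1)\times\bR^{n-1}$, so that Hausdorffness is not required. Every equivalence class meets some $\Msp{\iaa}$, so the sets $p(\Msp{\iaa})$ cover $\Aone$. By the first part, each $p(\Msp{\iaa})$ is an open subset of $\Aone$ homeomorphic to $\Msp{\iaa}$. Hence every point of $\Aone$ has an open neighbourhood homeomorphic to an open subset of an $n$-manifold, and so $\Aone$ is locally Euclidean (or locally half-Euclidean) of dimension $n$.

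Dimension is preserved because each chart on $\Aone$ comes from a chart on some $\Msp{\iaa}$. Boundary and interior points are compatible across the charts because the gluing maps are open embeddings, although the bare local-homeomorphism definition does not even require this.

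No step is a genuine obstacle. The only point needing care is that openness in $\Msp{\iaa}$ implies openness in $\Eman$, which is what allows Lemma~\ref{lm:gluing_open_map} to be applied.
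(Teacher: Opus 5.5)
Your proof is correct and is essentially the paper's argument. Since $E_a$ is open in $E$, Lemma~\ref{lm:gluing_open_map} makes the injective restriction $p|_{E_a}$ an open map, hence a homeomorphism onto the open set $p(E_a)$, and these open sets cover $M$. Your extra remarks about preservation of dimension and compatibility of boundary points are harmless but not needed.
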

\begin{proof}
Since $\Msp{\iaa}$ is open in $\Eman$, the restriction $\restr{p}{\Msp{\iaa}}$ is an open injective map, and hence a homeomorphism onto its image $\Wman_{\iaa}:=p(\Msp{\iaa})$, which is an open set in $\Aone$.

If now each $\Msp{\iaa}$ is an $n$-manifold, then $\Aone$ is the union of the open subsets $p(\Msp{\iaa})$, $\iaa\in\Lambda$, each of which is homeomorphic to an $n$-manifold, and therefore $\Aone$ is also an $n$-manifold.
\end{proof}

\subsection{``Shortcuts'' between points in one-dimensional manifolds}
The following Examples~\ref{exmp:one_portal} and~\ref{exmp:many_portals} are taken from \cite[Example~7.1]{Baillif:arxiv:2025}.
They generalize Example~\ref{exmp:1manif:X} and are needed in order to illustrate the independence of the requirements in condition~\ref{enum:th:1cw_struct:1cw}.

\begin{subexample}
\label{exmp:one_portal}
\rm
Let $\Msp{1}$ be a one-dimensional manifold and $x,y\in\Msp{1}$ two distinct points separable by open neighbourhoods.
We show how to add to $\Msp{1}$ a single point $z$ so that the resulting topological space $\Aone$ remains a one-dimensional manifold, but at the same time $z \in \hcl{x} \cap \hcl{y}$, i.e.\ in $\Aone$ the points $x$ and $y$ become chain inseparable.
Moreover, it turns out that this construction can be applied simultaneously even to an \term{uncountable} number of pairs of points of $\Aone$.

By assumption, there exist open embeddings $\alpha,\beta\colon (-1;1)\to\Msp{1}$ such that $x=\alpha(0)$, $y=\beta(0)$, and their images $\alpha\bigl((-1;1)\bigr)$ and $\beta\bigl((-1;1)\bigr)$ are disjoint.

Let $\Msp{0} = (-1;1)$, $\Ua{0}{1} = \Msp{0}\setminus\{0\}=(-1;0)\sqcup(0;1)$, and $\phom{0}{1}\colon\Ua{0}{1}\to\Msp{1}$ the open embedding defined by the formula
\[
    \phom{0}{1}(t) =
    \begin{cases}
        \alpha(t), & t\in(-1;0) \\
        \beta(t),  & t\in(0;1).
    \end{cases}
\]
and let $\Aone = \Msp{0} \cup_{\phom{0}{1}} \Msp{1}$ be the space obtained by gluing $\Msp{0}$ and $\Msp{1}$ by means of $\phom{0}{1}$.
Let further $\Eman = \Msp{0} \sqcup \Msp{1}$ and $p\colon\Eman\to\Aone$ the quotient map.
Note that each restriction $\restr{p}{\Msp{i}}\colon\Msp{i}\to\Aone$, $i=0,1$, is injective%
\footnote{In fact, $\restr{p}{\Msp{1}}$ is always injective, and for the injectivity of $\restr{p}{\Msp{0}}$ it suffices to require that $\alpha\bigl((-1;0)\bigr) \cap \beta\bigl((0;1)\bigr) = \varnothing$.}, and therefore, by Corollary~\ref{cor:glue_a_manifold}, $\Aone$ is a one-dimensional manifold.

Identify $\Msp{1}$ with its image in $\Aone$ and let $z = \phom{0}{1}(0)$.
Then $\Aone = \Msp{1} \sqcup \{z\}$.
Moreover, $z\in\hcl{x}\cap\hcl{y}$, and therefore the points $x,y$ are chain inseparable.
The point $z$ will be called a \term{shortcut between $x$ and $y$}.

Note that in Example~\ref{exmp:1manif:X} the point $b$ is such a shortcut between $a$ and $c$.
\end{subexample}

\newcommand\CLambda{\Omega}
\begin{subexample}[{\rm\cite[Example~7.1]{Baillif:arxiv:2025}}]
\label{exmp:many_portals}
\rm
\newcommand\asymbol{*}
More generally, let $\asymbol$ be some symbol not belonging to $\bR$,
\[
    \CLambda \subset \{ (x,y)\in\bR^2 \mid x,y\in\bR, x<y\}
\]
an arbitrary subset of the plane consisting of points $(x,y)$ with $x<y$, and $\Lambda = \CLambda \cup \{\asymbol\}$.

We introduce the following partial order on $\Lambda$:
for each pair $(x,y)\in\CLambda$ let $(x,y) < \asymbol$, and moreover $(x,y)$ is not comparable with any other pair $(x',y')\in\CLambda$.

Set $\Msp{\asymbol} = \bR$.
Now, for each pair $(x,y)\in\CLambda$, let
\begin{align*}
    \Msp{(x,y)} &:= (-1;1), &
    \Ua{(x,y)}{\asymbol} &:= (-1;1) \setminus \{0\} = (-1;0) \cup (0;1),
\end{align*}
and let $\phom{(x,y)}{\asymbol}\colon\Ua{(x,y)}{\asymbol}\to\bR$ be the open embedding defined by the formula:
\[
    \phom{(x,y)}{\asymbol}(t) =
    \begin{cases}
        x+t, & t\in(-1;0) \\
        y+t, & t\in(0;1).
    \end{cases}
\]
As in Subsection~\ref{sect:manif_general_construction}, let $\Eman = \Msp{\asymbol} \sqcup \mathop{\sqcup}\limits_{(x,y)\in\CLambda}\Msp{(x,y)}$ be the disjoint union of all these spaces, $\sim$ the corresponding equivalence relation on $\Eman$, $\Aone_{\CLambda} = \Eman/\!\sim$ the quotient space obtained by gluing $\Eman$ by means of the family $\{\phom{(x,y)}{\asymbol}\}_{(x,y)\in\CLambda}$, and $p\colon\Eman\to\Aone_{\CLambda}$ the quotient map.

Note that for each pair $(x,y)\in\CLambda$ the embedding $\phom{(x,y)}{\asymbol}$ is \term{monotonically increasing}.
It follows that the points of $\Msp{(x,y)}$ are pairwise non-equivalent, i.e.\ $\restr{p}{\Msp{(x,y)}}$ is injective, and therefore, by Corollary~\ref{cor:glue_a_manifold}, $\Aone_{\CLambda}$ is a one-dimensional manifold.

By construction, $\phom{(x,y)}{\asymbol}$ identifies $\Ua{(x,y)}{\asymbol}=(-1;1)\setminus\{0\}$ with an open subset of $\bR = \Msp{\asymbol}$, and the point $z_{x,y} = \restr{p}{\Msp{(x,y)}}(0)$ is a ``shortcut'' between $x$ and $y$ in $\Aone_{\CLambda}$.
In particular, each pair $(x,y)\in\CLambda$ adds to $\bR$ in $\Aone_{\CLambda}$ a single point $z_{x,y}$, and we have a canonical bijection  $\bR\sqcup\CLambda \equiv \Aone_{\CLambda}$:
\begin{align}\label{equ:R_Omega__M_Omega}
    &\bR   \ni t     \longmapsto p(t)    \in \Aone_{\CLambda}, &
    &\CLambda\ni (x,y) \longmapsto z_{x,y} \in \Aone_{\CLambda}.
\end{align}
In other words, we obtain the structure of a one-dimensional manifold on $\bR\sqcup\CLambda$ by adding shortcuts between the points $x,y$ for all $(x,y)\in\CLambda$.
In particular, each such pair of points $x$ and $y$ is chain inseparable.
\end{subexample}

As a corollary, we obtain the following lemma.
\begin{sublemma}\label{lm:one_chain_nonsep_class}
Let $\Qman\subset\bR$ be an arbitrary subset containing more than one point.
Then there exist a one-dimensional manifold $\Aone$ and an open embedding $\phi\colon\bR\to\Aone$ such that $\phi(Q) \cup \bigl(\Aone\setminus\phi(\bR)\bigr)$ forms a single chain inseparability class of $\Aone$.
\end{sublemma}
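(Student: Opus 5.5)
Apply Example~\ref{exmp:many_portals} with a suitable choice of $\CLambda$, take $\Aone=\Aone_{\CLambda}$, and let $\phi=\restr{p}{\Msp{\asymbol}}\colon\bR\to\Aone_{\CLambda}$. By Corollary~\ref{cor:glue_a_manifold}, $\phi$ is an open embedding, because $\restr{p}{\Msp{\asymbol}}$ is injective (no identifications occur inside $\Msp{\asymbol}=\bR$). Under the bijection \eqref{equ:R_Omega__M_Omega}, the complement $\Aone\setminus\phi(\bR)$ is exactly the set of shortcuts $\{z_{x,y}\mid (x,y)\in\CLambda\}$. I would take
\[
    \CLambda=\{(x,y)\in\Qman\times\Qman \mid x<y\},
\]
which is nonempty since $\Qman$ has more than one point.

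\emph{Step 1: $\phi(\Qman)\cup\{z_{x,y}\}$ lies in one class.} For $(x,y)\in\CLambda$, every neighbourhood of $z_{x,y}$ contains $p\bigl((-\eps;0)\bigr)=\phi\bigl((x-\eps;x)\bigr)$, which accumulates at $\phi(x)$. Hence $z_{x,y}$ is inseparable from $\phi(x)$, and similarly from $\phi(y)$. So any two points of $\Qman$ are chain inseparable through one shortcut, and every shortcut is chain inseparable with its endpoints. The whole set $\phi(\Qman)\cup\{z_{x,y}\}$ is therefore contained in a single class.

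\emph{Step 2: the class is no larger.} This is the main obstacle. I would determine exactly which pairs of points of $\Aone$ are inseparable.
\begin{enumerate}[label={\rm(\roman*)}]
\item Two points $\phi(s),\phi(t)$ with $s\neq t$ are separable, since $\phi(\bR)$ is an open Hausdorff subset.
\item A point $\phi(t)$ with $t\notin\{x,y\}$ is separable from $z_{x,y}$. Take a small neighbourhood $p\bigl((-\eps;\eps)\bigr)$ of $z_{x,y}$; it equals $\phi\bigl((x-\eps;x)\cup(y;y+\eps)\bigr)\cup\{z_{x,y}\}$, which is disjoint from a small interval around $\phi(t)$.
\item Two distinct shortcuts $z_{x,y}$ and $z_{x',y'}$ are separable. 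Their small neighbourhoods meet only in $\phi(\bR)$, namely in the sets $\phi\bigl((x-\eps;x)\cup(y;y+\eps)\bigr)$ and $\phi\bigl((x'-\eps;x')\cup(y';y'+\eps)\bigr)$. These are disjoint for small $\eps$ unless $x=x'$ or $y=y'$. In that case the one-sided intervals coincide, for example $x=x'$ gives $(x-\eps;x)$ in both, so such shortcuts are in fact inseparable. This is harmless, because both already lie in the class.
\end{enumerate}
The careful bookkeeping is the check that these small neighbourhoods really have the stated form. This follows from the description $p^{-1}(p(\Vman))=\bigcup_i \Vman_i$ in the proof of Lemma~\ref{lm:gluing_open_map}, since all gluings pass through $\Msp{\asymbol}$.

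From (i)–(iii), every inseparable pair with distinct members is either $\{\phi(x),z_{x,y}\}$, or $\{\phi(y),z_{x,y}\}$, or a pair of shortcuts sharing an endpoint, all with $x,y\in\Qman$. Hence a chain starting in $\phi(\Qman)\cup\{z_{x,y}\}$ never leaves it, and this set is exactly one chain inseparability class.
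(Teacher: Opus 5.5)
Your proof is correct and takes essentially the paper's route: build $\Aone=\Aone_{\CLambda}$ from Example~\ref{exmp:many_portals} by adding shortcuts between points of $\Qman$. The paper differs only in adding the ``star'' of shortcuts from one fixed $x\in\Qman$ rather than all pairs (implicitly reordering each pair so the smaller coordinate comes first, which your choice $x<y$ handles automatically), and it merely asserts the resulting class, whereas your Step~2 actually checks which pairs of points are inseparable.
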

\begin{proof}
Let $x\in\Qman$ be an arbitrary point, and let $\Aone = \bR\sqcup(\Qman\setminus\{x\})$ be the one-dimensional manifold obtained by adding to $\bR$ a ``shortcut'' $z_{x,y}$ between $x$ and each of the points $y\in\Qman\setminus\{x\}$.
Then the chain inseparability class $\chcl{x}$ of the point $x$ is $\Qman \cup (\Aone\setminus\bR)$.
On the other hand, all other points $w \in \bR\setminus\Qman \subset\Aone$ are Hausdorff.
\end{proof}
\begin{subexample}\label{exmp:Abr_open}\rm
Let $\Qman = \bR$ and $\Aone := \Aone_{\CLambda}$ the manifold from Lemma~\ref{lm:one_chain_nonsep_class}.
Then all its points are pairwise chain inseparable.
Therefore
\begin{itemize}[leftmargin=*]
\item $\Abr = \Aone$, and hence this set \term{is not nowhere dense},
\item on the other hand, the quotient space $\Xone = \Aone/\cbr$ is a point, and hence has the structure of a topological graph.
\end{itemize}
Thus the first requirement in condition~\ref{enum:th:1cw_struct:1cw} is violated, while the second one is satisfied.
\end{subexample}

\begin{subexample}\label{exmp:Xone_segment}\rm
Let $\Nman = (-1;2)$ and let $\Aone$ be the one-dimensional manifold obtained by adding to $\Nman$ shortcuts
\begin{itemize}
\item
between the point $0$ and each point $t\in(-1;0)$;
\item
and also between the point $1$ and each point $t\in(1;2)$.
\end{itemize}
Then $\Abr$ has exactly two chain inseparability classes, of the points $0$ and $1$, while all other points $t\in(0;1)$ are Hausdorff.
In particular, $\Xone$ can be identified with $[0;1]$ so that $0 \in [0;1]$ corresponds to the chain inseparability class of the point $0\in\Aone$, and $1 \in [0;1]$ to the chain inseparability class of the point $1\in\Aone$.
Thus the cell decomposition $\Xone = [0;1]$, which has exactly two vertices $0$ and $1$, is compatible with the inseparability relation $\cbr$.

Hence, as in Example~\ref{exmp:Abr_open}, every non-trivial chain inseparability class has non-empty interior, i.e.\ the first requirement of condition~\ref{enum:th:1cw_struct:1cw} is violated, but the quotient space $\Xone$ has a structure of a topological graph compatible with $\cbr$.
\end{subexample}

\subsection{Structure of the paper}
Section~\ref{sect:preliminaries} contains definitions and elementary properties of quotient maps, locally finite families of sets, branch points and chain inseparability of spaces, one-dimensional manifolds, and one-dimensional CW complexes.
In Section~\ref{sect:open_embeddings_01} we prove several general statements about embeddings of open intervals into topological spaces and the possibility of extending such embeddings to the ends of these intervals.
Section~\ref{sect:proof:th:1cw_struct} contains the proof of Theorem~\ref{th:1cw_struct}.

\section{Preliminaries}\label{sect:preliminaries}

\subsection{Quotient maps}
Recall that a map $\func\colon\Xman\to\Yman$ between topological spaces is called a \term{quotient map}%
\footnote{In this definition we do not require $\func$ to be surjective---in most of the statements of this paper, in particular in Lemma~\ref{lm:qmaps}, surjectivity of a quotient map is not needed. In the cases where it is needed, we require it explicitly.}, if it has the following property: a subset $\Bman\subset\Yman$ is open if and only if its preimage $\func^{-1}(\Bman)$ is open.
It follows from the definition that every quotient map is continuous.
The following well-known and elementary lemma contains some properties of quotient maps that will be used in this paper.

\begin{sublemma}\label{lm:qmaps}
Let $\func\colon\Xman\to\Yman$ be a quotient map between topological spaces.
Then the following statements hold.
\begin{enumerate}[label={\rm(\alph*)}, leftmargin=*]
\item\label{enum:lm:qmaps:cont_compos}
Let $\xi\colon\Yman\to\Qman$ be a map into an arbitrary topological space.
Then $\xi$ is continuous if and only if the composition $\xi\circ\func\colon\Xman\to\Qman$ is continuous.
\item\label{enum:lm:qmaps:restr_to_inv_img}
Let $\Bman\subset\Yman$ be an arbitrary subset whose preimage $\func^{-1}(\Bman)$ is non-empty and open or closed in $\Xman$.
Then the restriction $\restr{\func}{\func^{-1}(\Bman)}\colon \func^{-1}(\Bman) \to \Bman$ is also a quotient map.
In particular, if $\restr{\func}{\func^{-1}(\Bman)}$ is a bijection, then it is a homeomorphism.
\end{enumerate}
\end{sublemma}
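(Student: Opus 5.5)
Both parts are direct consequences of the definition of a quotient map, so I will simply unwind that definition in each case.

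For \ref{enum:lm:qmaps:cont_compos}: if $\xi$ is continuous, then $\xi\circ\func$ is a composition of continuous maps, because every quotient map is continuous. For the converse, take an open $\Wman\subset\Qman$. Then $\func^{-1}(\xi^{-1}(\Wman)) = (\xi\circ\func)^{-1}(\Wman)$ is open in $\Xman$. Since $\func$ is a quotient map, this forces $\xi^{-1}(\Wman)$ to be open in $\Yman$, so $\xi$ is continuous. Surjectivity of $\func$ is never used.

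For \ref{enum:lm:qmaps:restr_to_inv_img}, write $\Aman=\func^{-1}(\Bman)$ and $g=\restr{\func}{\Aman}\colon\Aman\to\Bman$. Continuity of $g$ is clear. It remains to show that if $\Cman\subset\Bman$ has $g^{-1}(\Cman)=\func^{-1}(\Cman)$ relatively open in $\Aman$, then $\Cman$ is relatively open in $\Bman$.

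Suppose first that $\Aman$ is open in $\Xman$. Then $\func^{-1}(\Cman)$ is open in $\Xman$, but $\Cman$ itself need not be open in $\Yman$ when $\Bman$ is not open. The fix is to enlarge $\Cman$ to
\[
    \Cman' = \Cman\cup(\Yman\setminus\Bman).
\]
Its preimage is $\func^{-1}(\Cman)\cup(\Xman\setminus\Aman)$, and this is where I expect the main obstacle: $\Xman\setminus\Aman$ is only closed, so this union need not be open. The enlargement therefore fails in the open case, and the trick has to be applied in the closed case instead.

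Suppose next that $\Aman$ is closed, and work with closed sets: let $\Cman\subset\Bman$ have $\func^{-1}(\Cman)$ relatively closed in $\Aman$. Then $\func^{-1}(\Cman)$ is closed in $\Xman$. Again $\Cman$ need not be closed in $\Yman$, and $\func^{-1}(\Yman\setminus\Bman)=\Xman\setminus\Aman$ is open, so an analogous enlargement also does not help.

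So neither enlargement closes the argument, and in general the hereditary property fails when $\Bman$ is not itself open or closed. This is a standard phenomenon: restrictions of quotient maps to saturated open or closed sets are quotient only onto open or closed $\Bman$. I would therefore read the intended statement in that setting. If $\func$ is surjective and $\func^{-1}(\Bman)$ is open (respectively closed), then $\Bman$ is open (respectively closed), because $\func$ is a quotient map. In the applications I expect $\func=\apr$, and where $\apr$ is open one can also use that $\Bman=\func(\Aman)$.

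In this setting the proof is routine. In the open case, if $\func^{-1}(\Cman)$ is open in $\Aman$, it is open in $\Xman$, hence $\Cman$ is open in $\Yman$ and so in $\Bman$. The closed case is identical with closed sets.

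The final assertion follows at once: a bijective quotient map is continuous, and it is open because for open $\Uman\subset\Aman$ we have $g^{-1}(g(\Uman))=\Uman$, so $g(\Uman)$ is open. Hence it is a homeomorphism.

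The only delicate point is the non-surjective, non-open $\Bman$ case. There I would add the hypothesis that $\Bman$ is open or closed, or handle it via openness of $\func$ when that is available.
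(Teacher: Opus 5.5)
Your final ``routine'' paragraph is correct and is the standard argument; the paper gives no proof, calling the lemma well known. That paragraph already proves part (b) exactly as stated. The problem is the analysis around it, which rests on a false claim and leads you to think the statement needs an extra hypothesis.

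In the open case you assert that $C$ need not be open in $Y$ when $B$ is not open. But the paper's definition of a quotient map applies to \emph{every} subset of $Y$ and does not assume surjectivity. Since $f^{-1}(C)$ is open in $A$ and $A$ is open in $X$, the set $f^{-1}(C)$ is open in $X$. The definition then forces $C$ to be open in $Y$, hence in $B$. Dually, in the closed case $f^{-1}(Y\setminus C)=X\setminus f^{-1}(C)$ is open in $X$, so $C$ is closed in $Y$. Neither step uses anything about $B$ or about surjectivity.

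So the enlargement $C\cup(Y\setminus B)$ and the ``delicate non-surjective, non-open $B$'' case are unnecessary, as is your proposed extra hypothesis. That case cannot occur under the lemma's hypotheses. Your remark that surjectivity is needed to conclude that $B$ is open (resp.\ closed) is also mistaken. Apply the definition to $B$, or to $Y\setminus B$ using $f^{-1}(Y\setminus B)=X\setminus A$, and you get this directly for any quotient map in the paper's sense.

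Part (a) and your argument that a bijective quotient map is open are fine as written. The fix is simply to delete the hedging and the proposed weakening. Your routine argument, applied to an arbitrary $B$ with $f^{-1}(B)$ open or closed, gives the lemma in exactly the stated generality.
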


\subsection{Locally finite families of sets}
Let $\Aone$ be a topological space, $\Aman\subset\Aone$ a subset, and $x\in\Aman$ a point.
An open neighbourhood $\Uman$ of $x$ is called \term{isolating with respect to $\Aman$} if $\Uman\cap\Aman=\{x\}$.
Clearly, $x$ is isolated in $\Aman$ if and only if it has an isolating neighbourhood with respect to $\Aone$.

Recall also that a subset $\Aman\subset\Aone$ is called
\begin{itemize}[leftmargin=*]
\item \term{locally finite} if the family of all singletons from $\Aman$ is locally finite, i.e.\ each point $x\in\Aone$ has a neighbourhood whose intersection with $\Aman$ is finite;
\item \term{discrete} if it is discrete in the induced topology, i.e.\ each point $a\in \Aman$ has a neighbourhood $\Uman_{a}$ containing no other points of $\Aman$;
\item \term{strongly discrete} if for each point $a\in \Aman$ one can choose a neighbourhood $\Uman_{a}$ such that $\Uman_{a} \cap\Uman_{b} = \varnothing$ for $a\ne b \in \Aman$.
\end{itemize}

Recall also that a family of subsets $\beta=\{\Bman_{i}\}_{i\in I}$ of a topological space $\Aone$ is called
\begin{itemize}[leftmargin=*]
\item \term{locally finite} if each point $x\in\Aone$ has a neighbourhood that intersects only finitely many elements of $\beta$;
\item \term{discrete} if each $\Bman_i$ is clopen in the induced topology of $\Bman$, i.e.\ each $\Bman_i$ has an open neighbourhood that does not intersect the other elements of $\beta$;
\item \term{strongly discrete} if for each $i\in I$ there exists an open neighbourhood $\Uman_i$ of $\Bman_i$ such that $\Uman_i\cap\Uman_j=\varnothing$ for distinct $i,j\in I$.
\end{itemize}
Let $\Bman\in\beta$.
An open neighbourhood $\Uman$ of $\Bman$ is called \term{isolating with respect to $\beta$} if $\Uman$ does not intersect the other elements of $\beta$.
Clearly, the family $\beta$ is discrete if and only if each of its elements has an isolating neighbourhood with respect to $\beta$.

Let us note the following elementary statements.
\begin{sublemma}[{\rm e.g.~\cite[Lemma~2.3.1]{MaksymenkoPolulyakh:PIGC:2021}}]
\label{lm:T1_loc_fin__closed_discr}
Let $\Aone$ be a $T_1$ space.
Then a subset $\Aman\subset\Aone$ is locally finite if and only if $\Aman$ is closed and discrete.
\end{sublemma}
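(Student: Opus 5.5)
Both directions go straight back to the definitions. In a $T_1$ space every singleton and every finite set is closed, and we will use this repeatedly.

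\emph{Locally finite implies closed.} Suppose $\Aman$ is locally finite, and let $x\in\Aone\setminus\Aman$.
\begin{itemize}
\item Choose a neighbourhood $\Uman$ of $x$ with $\Uman\cap\Aman=F$ finite.
\item Since $x\notin F$ and $F$ is closed, the set $\Uman\setminus F$ is a neighbourhood of $x$ disjoint from $\Aman$.
\end{itemize}
Hence $\Aone\setminus\Aman$ is open, so $\Aman$ is closed.

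\emph{Locally finite implies discrete.} Let $a\in\Aman$ and take a neighbourhood $\Uman$ with $\Uman\cap\Aman=F$ finite. Then $\Uman\setminus(F\setminus\{a\})$ is open, because $F\setminus\{a\}$ is finite and hence closed. It contains $a$ and meets $\Aman$ only in $\{a\}$, i.e.\ it is an isolating neighbourhood of $a$.

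\emph{Closed and discrete implies locally finite.} Assume $\Aman$ is closed and discrete, and let $x\in\Aone$.
\begin{itemize}
\item If $x\notin\Aman$, then $\Aone\setminus\Aman$ is a neighbourhood of $x$ meeting $\Aman$ in the empty set.
\item If $x\in\Aman$, discreteness gives a neighbourhood $\Uman$ with $\Uman\cap\Aman=\{x\}$, which is finite.
\end{itemize}
So every point has a neighbourhood meeting $\Aman$ in a finite set.

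There is no real obstacle here. The only point needing care is that the $T_1$ hypothesis is used to make the finite sets $F$ and $F\setminus\{a\}$ closed, so that removing them from $\Uman$ leaves an open set. Without $T_1$ the implication from locally finite to closed can fail. For example, in an indiscrete two-point space a singleton is locally finite but not closed.
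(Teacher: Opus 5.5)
Your proof is correct and complete. Each implication is checked directly from the definitions. The $T_1$ hypothesis is used exactly where it is needed, namely to make $F$ and $F\setminus\{a\}$ closed, and it is needed only for the direction from locally finite to closed and discrete; your indiscrete two-point example correctly shows it cannot be dropped. The paper gives no argument of its own for this lemma and only cites Lemma~2.3.1 of Maksymenko--Polulyakh, so there is no proof to compare against; your direct verification is the standard proof of this fact.
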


\begin{sublemma}\label{lm:open_disj_family}
Let $\{\Acomp[\lambda]\}_{\lambda\in\Lambda}$ be a family of open subsets of a topological space $\Aone$ such that $\Acomp[\lambda]\cap\Acomp[\mu]=\varnothing$ for $\lambda\ne\mu\in\Lambda$.
Denote $\Uman = \mathop{\cup}\limits_{\lambda\in\Lambda}\Acomp[\lambda]$.
Then $\Cl{\Acomp[\lambda]}\setminus\Acomp[\lambda] \subset \Aone\setminus\Uman$ for all $\lambda\in\Lambda$.
\end{sublemma}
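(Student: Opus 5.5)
We have to show that $\Cl{\Acomp[\lambda]}\setminus\Acomp[\lambda]$ misses every $\Acomp[\mu]$, including $\mu=\lambda$. Since $\Uman$ is the union of the $\Acomp[\mu]$, this is exactly the claim.

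Fix $\lambda\in\Lambda$ and a point $x\in\Cl{\Acomp[\lambda]}\setminus\Acomp[\lambda]$, and argue by contradiction: assume $x\in\Uman$. Then $x\in\Acomp[\mu]$ for some $\mu\in\Lambda$.

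The case $\mu=\lambda$ is impossible at once, because $x\notin\Acomp[\lambda]$.

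So $\mu\neq\lambda$. In this case $\Acomp[\mu]$ is an open neighbourhood of $x$. Since $x$ lies in the closure of $\Acomp[\lambda]$, every neighbourhood of $x$ meets $\Acomp[\lambda]$, so $\Acomp[\mu]\cap\Acomp[\lambda]\neq\varnothing$. This contradicts the assumption that the family is pairwise disjoint.

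Both cases are impossible, so $x\notin\Uman$, which gives $\Cl{\Acomp[\lambda]}\setminus\Acomp[\lambda]\subset\Aone\setminus\Uman$.

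There is no genuine obstacle here. The only points needing care are that the case $\mu=\lambda$ must be treated separately, and that openness of $\Acomp[\mu]$ is exactly what lets us use the closure characterization. No earlier results of the paper are needed.
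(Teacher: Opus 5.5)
Your proof is correct and is essentially the paper's argument. A point of $\overline{J_\lambda}\setminus J_\lambda$ lying in some $J_\mu$ would force the open set $J_\mu$ to meet $J_\lambda$, contradicting disjointness. Your separate treatment of the case $\mu=\lambda$ simply makes explicit a step the paper leaves implicit.
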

\begin{proof}
Suppose that $\Cl{\Acomp[\lambda]}\setminus\Acomp[\lambda]$ intersects $\Uman$ and let $x\in(\Cl{\Acomp[\lambda]}\setminus\Acomp[\lambda]) \cap \Uman$ be any point.
Then $x \in \Acomp[\mu]$ for some $\mu\in\Lambda$.
But then the open set $\Acomp[\mu]$ intersects the closure $\Cl{\Acomp[\lambda]}$, hence it must also intersect the set $\Acomp[\lambda]$ itself, which contradicts the assumption.
\end{proof}

\begin{sublemma}\label{lm:strongly_discr_family}
Let $\Aone$ be a topological space and $\{\Bman_{i}\}_{i\in I}$ a discrete family of sets whose union $\Bman = \mathop{\cup}\limits_{i\in I} \Bman_i$ is closed (open).
Then each set $\Bman_i$ is also closed (open).
\end{sublemma}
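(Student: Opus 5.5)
Fix $i\in I$. By the definition of a discrete family, I will choose an open neighbourhood $\Uman_i$ of $\Bman_i$ that is isolating with respect to $\{\Bman_j\}_{j\in I}$, i.e.\ $\Uman_i\cap\Bman_j=\varnothing$ for all $j\ne i$. Then I will use the identity
\[
    \Uman_i\cap\Bman = \Uman_i\cap\bigl(\mathop{\cup}\limits_{j\in I}\Bman_j\bigr) = \Uman_i\cap\Bman_i = \Bman_i,
\]
where the last equality holds because $\Bman_i\subset\Uman_i$. The rest of the argument follows from this identity.

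\emph{Open case.} If $\Bman$ is open, then $\Bman_i=\Uman_i\cap\Bman$ is an intersection of two open sets, hence open.

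\emph{Closed case.} If $\Bman$ is closed, I will show that $\Cl{\Bman_i}\subset\Bman_i$. Let $x\in\Cl{\Bman_i}$. Since $\Bman_i\subset\Bman$ and $\Bman$ is closed, we get $x\in\Bman$, so $x\in\Bman_j$ for some $j$. Suppose $j\ne i$. The set $\Uman_j$ is an open neighbourhood of $x$, so it must meet $\Bman_i$, because $x\in\Cl{\Bman_i}$. But $\Uman_j$ is isolating for $\Bman_j$, so $\Uman_j\cap\Bman_i=\varnothing$. This contradiction forces $j=i$, hence $x\in\Bman_i$ and $\Bman_i$ is closed. Equivalently, $\Bman_i = \Bman\setminus\mathop{\cup}\limits_{j\ne i}\Uman_j$, which is a closed set minus an open set, hence closed.

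There is no real obstacle here. The only point needing care is that the closed case uses isolating neighbourhoods of the \emph{other} sets $\Bman_j$, not just of $\Bman_i$ itself.
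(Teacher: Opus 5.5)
Your proof is correct and takes the same route as the paper's. The paper notes that each $B_i$ is clopen in $B$, since that is how it defines a discrete family, and that a clopen subset of a closed (open) set is closed (open). You simply verify the two halves of this relative clopenness explicitly, via $B_i = U_i\cap B$ and $B_i = B\setminus\bigcup_{j\ne i}U_j$.
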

\begin{proof}
By hypothesis, $\Bman_i$, $i\in I$, is clopen in $\Bman$.
Therefore, if $\Bman$ is closed (open) in $\Aone$, then its clopen subset $\Bman_i$ will also be closed (open) in $\Aone$.
\end{proof}

\begin{sublemma}\label{lm:discr_family_of_discr_sets}
Let $\Aone$ be a topological space and $\{\Bman_{i}\}_{i\in I}$ a discrete family of discrete sets.
Then their union $\Bman = \mathop{\cup}\limits_{i\in I} \Bman_i$ is a discrete set.
\qed
\end{sublemma}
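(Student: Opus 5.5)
The statement to prove is Lemma~\ref{lm:discr_family_of_discr_sets}: if $\{\Bman_i\}_{i\in I}$ is a discrete family of discrete sets, then the union $\Bman=\mathop{\cup}\limits_{i\in I}\Bman_i$ is discrete. I will verify directly that every point of $\Bman$ has a neighbourhood meeting $\Bman$ only in that point. The argument combines two neighbourhoods: one isolating the member $\Bman_i$ from the rest of the family, and one isolating the point inside $\Bman_i$.

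Fix $x\in\Bman$, and choose $i\in I$ with $x\in\Bman_i$. Since the family is discrete, $\Bman_i$ has an isolating neighbourhood with respect to the family. That is, there is an open set $\Uman\supset\Bman_i$ with $\Uman\cap\Bman_j=\varnothing$ for all $j\ne i$. Since $\Bman_i$ is a discrete set, there is an open neighbourhood $\Vman$ of $x$ with $\Vman\cap\Bman_i=\{x\}$.

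Put $\Wman=\Uman\cap\Vman$. This is an open neighbourhood of $x$, because $x\in\Bman_i\subset\Uman$. We then compute
\[
    \Wman\cap\Bman
    = \bigl(\Wman\cap\Bman_i\bigr)\cup\mathop{\cup}\limits_{j\ne i}\bigl(\Wman\cap\Bman_j\bigr)
    \subset \bigl(\Vman\cap\Bman_i\bigr)\cup\mathop{\cup}\limits_{j\ne i}\bigl(\Uman\cap\Bman_j\bigr)
    = \{x\}.
\]
Hence $\Wman$ isolates $x$ in $\Bman$, and $\Bman$ is discrete.

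The proof has no real obstacle. The only point needing care is the choice of $i$: the sets $\Bman_j$ are pairwise disjoint whenever the family is discrete and the elements are distinct. But the argument above never uses disjointness. For any chosen $i$ with $x\in\Bman_i$, the set $\Uman$ misses every other $\Bman_j$, so the same estimate goes through.
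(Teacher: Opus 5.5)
Your proof is correct. The paper gives no proof of this lemma and treats it as evident, and your argument is the natural one it has in mind: intersect an isolating neighbourhood $U$ of $B_i$ with respect to the family with a neighbourhood $V$ isolating $x$ in $B_i$.
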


\begin{sublemma}\label{lm:locfin_dicr_familiies_inverse}
Let $\epr\colon\Aone\to\Xone$ be a continuous map and $\gamma$ a locally finite (discrete, strongly discrete) family of subsets of $\Xone$.
Then its preimage $\beta = \{ \epr^{-1}(\Cman) \mid \Cman\in\gamma\}$ is a locally finite (discrete, strongly discrete) family of subsets of $\Aone$.
\qed
\end{sublemma}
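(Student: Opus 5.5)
The plan is to check each of the three properties in turn by pulling back the relevant neighbourhoods along $\epr$. Throughout, write $\beta = \{ \epr^{-1}(\Cman) \mid \Cman\in\gamma\}$.

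\emph{Locally finite case.} Take a point $x\in\Aone$. Since $\gamma$ is locally finite, the point $\epr(x)$ has a neighbourhood $\Wman$ that meets only finitely many members $\Cman_1,\ldots,\Cman_k$ of $\gamma$. By continuity, $\epr^{-1}(\Wman)$ is an open neighbourhood of $x$. If it intersects some $\epr^{-1}(\Cman)$ at a point $y$, then $\epr(y)\in\Wman\cap\Cman$, so $\Cman$ is one of the $\Cman_i$. Hence $\epr^{-1}(\Wman)$ meets only finitely many members of $\beta$.

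\emph{Discrete case.} I will use the characterisation by isolating neighbourhoods given just before the statement. For $\Cman\in\gamma$ choose an open $\Uman_{\Cman}\supset\Cman$ that misses every other member of $\gamma$. Then $\epr^{-1}(\Uman_{\Cman})$ is open, contains $\epr^{-1}(\Cman)$, and misses $\epr^{-1}(\Cman')$ for every $\Cman'\ne\Cman$. The reason is that a common point $y$ would give $\epr(y)\in\Uman_{\Cman}\cap\Cman'$, which is empty.

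\emph{Strongly discrete case.} Choose pairwise disjoint open sets $\Uman_{\Cman}\supset\Cman$. Their preimages $\epr^{-1}(\Uman_{\Cman})$ are open, contain $\epr^{-1}(\Cman)$, and are pairwise disjoint, because preimages preserve disjointness.

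\emph{Indexing subtlety.} Two distinct members $\Cman\ne\Cman'$ of $\gamma$ may have the same preimage. This happens, for example, when both preimages are empty, or when $\Cman$ and $\Cman'$ differ only outside the image of $\epr$. If this occurs, I treat $\beta$ as indexed by $\gamma$, i.e.\ as a family with possible repetitions. This is the reading the later applications need.

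For the two discreteness properties, repetition only causes trouble for nonempty repeated members. In that case $\Cman\cap\epr(\Aone)=\Cman'\cap\epr(\Aone)$ is nonempty, so $\Cman\cap\Cman'\ne\varnothing$. This is impossible in a discrete family, since the isolating neighbourhood of $\Cman$ would then meet $\Cman'$. Empty members cause no problem for any of the properties.

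The only step that needs any care is this indexing subtlety. All three verifications themselves are immediate from continuity of $\epr$ and from preimages respecting intersections.
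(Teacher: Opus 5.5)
Your argument is correct and is exactly the routine pullback verification the paper leaves to the reader; the lemma is stated there without proof. The indexing paragraph is harmless but unnecessary. The identity $\epr^{-1}(\Uman_{\Cman})\cap\epr^{-1}(\Cman')=\epr^{-1}(\Uman_{\Cman}\cap\Cman')=\varnothing$ already holds for every index $\Cman'\ne\Cman$, so repetitions among the preimages are handled automatically.
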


\subsection{Branch points and chain inseparability classes}\label{sect:branch_points}
Let $(\Aone,\tau)$ be a topological space.
For each point $x\in\Aone$ we define its \term{Hausdorff closure}:
\[
    \hcl{x} := \mathop{\cap}\limits_{x\in\Vman\in\tau}\Cl{\Vman}
\]
as the intersection of the closures of all its neighbourhoods.
By definition, $x\in\hcl{x}$, but, in general, $\hcl{x}$ may contain points different from $x$.

More generally, for a subset $\Aman\subset\Aone$ and a point $x\in\Aman$, denote by $\hcl[\Aman]{x}$ its Hausdorff closure in $\Aman$ with the induced topology.

The following simple lemma holds, and we leave its proof to the reader.
\begin{sublemma}\label{lm:hcl_props}
Let $x,y\in\Aone$ be two points.
Then the following conditions are equivalent:
\begin{enumerate}[label={\rm(\alph*)}]
\item\label{enum:lm:hcl_props:y_in_hclx} $y \in \hcl{x}$;
\item\label{enum:lm:hcl_props:non-sep} $x$ and $y$ are \term{inseparable}, i.e.\ any of their neighbourhoods intersect;
\item\label{enum:lm:hcl_props:x_in_hcly} $x \in \hcl{y}$.
\end{enumerate}
Moreover, for an arbitrary subset $\Aman\subset\Aone$ and a point $x\in\Aman$,
\begin{equation}\label{equ:hclA_x}
    \hcl[\Aman]{x} \subset \Aman \cap \hcl{x}.
\end{equation}
\end{sublemma}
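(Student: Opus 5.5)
The plan is to unwind the definition of $\hcl{x}$ as the intersection of closures of neighbourhoods, and to use the standard description of closure: a point lies in $\Cl{\Vman}$ if and only if each of its neighbourhoods meets $\Vman$.

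For the equivalence \ref{enum:lm:hcl_props:y_in_hclx}$\Leftrightarrow$\ref{enum:lm:hcl_props:non-sep}, note that $y\in\hcl{x}$ means $y\in\Cl{\Vman}$ for every open $\Vman\ni x$. This holds exactly when every open $\Wman\ni y$ meets every open $\Vman\ni x$, which is the definition of $x$ and $y$ being inseparable. It suffices to work with open neighbourhoods, since every neighbourhood contains an open one. Condition~\ref{enum:lm:hcl_props:non-sep} is symmetric in $x$ and $y$, so applying the same argument with the roles swapped gives \ref{enum:lm:hcl_props:non-sep}$\Leftrightarrow$\ref{enum:lm:hcl_props:x_in_hcly}.

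For inclusion~\eqref{equ:hclA_x}, take $y\in\hcl[\Aman]{x}$. Then $y\in\Aman$ by definition. By the equivalence just proved, applied inside the subspace $\Aman$, the points $x$ and $y$ are inseparable in $\Aman$. Now let $\Vman\ni x$ and $\Wman\ni y$ be open in $\Aone$. Then $\Vman\cap\Aman$ and $\Wman\cap\Aman$ are open neighbourhoods of $x$ and $y$ in $\Aman$, so they intersect. Hence $\Vman\cap\Wman\neq\varnothing$, so $x$ and $y$ are inseparable in $\Aone$, and therefore $y\in\hcl{x}$.

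There is no real obstacle here. The only point needing care is in the subspace step: open sets of $\Aone$ restrict to open sets of $\Aman$, which is the direction used, whereas the converse inclusion can fail.
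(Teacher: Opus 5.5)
Your proof is correct. For (a)$\Leftrightarrow$(b) you unwind the Hausdorff closure using the fact that a point lies in the closure of a set exactly when each of its neighbourhoods meets that set, and (c) follows from the symmetry of (b). For the inclusion you restrict open sets of $M$ to the subspace $A$. The paper gives no proof of this lemma and leaves it to the reader, and your argument is the routine verification it has in mind.
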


\begin{subdefinition}
A point $x\in\Aone$ is called a \term{Hausdorff} point of $\Aone$ if $\hcl{x} = \{x\}$, and a \term{branch point} of $\Aone$ if $\hcl{x} \setminus \{x\} \ne \varnothing$, i.e.\ in $\Aone\setminus\{x\}$ there exist points inseparable from $x$.
Denote by
\begin{align}
    \Ahaus &= \{ x\in\Aone \mid \hcl{x}=\{x\}\}, &
    \Abr   &= \Aone\setminus\Ahaus
\end{align}
the sets of all \trm{Hausdorff} points and all branch points of $\Aone$, respectively.
\end{subdefinition}

Thus, by definition, $\Aone$ is Hausdorff if and only if all its points are \trm{Hausdorff}.

Note that, by definition, $\hcl{x}$ is always a closed set, being an intersection of closed sets.

We shall need Lemma~2.10 and Example~7.1 from the preprint~\cite{Baillif:arxiv:2025}, which are related to the main theorem.
\begin{sublemma}[{\rm\cite[Lemma~2.10]{Baillif:arxiv:2025}}]
If $\Aone$ is locally Hausdorff, for example a manifold, then for each point $x\in\Aone$ the set $\hcl{x}\setminus\{x\}$ is closed and nowhere dense in $\Aone$.
In particular, $x$ is isolated in $\hcl{x}$.
\end{sublemma}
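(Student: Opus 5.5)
We must prove: if $\Aone$ is locally Hausdorff, then for each $x$ the set $\hcl{x}\setminus\{x\}$ is closed and nowhere dense, and $x$ is isolated in $\hcl{x}$.

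We start with the last claim, since everything else rests on it. Choose an open neighbourhood $\Uman$ of $x$ that is Hausdorff in the induced topology. Every point $y\in\Uman\setminus\{x\}$ can then be separated from $x$ by open subsets of $\Uman$. These are open in $\Aone$, so $y\notin\hcl{x}$. Hence $\Uman\cap\hcl{x}=\{x\}$, and $x$ is isolated in $\hcl{x}$.

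Closedness follows directly. We know $\hcl{x}$ is closed, being an intersection of closed sets. The set $\hcl{x}\setminus\{x\}=\hcl{x}\cap(\Aone\setminus\Uman)$ is therefore an intersection of two closed sets, hence closed.

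For nowhere density, since the set is closed it suffices to show it has empty interior.

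\begin{itemize}
\item Suppose some nonempty open set $\Wman$ lies in $\hcl{x}\setminus\{x\}$, and pick $y\in\Wman$.
\item Since $y$ is inseparable from $x$, the neighbourhood $\Wman$ of $y$ meets every neighbourhood of $x$, in particular $\Uman$.
\item But $\Wman\cap\Uman\subset(\hcl{x}\setminus\{x\})\cap\Uman=\varnothing$, a contradiction.
\end{itemize}

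The only step with real content is the isolation of $x$ via a Hausdorff neighbourhood. It uses the fact that separating open sets inside an open subspace are open in $\Aone$; this is exactly where openness of $\Uman$ is needed. The remaining steps are formal.
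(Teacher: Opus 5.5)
Your proof is correct and follows essentially the same route as the paper. An open Hausdorff neighbourhood $U$ of $x$ gives $U\cap\hcl{x}=\{x\}$, and closedness follows from $\hcl{x}\setminus\{x\}=\hcl{x}\setminus U$. Empty interior follows because this set misses the open set $U$ while lying in $\overline{U}$: the paper states this as $\hcl{x}\setminus\{x\}\subset\overline{U}\setminus U$, and your contradiction argument simply spells it out.
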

\begin{proof}
For the reader's convenience, we recall the arguments from~\cite[Lemma~2.10]{Baillif:arxiv:2025}.

Let $\Uman$ be an open Hausdorff neighbourhood of the point $x$.
We show that $\Uman\cap\hcl{x} = \{x\}$, i.e.\ $x$ is isolated in $\hcl{x}$.

Since $\hcl{x}$ is closed, this will prove that $\hcl{x}\setminus\{x\} = \hcl{x}\setminus\Uman$ is also closed.
Moreover, since $\hcl{x} \subset\Cl{\Uman}$, we obtain that $\hcl{x}\setminus\{x\} \subset \Cl{\Uman}\setminus\Uman$.
The latter set has empty interior, and therefore $\hcl{x}\setminus\{x\}$ also has empty interior.

Indeed, for each point $y\in\Uman\setminus\{x\}$ there exist neighbourhoods $\Vman_x, \Vman_y \subset \Uman$ of the points $x$ and $y$, respectively, such that $\Vman_x\cap\Vman_y=\varnothing$.
But then $\Uman_x:=\Uman\cap\Vman_x$ and $\Uman_y:=\Uman\cap\Vman_y$ are open disjoint neighbourhoods of these points in $\Aone$.
Therefore $y\not\in\hcl{x}$.
\end{proof}

From Lemma~\ref{lm:hcl_props}\ref{enum:lm:hcl_props:non-sep} it follows that the relation $y \in \hcl{x}$ (i.e.\ being inseparable) is reflexive and symmetric but, in general, not transitive.
One can then extend it to an equivalence relation on $\Aone$, which is called the \term{transitive closure} and is defined as follows.

We say that points $x,y\in\Aone$ are \term{\trm{chain inseparable}} if there exists a finite sequence of points $x_0=x, x_1,\ldots,x_k = y \in \Aone$ such that $x_i\in\hcl{x_{i+1}}$ for all $i=0,\ldots,k-1$.
In this case we write $x \cbr y$.
Obviously, the relation $\cbr$ of being \trm{chain inseparable}\ is an equivalence relation.
Denote by $\ChNSQ[\Aone]$ the corresponding partition of $\Aone$ into equivalence classes with respect to $\cbr$.
We introduce on $\ChNSQ[\Aone]$ the quotient topology with respect to the corresponding quotient map $\apr[\Aone]\colon\Aone\to\ChNSQ[\Aone]$.
Then the map $\apr[\Aone]$ becomes a quotient map.

For a subset $\Qman\subset\Aone$, the set $\chcl{\Qman} := \apr[\Aone]^{-1}(\apr[\Aone](\Qman))$ will be called the \emph{saturation} of $\Qman$ with respect to the relation~$\cbr$.
Also, $\Qman$ is called \emph{$\cbr$-saturated} if $\chcl{\Qman}=\Qman$.
In particular, for a point $x\in\Aone$, the set $\chcl{x}:= \apr[\Aone]^{-1}(\apr[\Aone](x))$ will be called the \term{chain inseparability class} of this point.

\begin{subremark}\label{rem:relation_hcl_NH}\rm
In~\cite{Baillif:arxiv:2025}, $\hcl{x} \setminus \{x\}$ is denoted by $NH(x)$ or $NH^{1}(x)$, and the set $\chcl{x}$ coincides with $NH^{\infty}(x) \cup \{x\}$.
\end{subremark}

The following lemma generalizes~\cite[Lemma~2.2(2)]{MaksymenkoPolulyakh:MFAT:2016} to the case of a non-Hausdorff space $\Bone$:
\begin{sublemma}\label{lm:f_hcl_to_hcl}
Let $f\colon \Aone\to\Bone$ be a continuous map between topological spaces.
\begin{enumerate}[label*={\rm(\alph*)}, leftmargin=*]
\item\label{enum:lm:f_hcl_to_hcl:hcl}
Then $f(\hcl{x}) \subset \hcl{f(x)}$ for all $x\in\Aone$.
In other words, if $x,y \in \Aone$ are inseparable, then their images $f(x)$ and $f(y)$ are also inseparable.

\item\label{enum:lm:f_hcl_to_hcl:chcl}
Moreover, $f(\chcl{x}) \subset \chcl{f(x)}$ for each point $x\in\Aone$.
In other words, $f$ sends $\cbr$-equivalence classes on $\Aone$ to $\cbr$-equivalence classes on $\Bone$, and therefore induces a continuous map $\bar{f}\colon\ChNSQ[\Aone]\to\ChNSQ[\Bone]$ such that $\apr[\Bone]\circ f = \bar{f}\circ\apr[\Aone]$, i.e.\ the following diagram commutes:
\[
\xymatrix@C=5em{
    \Aone \ar[r]^{f} \ar[d]_{\apr[\Aone]} & \Bone \ar[d]^{\apr[\Bone]} \\
    \ChNSQ[\Aone] \ar[r]^{\bar{f}} & \ChNSQ[\Bone]
}
\]

\item\label{enum:lm:f_hcl_to_hcl:hausd}
If $\Bone$ is Hausdorff, then $\apr[\Bone]$ is a homeomorphism, and $f$ is constant on $\cbr$-equivalence classes.
Therefore there exists a unique continuous map $\hat{f} = \apr[\Bone]^{-1}\circ \bar{f}\colon\ChNSQ[\Aone]\to\Bone$ such that $f = \hat{f}\circ \apr[\Aone]$, i.e.\ the following diagram commutes:
\[
\xymatrix@C=5em{
    \Aone \ar[r]^{f} \ar[d]_{\apr[\Aone]} & \Bone \ar[d]^{\apr[\Bone]}_{\cong} \\
    \ChNSQ[\Aone] \ar[ur]^{\hat{f}} \ar[r]_{\bar{f}} & \ChNSQ[\Bone]
}
\]
\end{enumerate}
\end{sublemma}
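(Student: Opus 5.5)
Part~\ref{enum:lm:f_hcl_to_hcl:hcl} will follow directly from Lemma~\ref{lm:hcl_props}\ref{enum:lm:hcl_props:non-sep} together with continuity of $f$. Let $y\in\hcl{x}$, and let $\Vman$ and $\Wman$ be arbitrary open neighbourhoods of $f(x)$ and $f(y)$ in $\Bone$. Then $f^{-1}(\Vman)$ and $f^{-1}(\Wman)$ are open neighbourhoods of $x$ and $y$. Since $x$ and $y$ are inseparable, these two sets intersect, say at a point $z$. Then $f(z)\in\Vman\cap\Wman$, so $f(x)$ and $f(y)$ are inseparable, that is, $f(y)\in\hcl{f(x)}$.

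For part~\ref{enum:lm:f_hcl_to_hcl:chcl}, let $y\in\chcl{x}$ and take a chain $x=x_0,x_1,\ldots,x_k=y$ with $x_i\in\hcl{x_{i+1}}$. Applying~\ref{enum:lm:f_hcl_to_hcl:hcl} to each link gives $f(x_i)\in\hcl{f(x_{i+1})}$. Hence $f(x_0),\ldots,f(x_k)$ is a chain in $\Bone$, and $f(y)\in\chcl{f(x)}$. It follows that the assignment $\bar f(\apr[\Aone](x)) := \apr[\Bone](f(x))$ is well defined, and by construction $\bar f\circ\apr[\Aone]=\apr[\Bone]\circ f$. This composition is continuous, so Lemma~\ref{lm:qmaps}\ref{enum:lm:qmaps:cont_compos}, applied to the quotient map $\apr[\Aone]$, shows that $\bar f$ is continuous.

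For part~\ref{enum:lm:f_hcl_to_hcl:hausd}, suppose $\Bone$ is Hausdorff. Then $\hcl{b}=\{b\}$ for every $b\in\Bone$, so every chain is constant and each $\cbr$-class is a single point. Hence $\apr[\Bone]$ is a continuous bijection. It is also a quotient map, so a set $\Uman\subset\ChNSQ[\Bone]$ is open exactly when $\apr[\Bone]^{-1}(\Uman)$ is open. Since $\apr[\Bone]$ is a bijection, this means it maps open sets to open sets, so it is a homeomorphism. (Equivalently, one can invoke Lemma~\ref{lm:qmaps}\ref{enum:lm:qmaps:restr_to_inv_img} with $\Bman=\ChNSQ[\Bone]$.)

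By~\ref{enum:lm:f_hcl_to_hcl:chcl}, $f(\chcl{x})\subset\chcl{f(x)}=\{f(x)\}$, so $f$ is constant on $\cbr$-classes. Set $\hat f=\apr[\Bone]^{-1}\circ\bar f$. This map is continuous, and $\hat f\circ\apr[\Aone]=\apr[\Bone]^{-1}\circ\apr[\Bone]\circ f=f$. Uniqueness follows from the surjectivity of $\apr[\Aone]$: any $g$ with $g\circ\apr[\Aone]=f$ is determined on every point of $\ChNSQ[\Aone]$. There is no real obstacle here; the only point needing care is to check well-definedness of $\bar f$ before invoking the quotient property.
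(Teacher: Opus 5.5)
Your proof is correct and takes essentially the same route as the paper. Part (a) is the direct form of the paper's contrapositive argument (disjoint neighbourhoods of $f(x)$ and $f(y)$ would pull back to disjoint neighbourhoods of $x$ and $y$), part (b) is the same chain argument followed by Lemma~\ref{lm:qmaps}\ref{enum:lm:qmaps:cont_compos}, and for part (c), which the paper only says follows directly from (b), you fill in the routine details: singleton classes in $\Bone$, a bijective quotient map being a homeomorphism, and uniqueness of $\hat{f}$ from surjectivity of $\apr[\Aone]$.
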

\begin{proof}
\ref{enum:lm:f_hcl_to_hcl:hcl}
Suppose there exist neighbourhoods $\Vman_1$ and $\Vman_2$ of the points $f(x)$ and $f(y)$ respectively such that $\Vman_1\cap\Vman_2=\varnothing$.
Then their preimages $f^{-1}(\Vman_1)$ and $f^{-1}(\Vman_2)$ are disjoint neighbourhoods of the points $x$ and $y$, which contradicts the inseparability of $x$ and $y$.

\ref{enum:lm:f_hcl_to_hcl:chcl}
Let $x\cbr y\in\Aone$, i.e.\ there exists a finite sequence of points $x=x_0, x_1,\ldots,x_k = y$ such that $x_i\in\hcl{x_{i+1}}$ for all $i=0,\ldots,k-1$.
Then, by~\ref{enum:lm:f_hcl_to_hcl:hcl}, $f(x_i) \in \hcl{f(x_{i+1})}$, and hence $f(x) = f(x_0) \cbr f(x_{k}) = f(y)$.
In other words, $f(y) \in \chcl{f(x)}$, so $\bar{f}$ is well-defined.
Continuity of $\bar{f}$ follows from Lemma~\ref{lm:qmaps}\ref{enum:lm:qmaps:cont_compos}.

Statement~\ref{enum:lm:f_hcl_to_hcl:hausd} follows directly from~\ref{enum:lm:f_hcl_to_hcl:chcl}.
\end{proof}

The following lemma contains some elementary statements from~\cite[Lemma~2.2(3)]{MaksymenkoPolulyakh:MFAT:2016}, \cite[Lemma~2.2.2 and Corollary~2.2.3]{MaksymenkoPolulyakh:PIGC:2021}.
\begin{sublemma}\label{lm:ahaus}
Let $\Aone$ be a topological space and $\apr\colon\Aone\to\Xone$ the quotient map onto the space of its chain inseparability classes.
\begin{enumerate}[leftmargin=*, label={\rm(\arabic*)}]
\item\label{enum:lm:ahaus:saturation}
For an arbitrary subset $\Bman\subset\Aone$, its saturation
\begin{equation}\label{equ:piinv_pi_B}
    \chcl{\Bman} = \chcl{\Bman\cap\Abr} \sqcup (\Bman\cap\Ahaus).
\end{equation}
In particular, if $\Bman\subseteq\Ahaus$ or $\Abr\subseteq\Bman$, then $\Bman$ is $\cbr$-saturated.

\item\label{enum:lm:ahaus:discr}
Let $\Bman\subset\Aone$ be an arbitrary subset, $\beta = \{ \chcl{x} \mid x \in \Bman\}$ the family of chain inseparability classes of points from $\Bman$, $x\in\Bman$ a point, and $y= \apr(x)$ its image in $\Xone$.
Suppose that $\Abr \subseteq \Bman$.
\begin{enumerate}[label={\rm(\alph*)}, leftmargin=*]
\item\label{enum:lm:ahaus:discr:isol_piB}
Then for every open isolating neighbourhood $\Vman\subset\Xone$ of $y$ with respect to $\apr(\Bman)$, its preimage $\apr^{-1}(\Vman)$ is an isolating neighbourhood of the class $\chcl{x}$ with respect to the family $\beta$.
\item\label{enum:lm:ahaus:discr:isol_B}
Conversely, every open neighbourhood $\Uman$ of the set $\chcl{x}$ that is isolating for it with respect to the family $\beta$ is $\cbr$-saturated, and its image $\apr(\Uman)$ is an open isolating neighbourhood of the point $y$ with respect to $\apr(\Bman)$.
\end{enumerate}
In particular, $\apr(\Bman)$ is a discrete subset of $\Xone$ if and only if $\beta = \{ \chcl{x} \mid x \in \Bman\}$ is a discrete family in $\Aone$.

\item\label{enum:lm:ahaus:one_pt_in_hclx}
Let $\Aman\subset\Aone$ be a subset such that $\Aman\cap\hcl{x} = \{x\}$ for each point $x\in\Aman$.
Then $\Aman$ is Hausdorff in the induced topology.

\item\label{enum:lm:ahaus:emb}
The set $\Ahaus$ is a Hausdorff space in the induced topology.
If $\Ahaus$ is open or closed, then so is its image $\apr(\Ahaus)$, and the restriction $\restr{\apr}{\Ahaus}\colon \Ahaus\to\Xone$ is a topological embedding, i.e.\ a homeomorphism onto its image $\apr(\Ahaus)$.

\item\label{enum:lm:ahaus:compact}
Every compact subset $\Kman\subset\Ahaus$ is closed in $\Aone$.
\end{enumerate}
\end{sublemma}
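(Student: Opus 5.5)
The plan is to check the five statements one at a time, each directly from the definitions. The only fact used repeatedly is this: a Hausdorff point forms a one-point chain inseparability class, and every class with at least two points consists of branch points.

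\emph{Statement~\ref{enum:lm:ahaus:saturation}.} Let $x\in\Ahaus$. Any chain $x=x_0,x_1,\ldots$ with $x_0\in\hcl{x_1}$ forces $x_1\in\hcl{x_0}=\{x\}$ by Lemma~\ref{lm:hcl_props}, so by induction $\chcl{x}=\{x\}$. Now let $\chcl{z}$ contain at least two points. Every $w\in\chcl{z}$ is joined to some other point of the class by a chain, and the first step away from $w$ gives a point $w'\neq w$ inseparable from $w$, so $w\in\Abr$. Hence $\chcl{\Bman}=\bigcup_{x\in\Bman}\chcl{x}$ splits as $\chcl{\Bman\cap\Abr}\sqcup(\Bman\cap\Ahaus)$. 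The classes in the first part lie in $\Abr$; a branch point's class has at least two points, since a point inseparable from it is chain-related to it. The two special cases follow immediately: if $\Bman\subseteq\Ahaus$, then $\chcl{\Bman}=\Bman$; if $\Abr\subseteq\Bman$, then $\chcl{\Bman\cap\Abr}=\Abr\subseteq\Bman$.

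\emph{Statement~\ref{enum:lm:ahaus:discr}.}
\begin{enumerate}[label={\rm(\alph*)}, leftmargin=*]
\item If $\Vman\cap\apr(\Bman)=\{y\}$, then $\apr^{-1}(\Vman)$ is open, contains $\chcl{x}$, and misses every other class $\chcl{x'}\in\beta$, because $\apr(x')\neq y$ lies in $\apr(\Bman)\setminus\Vman$.
\item Let $\Uman$ be open, contain $\chcl{x}$, and miss all other members of $\beta$. Since $\Abr\subseteq\Bman$, every branch point lies in some member of $\beta$, so $\Uman\cap\Abr\subseteq\chcl{x}$. By~\eqref{equ:piinv_pi_B},
\[
\chcl{\Uman}=\chcl{\Uman\cap\Abr}\sqcup(\Uman\cap\Ahaus)\subseteq\chcl{x}\cup\Uman=\Uman .
\]
Thus $\Uman$ is saturated, and $\apr(\Uman)$ is open because $\apr$ is a quotient map. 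Moreover $\apr(\Uman)\cap\apr(\Bman)=\{y\}$, since $\Uman$ meets no class of $\beta$ other than $\chcl{x}$.
\end{enumerate}
The ``in particular'' clause follows by combining (a) and (b).

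\emph{Statement~\ref{enum:lm:ahaus:one_pt_in_hclx}.} For distinct $a,b\in\Aman$ we have $b\notin\hcl{a}$. So $a$ and $b$ have disjoint neighbourhoods in $\Aone$, and their intersections with $\Aman$ separate $a$ and $b$ in $\Aman$.

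\emph{Statement~\ref{enum:lm:ahaus:emb}.} The set $\Ahaus$ satisfies the hypothesis of~\ref{enum:lm:ahaus:one_pt_in_hclx}, so it is Hausdorff. By~\ref{enum:lm:ahaus:saturation} it is saturated, so $\apr^{-1}(\apr(\Ahaus))=\Ahaus$. Since $\apr$ is a quotient map, $\apr(\Ahaus)$ is open (closed) whenever $\Ahaus$ is. By Lemma~\ref{lm:qmaps}\ref{enum:lm:qmaps:restr_to_inv_img}, the restriction $\restr{\apr}{\Ahaus}$ is then a quotient map onto $\apr(\Ahaus)$. It is injective because the classes are singletons, so it is a homeomorphism onto its image.

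\emph{Statement~\ref{enum:lm:ahaus:compact}.} This is the standard compactness argument, with Hausdorffness available only at the points of $\Kman$. Fix $y\in\Aone\setminus\Kman$. Each $x\in\Kman$ is a Hausdorff point, so $y\notin\hcl{x}$, and there are disjoint open sets $\Uman_x\ni x$ and $\Wman_x\ni y$. Choose finitely many $\Uman_{x_i}$ covering $\Kman$. Then $\bigcap_i\Wman_{x_i}$ is a neighbourhood of $y$ disjoint from $\Kman$, so $\Kman$ is closed.

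None of these steps is a real obstacle. The only point needing care is the saturation argument in~\ref{enum:lm:ahaus:discr}(b), where the hypothesis $\Abr\subseteq\Bman$ is used.
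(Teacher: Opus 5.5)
Your proof is correct and follows essentially the same route as the paper for (1)--(4): the paper takes the $\cbr$-saturatedness of $\Abr$ and $\Ahaus$ for granted where you verify it, and in (2)(b) it splits into the cases $x\in\Ahaus$ and $x\in\Abr$ rather than invoking~(1), but the arguments are the same. For (5) the paper only cites Corollary~2.2.3 of Maksymenko--Polulyakh, whereas you give the standard finite-subcover argument, which needs Hausdorffness only at the points of $\Kman$; this argument is correct and makes the lemma self-contained.
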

\begin{proof}
\ref{enum:lm:ahaus:saturation}
Since $\Abr$ and $\Ahaus$ are $\cbr$-saturated and $\Aone = \Abr\sqcup\Ahaus$, we have
\[ \chcl{\Bman} = \chcl{\Bman\cap\Abr} \sqcup \chcl{\Bman\cap\Ahaus}. \]
But $\chcl{\Bman\cap\Ahaus}=\Bman\cap\Ahaus$, which proves~\eqref{equ:piinv_pi_B}.

In particular, if $\Bman\subset\Ahaus$, i.e.\ $\Bman\cap\Abr=\varnothing$, then $\chcl{\Bman} = \chcl{\varnothing} \sqcup (\Bman\cap\Ahaus) = \Bman$.

If $\Abr\subset\Bman$, then $\Bman\cap\Abr = \Abr$, and therefore
\[\chcl{\Bman} = \Abr \sqcup (\Bman\cap\Ahaus) = \Abr \sqcup (\Bman\setminus\Abr) = \Bman.\]

\ref{enum:lm:ahaus:discr}
First note that since $\Abr\subset\Bman$, $\Bman$ is $\cbr$-saturated.

\ref{enum:lm:ahaus:discr:isol_piB}
If $\Vman\cap\apr(\Bman) = \{q\}$, then $\Uman=\apr^{-1}(\Vman)$ is an open neighbourhood of the set $\chcl{x}$ and moreover
\[ \apr^{-1}(\Vman)\cap\Bman = \apr^{-1}(q)=\chcl{x}. \]
Thus $\Uman$ is isolating for $\chcl{x}$ with respect to $\beta$.

\ref{enum:lm:ahaus:discr:isol_B}
Conversely, let $\Uman$ be an isolating neighbourhood of $\chcl{x}$ with respect to $\beta$, i.e.\ $\Uman\cap\Bman = \chcl{x}$.
Note that the intersection $\Uman\cap\Ahaus$ is $\cbr$-saturated.
Therefore, to prove that $\Uman$ is $\cbr$-saturated it suffices to show that $\Uman\cap\Abr$ is also $\cbr$-saturated.

Indeed, since $\Abr\subset\Bman$, we have
\[
    \Uman\cap\Abr\subset \Uman\cap\Bman = \chcl{x}.
\]
If $x\in\Ahaus$, then $\Uman\cap\Bman = \chcl{x} = \{x\} \subset \Ahaus$, and therefore $\Uman\cap\Abr=\varnothing$.
If $x\in\Abr$, then $\Uman\cap\Abr= \Uman\cap\Bman = \chcl{x}$.
Thus $\Uman\cap\Abr$ is $\cbr$-saturated.

Now the $\cbr$-saturatedness of $\Uman$ implies that $\Vman:=\apr(\Uman)$ is an open neighbourhood of $y$ in $\Xone$ and contains no points of $\apr(\Bman)$ other than $y$.

\ref{enum:lm:ahaus:one_pt_in_hclx}
For each point $x\in\Aman$ we have $\{x\} \subset \hcl[\Aman]{x} \stackrel{\eqref{equ:hclA_x}}{\subset} \hcl{x} \cap \Aman = \{x\}$.
Therefore $\hcl[\Aman]{x} = \{x\}$ for each $x\in\Aman$, and hence $\Aman$ is Hausdorff in the induced topology.

\ref{enum:lm:ahaus:emb}
The Hausdorffness of $\Ahaus$ follows from~\ref{enum:lm:ahaus:one_pt_in_hclx}, and the injectivity of $\restr{\apr}{\Ahaus}\colon\Ahaus\to\Xone$ follows from the definition of the set $\Ahaus$.
If $\Ahaus$ is open or closed, then its image $\apr(\Ahaus)$ has the same property by the fact that $\apr$ is a quotient map, and in this case, by Lemma~\ref{lm:qmaps}\ref{enum:lm:qmaps:restr_to_inv_img}, $\apr$ induces a homeomorphism from $\Ahaus$ onto $\apr(\Ahaus)$.

Statement~\ref{enum:lm:ahaus:compact} is proved in~\cite[Corollary~2.2.3]{MaksymenkoPolulyakh:PIGC:2021}.
\end{proof}

\subsection{One-dimensional manifolds}
A topological space $(\Aone,\tau)$ is called a \term{one-dimensional manifold} if for each point $x\in\Aone$ one of the following equivalent properties holds:
\begin{itemize}[leftmargin=*]
\item there exists an open neighbourhood $\Uman$ of $x$ homeomorphic either to $(-1,1)$ or to $[0;1)$; such neighbourhoods will be called \term{standard};
\item there exists an open neighbourhood $\Uman$ of $x$ and an open embedding $\Achart\colon\Uman\to[0;1)$; such embeddings are called \term{(coordinate) charts}.
\end{itemize}

The following elementary lemma is the one-dimensional case of the Brouwer invariance of domain theorem.
This statement has a ``homological'' nature, but in the one-dimensional case it reduces to connectedness.
For the reader's convenience, we give a short proof.
\begin{sublemma}\label{lm:brower}
Let $\Aone$ be a one-dimensional manifold, $\Uman\subset\Aone$ an open subset, $x\in\Uman$, and $\Achart,\Bchart\colon\Uman\to[0;1)$ two open embeddings.
Then
\begin{itemize}
\item either $\Achart(x) = \Bchart(x) = 0$, in which case $x$ is called a \term{boundary point},
\item or $\Achart(x), \Bchart(x) \in (0;1)$, in which case $x$ is called an \term{interior point}.
\end{itemize}
\end{sublemma}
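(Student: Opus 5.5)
The plan is to reduce the statement to a purely topological fact about the half-open interval $[0;1)$: a point $s\in[0;1)$ equals $0$ if and only if it has arbitrarily small connected open neighbourhoods $\Wman$ such that $\Wman\setminus\{s\}$ is connected. The two charts then transport this intrinsic property of $x$ into one another, so both give the same answer.

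First, compare the two charts on a common domain. The sets $\Achart(\Uman)$ and $\Bchart(\Uman)$ are open in $[0;1)$. The composition $h=\Bchart\circ\Achart^{-1}\colon\Achart(\Uman)\to\Bchart(\Uman)$ is a homeomorphism between open subsets of $[0;1)$, and it sends $s=\Achart(x)$ to $t=\Bchart(x)$. So it suffices to show that $s=0$ if and only if $t=0$. By symmetry (replace $h$ by $h^{-1}$) it is enough to derive a contradiction from $s=0$ and $t\in(0;1)$.

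Next, localize around these points. Suppose $s=0$ and $t>0$. Choose $\eps>0$ such that $[0;\eps)\subset\Achart(\Uman)$. Its image $h([0;\eps))$ is a connected set containing $t$; it is also open, since $h$ is a homeomorphism onto the open set $\Bchart(\Uman)$. Since it is a connected open subset of $[0;1)$, it is an interval. Shrinking $\eps$ if necessary, we may assume this interval lies in $(0;1)$, because $t>0$ and $h$ is continuous at $0$. So $h([0;\eps))=(a;b)$ with $a<t<b$. Now remove the point: $h$ restricts to a homeomorphism from $(0;\eps)$ onto $(a;b)\setminus\{t\}=(a;t)\cup(t;b)$. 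The first set is connected and the second is not, which is a contradiction. Hence $s=0$ forces $t=0$, and symmetrically $t=0$ forces $s=0$. Consequently either both values are $0$ or both lie in $(0;1)$, as claimed.

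The only step that needs care, and the main obstacle, is checking that the image of $[0;\eps)$ can be taken to be an open interval inside $(0;1)$. This requires two facts. First, connected open subsets of $[0;1)$ are intervals, which follows from the standard description of connected subsets of $\bR$. Second, $h$ is an open map into $[0;1)$, which holds because $\Bchart(\Uman)$ is open and $h$ is a homeomorphism onto it. Once this is in place, the connectedness argument finishes the proof immediately.
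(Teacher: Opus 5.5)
Your proposal is correct and takes essentially the same approach as the paper: both distinguish $0$ from the points of $(0;1)$ by whether small connected open neighbourhoods stay connected after the point is removed, and both transport this property through the charts. Your version works with the transition map $\psi\circ\phi^{-1}$, and it makes explicit a step the paper leaves implicit: a small connected open neighbourhood of a point $t>0$ is an open interval in $(0;1)$, so removing $t$ disconnects it.
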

\begin{proof}
Note that the point $0\in[0;1)$ is characterised by having a basis of open connected neighbourhoods $\Vman$ such that the complement $\Vman\setminus\{0\}$ is connected.
On the other hand, every point $t\in(0;1)$ has a basis of open connected neighbourhoods $\Vman$ such that $\Vman\setminus\{t\}$ is disconnected.
The statement of the lemma now follows because the above properties are preserved under the homeomorphisms $\Achart$ and $\Bchart$.
\end{proof}

The set of all boundary points of a one-dimensional manifold $\Aone$ is denoted by $\partial\Aone$ and called the \term{boundary} of $\Aone$, and its complement $\Int{\Aone}:=\Aone\setminus\partial\Aone$ is called the \term{interior} of $\Aone$.
It follows easily from the definition that $\Int{\Aone}$ is open, while $\partial\Aone$ is closed and discrete.
\begin{corollary}\label{cor:open_emb_1manif}
Let $\Aone$ and $\Bone$ be one-dimensional manifolds and $\Achart\colon\Aone\to\Bone$ an open embedding.
Then $\Achart(\Int{\Aone}) \subset \Int{\Bone}$ and $\Achart(\partial\Aone) \subset \partial\Bone$.
\end{corollary}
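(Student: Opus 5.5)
The statement to prove is Corollary~\ref{cor:open_emb_1manif}: an open embedding $\Achart\colon\Aone\to\Bone$ of one-dimensional manifolds sends interior points to interior points and boundary points to boundary points. The plan is to reduce this to Lemma~\ref{lm:brower} by composing charts, so that both charts live on a single open set.

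Let $x\in\Aone$ and put $y=\Achart(x)$. Choose a chart of $\Bone$ at $y$, that is, an open neighbourhood $\Vman$ of $y$ and an open embedding $\Bchart\colon\Vman\to[0;1)$. Also choose a chart of $\Aone$ at $x$, that is, an open neighbourhood $\Uman'$ of $x$ and an open embedding $\eta\colon\Uman'\to[0;1)$.

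Set $\Uman=\Uman'\cap\Achart^{-1}(\Vman)$. This is an open neighbourhood of $x$ in $\Aone$. Since $\Achart$ is an open embedding, $\Achart(\Uman)$ is an open neighbourhood of $y$ contained in $\Vman$, and $\restr{\Achart}{\Uman}\colon\Uman\to\Achart(\Uman)$ is a homeomorphism.

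We now have two open embeddings of the open set $\Achart(\Uman)\subset\Bone$ into $[0;1)$:
\begin{align*}
    \psi_1 &:= \restr{\Bchart}{\Achart(\Uman)}, &
    \psi_2 &:= \eta\circ(\restr{\Achart}{\Uman})^{-1}.
\end{align*}
The first is the restriction of a chart to an open subset, so it is an open embedding. The second is the composition of a homeomorphism onto the open set $\Uman$ with the restriction of $\eta$ to $\Uman$. Since $\Uman$ is open in $\Uman'$, this restriction is again an open embedding into $[0;1)$, and hence so is $\psi_2$.

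Apply Lemma~\ref{lm:brower} to the point $y$ and the two open embeddings $\psi_1,\psi_2$ of $\Achart(\Uman)$. It gives $\psi_1(y)=0$ if and only if $\psi_2(y)=\eta(x)=0$.

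By Lemma~\ref{lm:brower} applied in $\Aone$ (the boundary/interior status of a point does not depend on the chart chosen), $\eta(x)=0$ holds exactly when $x\in\partial\Aone$. Likewise, applied in $\Bone$, $\Bchart(y)=\psi_1(y)=0$ holds exactly when $y\in\partial\Bone$. Combining these, $x\in\partial\Aone$ if and only if $\Achart(x)\in\partial\Bone$, which gives both $\Achart(\partial\Aone)\subset\partial\Bone$ and $\Achart(\Int{\Aone})\subset\Int{\Bone}$.

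The only point needing care is that restrictions and compositions of open embeddings with open subsets remain open embeddings into $[0;1)$. This holds because open subsets of open subsets are open and homeomorphisms preserve openness. No genuine obstacle is expected.
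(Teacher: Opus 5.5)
Your proof is correct and follows the paper's approach: the paper gives no written proof and treats this as an immediate corollary of Lemma~\ref{lm:brower}, and your argument supplies it by transporting a chart of $\Aone$ at $x$ through $\Achart$, so that it and a chart of $\Bone$ at $\Achart(x)$ are both defined on the open set $\Achart(\Uman)$. The one implicit step is that boundary status does not depend on the chart even when two charts have different domains. You handle it correctly: restrict both charts to the intersection of their domains and apply the same lemma.
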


\subsection{(Open) one-dimensional CW complexes}\label{sect:1CW}
For $a<b\in\bR$, each of the following subsets: $(a;b)$ $[a;b)$, $(a;b]$, $[a;b] \subset \bR$ will be called a \term{segment} with endpoints $a$ and $b$.
Here, to simplify some arguments, $(b;a)$ with $b>a$ denotes the interval $(a;b)$, and analogous conventions are used for $[b;a]$, $[b;a)$ and $(b;a]$.
In particular, if $i\in\{0,1\} = \partial[0;1]$ and $\eps\in(0;1)$ is some number, then $[i;\eps) \equiv (\eps;i]$ is always a neighbourhood of the point $i$ in $[0;1]$.

Let $\Lambda$ be a discrete topological space.
Then the topological product $\CEman = [0;1]\times\Lambda$ is a one-dimensional manifold.
Note that every (possibly empty) subset $\EmptyClass \subset \{0;1\}\times\Lambda$ is closed in $\CEman$, and therefore $\Eman = \CEman\setminus\EmptyClass$ is also a one-dimensional manifold, each connected component $\Kman_{\lambda}=\Eman\cap\bigl([0;1]\times\{\lambda\}\bigr)$ of which is a segment.
Here
\begin{align*}
    \partial\Eman &:= \Eman \cap \bigl(\{0,1\} \times \Lambda\bigr), &
    \Int{\Eman}   &:= (0;1)\times\Lambda.
\end{align*}

Conversely, it is easy to see that every one-dimensional manifold each of whose connected components is a segment is homeomorphic to a space of the form $([0;1]\times\Lambda)\setminus\EmptyClass$ for some discrete space $\Lambda$ and some subset $\EmptyClass\subset\{0;1\}\times\Lambda$.

Fix an arbitrary equivalence relation $\relat$ on the boundary $\partial\Eman$.
Extend it to the whole space $\Eman$ by assuming that the class of each point $t\in\Int{\Eman}$ consists only of that point itself.
Let further $\Acw = \Erelat$ be the set of equivalence classes endowed with the quotient topology with respect to the quotient map $\epr\colon\Eman\to\Acw$.
In other words, $\Acw$ is obtained from the disjoint union of segments by gluing some of their endpoints.
We will call $\Acw$ an \term{open one-dimensional CW complex}, or an \term{open topological graph}.
In this context,
\begin{itemize}
\item
the classes of points from $\partial\Eman$ are called \term{$0$-cells} or \term{vertices} of $\Acw$;
\item
the images of the open intervals $(0;1)\times\{\lambda\}$ are called \term{$1$-cells} or \term{open edges} of $\Acw$;
\item
and the quotient map $\epr\colon\Eman\to\Acw$ is called an \term{atlas}.
\end{itemize}

\begin{subremark}\rm
If $\EmptyClass=\varnothing$, then $\Acw$ is a \term{one-dimensional CW complex} in the usual sense.
On the other hand, suppose $\EmptyClass\ne\varnothing$.
Extend the relation $\relat$ to $\partial\CEman$ by adding all points of $\EmptyClass$ as a separate class, and construct the one-dimensional CW complex $\CErelat$ with respect to this relation.
Let also $\epr\colon\CEman\to\CErelat$ be the corresponding quotient map.
Then $\epr(\EmptyClass)$ is a single vertex in $\CErelat$, and $\Acw = (\CErelat) \setminus \epr(\EmptyClass)$.
In other words, an open one-dimensional CW complex is
\begin{itemize}
\item either an ordinary one-dimensional CW complex (the case $\EmptyClass=\varnothing$),
\item or the complement of a single vertex in some one-dimensional CW complex (the case $\EmptyClass\ne\varnothing$).
\end{itemize}
\end{subremark}

A \term{structure of an (open) topological graph} on a topological space $\Xone$ is a homeomorphism $\xi\colon\Acw\to\Xone$ of some (open) topological graph $\Acw$ onto $\Xone$.
In this case, if $\epr\colon\Eman\to\Acw$ is an atlas for $\Acw$, then the composition $\xi\circ\epr\colon\Eman\to\Xone$ is also called an \term{atlas} for $\Xone$, and the images of the vertices (resp.\ open edges) of $\Acw$ are called \term{vertices} (resp.\ \term{open edges}) of $\Xone$.

The following simple lemma gives a characterisation of atlases.
\begin{sublemma}\label{lm:char:1cw}
Let $\Lambda$ be a discrete topological space, $\CEman = [0;1]\times \Lambda$, $\EmptyClass\subset\partial\CEman$ an arbitrary subset, $\Eman = \CEman\setminus\EmptyClass$, and $\func\colon\Eman\to\Xone$ a continuous surjective map onto some topological space $\Xone$.
Then the following conditions are equivalent:
\begin{enumerate}[label={\rm(\arabic*)}, leftmargin=*]
\item\label{enum:lm:char:1cw:struct}
$\func$ defines a structure of a topological graph on $\Xone$, i.e.\ there exists an equivalence relation $\relat$ on $\partial\Eman$ and a homeomorphism $\xi\colon\Erelat \to \Xone$ such that $\func = \xi\circ\epr$, where $\epr\colon\Eman\to\Erelat$ is the quotient map.

\item\label{enum:lm:char:1cw:quotient}
$\func$ is a quotient map, $\func(\partial\Eman) \cap \func(\Int{\Eman}) = \varnothing$, and the restriction $\restr{\func}{\Int{\Eman}}\colon\Int{\Eman} \to \Xone$ is injective.
\end{enumerate}
\end{sublemma}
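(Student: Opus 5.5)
The implication \ref{enum:lm:char:1cw:struct}$\Rightarrow$\ref{enum:lm:char:1cw:quotient} is immediate. The atlas $\epr\colon\Eman\to\Erelat$ is a quotient map by definition, and composing it with the homeomorphism $\xi$ keeps it a quotient map, so $\func=\xi\circ\epr$ is quotient. By construction of $\relat$, every point $t\in\Int{\Eman}$ forms its own class. Hence $\epr$ is injective on $\Int{\Eman}$, and no point of $\Int{\Eman}$ is identified with a point of $\partial\Eman$, since the relation only glues points of $\partial\Eman$ among themselves. Because $\xi$ is a bijection, both properties transfer to $\func$: we get $\func(\partial\Eman)\cap\func(\Int{\Eman})=\varnothing$, and $\restr{\func}{\Int{\Eman}}$ is injective.

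For \ref{enum:lm:char:1cw:quotient}$\Rightarrow$\ref{enum:lm:char:1cw:struct}, I will define the relation on $\partial\Eman$ by setting $a\relat b$ iff $\func(a)=\func(b)$, and extend it trivially to $\Int{\Eman}$ as in the construction of open CW complexes. The main step is to check that this extended relation coincides exactly with the fibre relation of $\func$ on all of $\Eman$, i.e.\ that $\func(x)=\func(y)$ iff $x\relat y$.
\begin{itemize}
\item If both points lie in $\partial\Eman$, this is the definition of $\relat$.
\item If one point lies in $\Int{\Eman}$ and the other in $\partial\Eman$, equality of images is excluded by the disjointness hypothesis.
\item If both lie in $\Int{\Eman}$, injectivity of $\restr{\func}{\Int{\Eman}}$ gives $x=y$.
\end{itemize}
So $\relat$ is precisely the fibre relation of $\func$.

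Consequently $\func$ factors through a map $\xi\colon\Erelat\to\Xone$ with $\func=\xi\circ\epr$. This $\xi$ is a bijection: it is surjective because $\func$ is, and injective because the fibres of $\func$ are exactly the $\relat$-classes. It remains to show $\xi$ is a homeomorphism. Continuity of $\xi$ follows from Lemma~\ref{lm:qmaps}\ref{enum:lm:qmaps:cont_compos} applied to the quotient map $\epr$, since $\xi\circ\epr=\func$ is continuous. For continuity of $\xi^{-1}$, take an open set $\Wman\subset\Erelat$. Then $\epr^{-1}(\Wman)=\func^{-1}(\xi(\Wman))$ is open in $\Eman$, and since $\func$ is a quotient map, $\xi(\Wman)$ is open in $\Xone$.

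There is no real obstacle here. The only point requiring care is confirming that the fibres of $\func$ are exactly the $\relat$-classes, which the three-case check above settles. That check uses both the disjointness of $\func(\partial\Eman)$ and $\func(\Int{\Eman})$ and the injectivity of $\func$ on $\Int{\Eman}$.
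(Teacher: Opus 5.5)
Your proof is correct and follows the paper's argument. You define $\sim$ on $\partial E$ by equality of images under $f$, use the disjointness and injectivity hypotheses to see that it coincides with the fibre relation of $f$, and conclude that the induced bijection $\xi$ is a homeomorphism because both $f$ and $p$ are quotient maps. The only cosmetic difference is that you verify the openness of $\xi(W)$ by hand instead of citing Lemma~\ref{lm:qmaps}\ref{enum:lm:qmaps:cont_compos} for $\xi^{-1}=p\circ$-factorisation, which amounts to the same argument.
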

\begin{proof}
The implication \ref{enum:lm:char:1cw:struct}$\Rightarrow$\ref{enum:lm:char:1cw:quotient} is obvious.

\ref{enum:lm:char:1cw:quotient}$\Rightarrow$\ref{enum:lm:char:1cw:struct}.
Introduce the following equivalence relation $\relat$ on $\partial\Eman$: for $a,b\in\partial\Eman$, $a\relat b$ if and only if $\func(a) = \func(b)$.
Extend it to $\Int{\Eman}$ by setting $x\relat y$ if and only if $x=y$.
Let $\Erelat$ be the corresponding quotient space.
The condition $\func(\partial\Eman) \cap \func(\Int{\Eman}) = \varnothing$ and the injectivity of the restriction $\restr{\func}{\Int{\Eman}}$ show that $x\relat y\in\Eman$ if and only if $\func(x) = \func(y)$.
Therefore there exists a unique bijection $\xi\colon\Erelat\to\Xone$ such that $\func = \xi\circ\epr$, and hence $\epr = \xi^{-1}\circ\func$.
Since $\func$ and $\epr$ are quotient maps, by Lemma~\ref{lm:qmaps}\ref{enum:lm:qmaps:cont_compos}, $\xi$ and $\xi^{-1}$ are continuous, i.e.\ $\xi$ is a homeomorphism.
\end{proof}

\begingroup
\newcommand\II[1]{I_{#1}}
We make one more simple observation about the structure of neighbourhoods of vertices of a topological graph $\Acw$.
\begin{sublemma}\label{lm:1cw_vert_nbh}
\begin{enumerate}[label={\rm(\arabic*)}, leftmargin=*]
\item\label{enum:lm:1cw_vert_nbh:nbh}
Let $x$ be an arbitrary vertex of $\Acw$ and $\epr^{-1}(x) = \{ (t_i, \lambda_i) \in \partial\Eman \}_{i\in\II{x}}$ the corresponding equivalence class of $\relat$, where $t_i \in \{0;1\}$ and $\lambda_i \in \Lambda$ for all $i\in\II{x}$.
For each $i\in\II{x}$ choose an arbitrary number $\eps_i\in(0;1)$ and consider the sets
\begin{align}\label{equ:Ux_Vx}
    \Uman_x &= \mathop{\cup}\limits_{i\in\II{x}}( [t_i;\eps_i) \times \{\lambda_i\} ) \ \subset \Eman, &
    \Vman_x &= \epr(\Uman_x) \subset \Acw.
\end{align}
Then $\Vman_x$ is an open neighbourhood of the point $x\in\Acw$ that contains no other vertices of $\Acw$, and also no open edge of $\Acw$.

\item\label{enum:lm:1cw_vert_nbh:int_bd}
The restriction $\epr\colon\Int{\Eman}\to\Acw$ is an open embedding, and the set of vertices $\epr(\partial\Eman)$ is closed and strongly discrete,

\item\label{enum:lm:1cw_vert_nbh:haus}
$\Acw$ is Hausdorff.
\end{enumerate}
\end{sublemma}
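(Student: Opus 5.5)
The proof will work directly with the quotient map $\epr\colon\Eman\to\Acw$. The basic observation is that $\epr$ is injective on $\Int{\Eman}=(0;1)\times\Lambda$. Moreover, every subset of $\Int{\Eman}$ is $\relat$-saturated, because the classes of interior points are singletons.

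For \ref{enum:lm:1cw_vert_nbh:nbh} I will check that $\Uman_x$ is saturated and open in $\Eman$.
\begin{itemize}
\item \emph{Saturated.} $\Uman_x$ contains the whole class $\epr^{-1}(x)$. It contains no other boundary points, since $[t_i;\eps_i)\times\{\lambda_i\}$ meets $\partial\Eman$ only at $(t_i,\lambda_i)$. This uses $\eps_i\in(0;1)$ and the convention $[1;\eps)=(\eps;1]$. If both endpoints of the same edge $\lambda$ lie in $\epr^{-1}(x)$, the two half-open pieces are still disjoint as long as $\eps$'s are chosen appropriately. Even without that choice, their union is open in $[0;1]\times\{\lambda\}$ and contains only those two endpoints.
\item \emph{Open.} $\Lambda$ is discrete, so $\Uman_x$ is open in $\Eman$ exactly when each intersection with $[0;1]\times\{\lambda\}$ is open there. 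Each such intersection is a union of at most two half-open end intervals, so it is open.
\end{itemize}
Since $\epr$ is a quotient map and $\Uman_x$ is saturated and open, $\Vman_x=\epr(\Uman_x)$ is open. The only vertex it contains is $x$. It contains no open edge $\epr((0;1)\times\{\lambda\})$: for $\lambda$ not among the $\lambda_i$ it misses the edge entirely, and otherwise it omits the points with coordinate in $[\eps_i;1)$ (respectively $(0;\eps_i]$).

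For \ref{enum:lm:1cw_vert_nbh:int_bd}, the set $\Int{\Eman}$ is open and saturated, so every open $W\subset\Int{\Eman}$ is open and saturated in $\Eman$. Hence $\epr(W)$ is open, which shows that $\restr{\epr}{\Int{\Eman}}$ is an injective open continuous map, i.e.\ an open embedding. It follows that the vertex set $\epr(\partial\Eman)$ is closed, being the complement of the open set $\epr(\Int{\Eman})$.

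Strong discreteness comes from choosing $\eps_i=1/3$ for every $i$ and every vertex $x$. Then the sets $\Uman_x$ for distinct vertices $x$ are pairwise disjoint: each lies within distance $1/3$ of distinct endpoints, and two distinct endpoints of the same segment are at distance $1$. These $\Uman_x$ are saturated, so their images $\Vman_x$ are pairwise disjoint open neighbourhoods of the vertices.

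For \ref{enum:lm:1cw_vert_nbh:haus} I will separate two distinct points $a,b\in\Acw$ case by case.
\begin{itemize}
\item \emph{Two vertices.} Use the disjoint sets $\Vman_a$ and $\Vman_b$ from the $1/3$ choice above.
\item \emph{Two interior points.} They lie in the open Hausdorff subspace $\epr(\Int{\Eman})\cong(0;1)\times\Lambda$, so they are separated there, hence in $\Acw$.
\item \emph{A vertex $a$ and an interior point $b=\epr(s,\lambda)$.} Choose every $\eps_i$ smaller than $\min(s,1-s)$. Then $\Vman_a$ misses a small interval $\epr(J\times\{\lambda\})$ around $b$, and this interval is open by the embedding property.
\end{itemize}

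The only real subtlety is the saturation and disjointness bookkeeping when both endpoints of one segment belong to the same class, or to two different classes. I expect this to be handled by the uniform choice $\eps\le1/3$.
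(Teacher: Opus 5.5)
Your route is the paper's: saturation of $\Uman_x$ plus the quotient property for (1), and injectivity of $\epr$ on the open saturated set $\Int{\Eman}$ for the embedding. For strong discreteness you use a uniform $\eps$ making the sets $\Uman_x$ pairwise disjoint (the paper takes $\tfrac12$, you take $\tfrac13$). Hausdorffness uses the same three-case separation.

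\textbf{Gap: the claim that $\Vman_x$ contains no open edge.} Suppose both endpoints of one segment lie in $\epr^{-1}(x)$, say $(t_i,\lambda_i)=(0,\lambda)$ and $(t_j,\lambda_j)=(1,\lambda)$. Then $\Uman_x\cap\bigl([0;1]\times\{\lambda\}\bigr)=\bigl([0;\eps_i)\cup(\eps_j;1]\bigr)\times\{\lambda\}$. This set need not omit $[\eps_i;1)$ as you assert; it is the whole segment as soon as $\eps_j<\eps_i$. You flagged this loop case yourself but checked only saturation there.

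The step cannot be repaired for arbitrary $\eps_i$, because the clause is then false. Take the CW circle with one vertex $x$ and one loop $\lambda$, and choose $\eps_i=\tfrac34$, $\eps_j=\tfrac12$. Under either reading of the convention this gives $\Uman_x=[0;1]\times\{\lambda\}$ and $\Vman_x=\Acw$. The paper's proof of (1) establishes only openness of $\Vman_x$ and passes over this clause without comment.

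What is true, and all that is used later in the proof of (G2)$\Rightarrow$(G1), is that such a neighbourhood exists for suitably restricted $\eps$'s. Impose $\eps_{(0,\lambda)}\le\eps_{(1,\lambda)}$ on loops, e.g.\ let $\eps_i$ depend only on $\lambda_i$, so that a loop at $x$ omits the interior point $(\eps_{\lambda},\lambda)$. With that restriction your argument is complete.

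\textbf{Smaller issue: inconsistent convention.} In (1) you invoke the paper's convention $[1;\eps)=(\eps;1]$, but in (2) and (3) you treat $\eps$ as a distance from the endpoint.
\begin{itemize}
\item Under the paper's convention, your choice $\eps_i<\min(s,1-s)$ in the vertex-versus-interior-point case puts $(s,\lambda)$ inside $(\eps_i;1]$ whenever $(1,\lambda)\in\epr^{-1}(a)$. There you need $\eps_i<s$ if $t_i=0$ and $\eps_i>s$ if $t_i=1$. Alternatively, as the paper does, simply choose the $\eps_i$ so that $\Uman_a$ misses $(s-\delta;s+\delta)\times\{\lambda\}$.
\item Likewise, for the uniform choice $\tfrac13$, the sets $\Uman_x$ are disjoint because $[0;\tfrac13)\cap(\tfrac13;1]=\varnothing$, not because of the distance estimate you give.
\end{itemize}
Fix one convention and use it throughout.
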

\begin{proof}
\ref{enum:lm:1cw_vert_nbh:nbh}
Note that each set $[t_i;\eps_i) \times \{\lambda_i\}$ is open in $\Eman \cap \bigl([0;1]\times\{\lambda_{i}\}\bigr)$, being a product of the open sets $[t_i;\eps_i)$ of the interval $[0;1]$ and $\{\lambda_i\}$ of the discrete space $\Lambda$.
Therefore their union $\Uman_x$ is an open subset of $\Eman$.
Moreover, it is saturated with respect to $\relat$, i.e.\ $\Uman_x = \epr^{-1}( \epr(\Uman_x)) = \epr^{-1}(\Vman_x)$.
Therefore, since $\epr$ is a quotient map, $\Vman_x$ is also open.

\ref{enum:lm:1cw_vert_nbh:int_bd}
Note that $\Int{\Eman} = \epr^{-1}\bigl(\epr(\Int{\Eman})\bigr)$ and the restriction $\restr{\epr}{\Int{\Eman}}\colon\Int{\Eman}\to\Acw$ is injective.
Therefore, by Lemma~\ref{lm:qmaps}\ref{enum:lm:qmaps:restr_to_inv_img}, it is a homeomorphism onto its image.
Since $\Int{\Eman}$ is open in $\Eman$, its image $\epr(\Int{\Eman})$ is open, i.e.\ $\restr{\epr}{\Int{\Eman}}$ is an open embedding.

It follows that the set of vertices $\epr(\partial\Eman) = \Acw \setminus \epr(\Int{\Eman})$ is closed in $\Acw$.

It remains to prove that the set of vertices $\epr(\partial\Eman)$ is also strongly discrete.
For each $\lambda\in\Lambda$ choose an arbitrary $\eps_{\lambda} \in (0;1)$; for example, one can put $\eps_{\lambda}=\tfrac{1}{2}$.
Let
\begin{equation}\label{equ:Ux_def}
    \Uman_x = \mathop{\cup}\limits_{i\in\II{x}}( [t_i;\eps_{\lambda_i}) \times \{\lambda_i\} )
\end{equation}
be the corresponding neighbourhood of the vertex $x\in\Acw$.
We claim that
\begin{equation}\label{equ:Ux_cap_Uy_empty}
    \Uman_x\cap \Uman_y=\varnothing \ \text{for distinct vertices} \ x,y\in\Acw.
\end{equation}
Since $\Uman_x$ and $\Uman_y$ are saturated with respect to $\relat$, it will follow from~\eqref{equ:Ux_cap_Uy_empty} that $\Vman_x\cap\Vman_y=\varnothing$.

So suppose that $\Uman_x\cap \Uman_y$ contains some point $(s,\lambda)\in\Eman$, i.e.
\[
    \Uman_x \ \supset \ [t_{x};\eps_{\lambda}) \times\{\lambda\}
    \ \ni \ (s,\lambda) \  \in \
    [t_{y};\eps_{\lambda}) \times\{\lambda\} \ \subset \ \Uman_{y}
\]
for some $t_{x},t_{y}\in\{0,1\}$.
Since $\epr(t_{x},\lambda) = x \ne y=\epr(t_{y},\lambda)$, we have $t_{x}\ne t_{y}$.
Therefore we may assume that $t_{x}=0$ and $t_{y}=1$.
But then $s \in [0;\eps_{\lambda}) \cap [1;\eps_{\lambda})=\varnothing$, which is impossible.
Thus $\Uman_x\cap \Uman_y=\varnothing$ for $x\ne y$.

\ref{enum:lm:1cw_vert_nbh:haus}
We need to show that every two points of $\Acw$ are separable.
Consider three cases.

a) Strong discreteness of the set of vertices shows that every two vertices of $\Acw$ are pairwise separable.

b) Next, by~\ref{enum:lm:1cw_vert_nbh:int_bd}, the union $\epr(\Int{\Eman})$ of all open edges of $\Acw$ is open and Hausdorff (homeomorphic to the Hausdorff space $\Int{\Eman} = (0;1) \times \Lambda$).
Therefore every two points of $\epr(\Int{\Eman})$ are also pairwise separable in $\Acw$.

c) Finally, let $x\in\Acw$ be an arbitrary vertex of $\Acw$ and $y = \epr(s,\lambda)$ an arbitrary point of an open edge of $\Acw$, where $(s,\lambda) \in \Int{\Eman}$.
Choose an arbitrary $\delta>0$ such that $[s-\delta;s+\delta] \subset (0;1)$.
On the other hand, let $\epr^{-1}(x) = \{ (t_i, \lambda_i) \in \partial\Eman \}_{i\in\II{x}}$ be the equivalence class of $\relat$ corresponding to the point $x$.
Choose $\eps_i\in(0;1)$, $i\in\II{x}$, such that the corresponding set $\Uman_x = \mathop{\cup}\limits_{i\in\II{x}} [t_i;\eps_i) \times \{\lambda_i\}$ does not intersect $(s-\delta;s+\delta)\times\{\lambda\}$.
Then $\Vman_x := \epr(\Uman_x)$ and $\Vman_y:=\epr\bigl((s-\delta;s+\delta)\times\{\lambda\}\bigr)$ are open disjoint neighbourhoods of the points $x$ and $y$ in $\Acw$.
\end{proof}
\endgroup

\section{Open embeddings of intervals}\label{sect:open_embeddings_01}
In this section, we present several technical statements concerning open embeddings of intervals.

\subsection{Open embeddings of intervals}\label{sect:open_embeddings_of_intervals}

Let $\Aman,\Aone$ be topological spaces.
\begin{subdefinition}[{\rm\cite[Problem 103, p. 117]{Wilansky:TFA:1970}}]
\label{def:oks_embeddings}
A continuous map $\Achart\colon\Aman\to\Aone$ is called%
\footnote{In~\cite{Williams:PJM:1973} KC"=maps are also called \term{semi-closed}, but in modern works this term is used in a different sense.}
a \term{KC"=map (compact closed)} if for every compact subset $\Kman\subset\Aman$ its image $\Achart(\Kman)$ is closed in $\Aone$.
If $\Achart$ is additionally an open embedding, i.e.\ a homeomorphism onto an open subset $\Achart(\Aman) \subset \Aone$, then $\Achart$ will be called an \term{OKC"=embedding}.
\end{subdefinition}

\begin{subexample}\rm
If $\Aone$ is Hausdorff, then every map $\Achart\colon\Aman\to\Aone$ is a KC"=map.
More generally, by Lemma~\ref{lm:ahaus}\ref{enum:lm:ahaus:compact}, every map $\Achart\colon\Aman\to\Aone$ whose image lies in the set $\Ahaus$ of Hausdorff points of $\Aone$ is also a KC"=map.
\end{subexample}

Let $\Kman_{a,b}$ be a segment with endpoints $a<b\in\bR$, i.e.\ $(a;b) \subseteq \Kman_{a,b} \subseteq [a;b]$, and let $\Achart\colon\Kman_{a,b}\to\Aone$ be a continuous map.
For each $t\in(a;b)$ the sets $\Aman_{t} = \Achart\bigl((a;t]\bigr)$ and $\Bman_{t} = \Achart\bigl([t;b)\bigr)$ will be called the \term{left} and \term{right $t$"=edges of $\Achart$}, and the sets $\lms[t]{a}:=\Cl{\Aman_{t}}\setminus\Aman_{t}$ and $\lms[t]{b}:=\Cl{\Bman_{t}}\setminus\Bman_{t}$ the \term{left} and \term{right $t$"=limit sets of $\Achart$}.
Note that these sets depend only on the restriction $\restr{\Achart}{(a;b)}$, but later it will be convenient to consider such sets also for maps of segments, not only of open intervals.

We are interested in conditions under which an OKC"=embedding of an open interval $\Achart\colon(a;b)\to\Aone$ extends continuously to the point $a$ or $b$, see~\cite{FoxArtin:AM:1948}.

\begin{subexample}
\rm Consider the following subsets of $\bR^{2}$:
\begin{align*}
    \Gamma &= \bigl\{ (x,\sin\tfrac{1}{x}) \mid x\in(0;1] \bigr\},&
    \Kman  &= 0\times [-1;1],&
    \Aone  &= \Gamma\cup \Kman.
\end{align*}
Then $\Achart\colon(0;1)\to\Aone$, $\Achart(x) = (x,\sin\tfrac{1}{x})$, is an open embedding, all left limit sets $\alpha_t$ coincide with $\Kman$, and all right limit sets $\beta_t$ coincide with the point $(1,\sin(1))\in \Gamma$.
In particular, $\Achart$ extends continuously to the point $1$ by the formula $\Achart(1) = (1,\sin(1))$, but has no continuous extension to the point $0$.
Note also that \emph{for every point $(0,t)\in\Kman$ and all sufficiently small neighbourhoods $\Wman$ of it in $\bR^{2}$, the intersection $\Wman\cap\Achart\bigl((0;1)\bigr)$ is disconnected.}
Lemma~\ref{lm:okc_emb} below generalizes Lemma 2.4 from~\cite{MaksymenkoPolulyakh:MFAT:2016} and shows (see statement~\ref{enum:lm:okc_emb:cont_ext}) that the latter property is precisely the obstruction to a continuous extension of $\Achart$ to the point $0$.
\end{subexample}

\begin{sublemma}[{\rm see~\cite[Lemma 2.4]{MaksymenkoPolulyakh:MFAT:2016}}]
\label{lm:ok_emb}
Let $\Aone$ be a topological space, $\Achart\colon(0;1) \to \Aone$ an open embedding and $\Acomp = \Achart\bigl((0;1)\bigr)$ its image.
Then for every $s\in(0;1)$
\begin{equation}\label{equ:clJ_d0_J_d1}
    \Cl{\Acomp}\setminus\Acomp = \lms[s]{0} \cup \lms[s]{1}.
\end{equation}
\end{sublemma}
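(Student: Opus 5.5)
We argue by double inclusion, using only that $\Achart$ is an open embedding, so that $\Acomp$ is open and $\Achart$ is a homeomorphism onto it. Fix $s\in(0;1)$ and write $\Aman_s=\Achart\bigl((0;s]\bigr)$ and $\Bman_s=\Achart\bigl([s;1)\bigr)$. Then $\Acomp=\Aman_s\cup\Bman_s$, and since closure commutes with finite unions, $\Cl{\Acomp}=\Cl{\Aman_s}\cup\Cl{\Bman_s}$.

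For the inclusion $\supseteq$, take $x\in\lms[s]{0}=\Cl{\Aman_s}\setminus\Aman_s$. Then $x\in\Cl{\Acomp}$, and we must show $x\notin\Acomp$. Suppose instead that $x=\Achart(t)$ for some $t\in(0;1)$. Since $x\notin\Aman_s$, we have $t>s$. The set $\Achart\bigl((s;1)\bigr)$ is open in $\Acomp$, hence open in $\Aone$ because $\Acomp$ is open. It is a neighbourhood of $x$ disjoint from $\Aman_s$, which contradicts $x\in\Cl{\Aman_s}$. The argument for $\lms[s]{1}$ is symmetric, using the neighbourhood $\Achart\bigl((0;s)\bigr)$.

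For the inclusion $\subseteq$, take $x\in\Cl{\Acomp}\setminus\Acomp$. Then $x$ lies in $\Cl{\Aman_s}$ or in $\Cl{\Bman_s}$, and $x\notin\Aman_s\cup\Bman_s$ because $x\notin\Acomp$. Hence $x\in\lms[s]{0}\cup\lms[s]{1}$.

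The only point needing care is that the images of open subintervals are open in $\Aone$, not merely in $\Acomp$. This is exactly where openness of the embedding is used, and without it the $\supseteq$ inclusion could fail. Everything else is formal set manipulation.
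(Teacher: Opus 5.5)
Your proof is correct and follows essentially the same approach as the paper. Both hinge on the fact that $\Achart\bigl((s;1)\bigr)$ is open in $\Aone$ and disjoint from $\Aman_s$, hence disjoint from $\Cl{\Aman_s}$, combined with $\Cl{\Acomp}=\Cl{\Aman_s}\cup\Cl{\Bman_s}$; the paper merely packages this as the identities $\Cl{\Aman_s}\setminus\Acomp=\lms[s]{0}$ and $\Cl{\Bman_s}\setminus\Acomp=\lms[s]{1}$ rather than as a double inclusion.
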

\begin{proof}
Since $\Achart$ is an open embedding, the set $\Achart\bigl((s;1)\bigr)$ is open in $\Aone$ and does not intersect $\Aman_s = \Achart\bigl((0;s]\bigr)$.
Therefore it also does not intersect the closure $\Cl{\Aman_s}$:
\begin{equation}\label{equ:ClAs_Achart_s1_empty}
    \Cl{\Aman_s} \cap \Achart\bigl((s;1)\bigr) = \varnothing.
\end{equation}
Hence
\[
    \Cl{\Aman_s} \setminus \Acomp =
    \Cl{\Aman_s} \setminus
        \Bigl(
            \underbrace{\Achart\bigl((0;s]\bigr)}_{\Aman_s}
                \ \sqcup \
            \Achart\bigl((s;1)\bigr)
        \Bigr) =
     \Cl{\Aman_s} \setminus \Aman_s = \lms[s]{0},
\]
and, similarly, $\Cl{\Bman_s} \setminus \Acomp = \lms[s]{1}$.
Since $\Acomp = \Aman_s \cup \Bman_s$, we have $\Cl{\Acomp} = \Cl{\Aman_s} \cup \Cl{\Bman_s}$, and therefore
\[
    \Cl{\Acomp}\setminus\Acomp =
    (\Cl{\Aman_s}\setminus\Acomp) \cup (\Cl{\Bman_s}\setminus\Acomp)
    =
    \lms[s]{0} \cup \lms[s]{1}.
    \qedhere
\]
\end{proof}

\begin{sublemma}\label{lm:okc_emb}
Let $\Aone$ be a topological space, $\Achart\colon(0;1) \to \Aone$ an OKC"=embedding and $\Acomp = \Achart\bigl((0;1)\bigr)$ its image.
For $s<t\in[0;1]$ denote $\AcompImg{s}{t} := \Achart\bigl((s;t)\bigr)$ and $\AcompCImg{s}{t} := \Achart\bigl([s;t]\bigr)$.
\begin{enumerate}[label={\rm(\arabic*)}, leftmargin=*]
\item\label{enum:lm:okc_emb:same_lim_sets}
Then for all $s,t \in (0;1)$
\begin{align}\label{equ:clAs_As}
    \lms[s]{0} &= \Cl{\Aman_{s}} \setminus \Aman_{s}
               = \Cl{\Aman_{t}} \setminus \Aman_{t}
               = \lms[t]{0}, &
    \lms[s]{1} &= \Cl{\Bman_{s}} \setminus \Bman_{s}
               = \Cl{\Bman_{t}} \setminus \Bman_{t}
               = \lms[t]{1},
\end{align}
i.e.\ its $t$"=limit sets $\lms[t]{0}$ and $\lms[t]{1}$ are independent of $t\in(0;1)$.
Denote them by $\lms{0}=\lms{0}(\Achart)$ and $\lms{1}=\lms{1}(\Achart)$, respectively, and call them the \term{left} and \term{right} limit sets of $\Achart$, respectively.

\item\label{enum:lm:okc_emb:intersect_connected}
Suppose there exist a point $x\in\Cl{\Acomp}\setminus\Acomp$, its open neighbourhood $\Vman$, and an open connected component $\Qman$ of $\Vman\setminus\{x\}$ such that $x\in\Cl{\Qman}$ and $\Qman\subset\Acomp$.
Then $\Qman$ coincides with one of the sets $\Acomp=\AcompImg{0}{1}$ or $\Acomp=\AcompImg{0}{t}$ or $\AcompImg{t}{1}$ for some $t\in(0;1)$.

\item\label{enum:lm:okc_emb:cont_ext}
Let $x\in\Aone$.
Then the following conditions are equivalent:
\begin{enumerate}[leftmargin=*, label={\rm(\alph*)}]
\item\label{enum:lm:okc_emb:cont_ext:ext}
$x$ has a basis of open neighbourhoods $\gamma_{x}$ such that for every $\Vman\in\gamma_x$ there exists $t_{\Vman}\in(0;1)$ with $\phi\bigl((0;t_{\Vman})\bigr) \subset \Vman\cap\Acomp$;
\item\label{enum:lm:okc_emb:cont_ext:conn_nbh}
the extension $\bar{\phi}\colon[0;1) \to \Aone$ of $\phi$ to the point $0 \in [0;1)$ defined by the rule $\bar{\phi}(0) = x$ is continuous.
\end{enumerate}
If these conditions hold, then $\lms{0} \subset \hcl{x}$.
An analogous statement holds for $\lms{1}$.
\end{enumerate}
\end{sublemma}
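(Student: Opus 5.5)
Throughout, $\Aman_s=\Achart\bigl((0;s]\bigr)$ and $\Bman_s=\Achart\bigl([s;1)\bigr)$ as before, and we treat the three statements in order, using the OKC property to control the closures of images of compact subsegments.

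\emph{Statement~\ref{enum:lm:okc_emb:same_lim_sets}.} Let $s<t$ in $(0;1)$. Then $\Aman_t = \Aman_s\cup\AcompCImg{s}{t}$. Since $[s;t]$ is compact and $\Achart$ is a KC-map, $\AcompCImg{s}{t}$ is closed. Hence
\[
  \Cl{\Aman_t}=\Cl{\Aman_s}\cup\AcompCImg{s}{t}.
\]
By \eqref{equ:ClAs_Achart_s1_empty}, $\Cl{\Aman_s}$ misses $\Achart\bigl((s;1)\bigr)$. Therefore
\[
  \Cl{\Aman_t}\setminus\Aman_t=\Cl{\Aman_s}\setminus\Aman_s .
\]
The argument for $\lms[t]{1}$ is symmetric.

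\emph{Statement~\ref{enum:lm:okc_emb:intersect_connected}.} The set $\Qman$ is a connected open subset of $\Acomp\cong(0;1)$, so $\Qman=\AcompImg{a}{b}$ for some $0\le a<b\le 1$. We must exclude $0<a<b<1$. In that case $\Cl{\Qman}\subset\AcompCImg{a}{b}\subset\Acomp$, since $\AcompCImg{a}{b}$ is closed by the KC property. This contradicts $x\in\Cl{\Qman}$ and $x\notin\Acomp$. So $a=0$ or $b=1$, which gives exactly the three listed forms.

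\emph{Statement~\ref{enum:lm:okc_emb:cont_ext}.} Continuity of $\bar\phi$ is needed only at $0$, where it means: for every neighbourhood $\Vman$ of $x$ there is $t$ with $\phi\bigl((0;t)\bigr)\subset\Vman$.
\begin{itemize}
\item (b)$\Rightarrow$(a): take $\gamma_x$ to be all open neighbourhoods of $x$.
\item (a)$\Rightarrow$(b): given any neighbourhood, pick $\Vman\in\gamma_x$ inside it and use $t_\Vman$.
\end{itemize}
For the inclusion $\lms{0}\subset\hcl{x}$, let $y\in\lms{0}$ and let $\Wman\ni y$, $\Vman\ni x$ be open. By continuity of $\bar\phi$ there is $t$ with $\Aman_{t'}\subset\Vman$ for some $t'<t$. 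By part~\ref{enum:lm:okc_emb:same_lim_sets}, $y\in\lms[t']{0}\subset\Cl{\Aman_{t'}}$, so $\Wman$ meets $\Aman_{t'}\subset\Vman$. Thus $y$ and $x$ are inseparable, and Lemma~\ref{lm:hcl_props} gives $y\in\hcl{x}$. The case of $\lms{1}$ is symmetric.

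The step that needs the most care is~\ref{enum:lm:okc_emb:intersect_connected}, where the KC hypothesis is essential to rule out a component lying strictly inside the interval. The rest is routine.
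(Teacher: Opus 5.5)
Your proposal is correct and follows the paper's proof essentially step for step: the KC property makes $\AcompCImg{s}{t}$ closed, and together with \eqref{equ:ClAs_Achart_s1_empty} this gives the independence of the limit sets and excludes components $\AcompImg{a}{b}$ with $[a;b]\subset(0;1)$, while $\lms{0}\subset\hcl{x}$ follows from $y\in\Cl{\Aman_{t'}}$ with $\Aman_{t'}\subset\Vman$. Your part~(2) is slightly leaner than the paper's, since it needs only $\Cl{\Qman}\subset\AcompCImg{a}{b}\subset\Acomp$ rather than the equality $\Cl{\Qman}=\AcompCImg{a}{b}$.
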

\begin{proof}
\ref{enum:lm:okc_emb:same_lim_sets}
\newcommand\stHalf{\Hman} 
\newcommand\stComp{\Kman} 
For $s<t \in (0;1)$ consider the sets $\stHalf = \Achart\bigl((s;t]\bigr)$ and $\stComp = \Achart\bigl([s;t]\bigr)$, see Fig.~\ref{fig:lr_edges}.
\begin{figure}[htbp!]
\includegraphics[height=2cm]{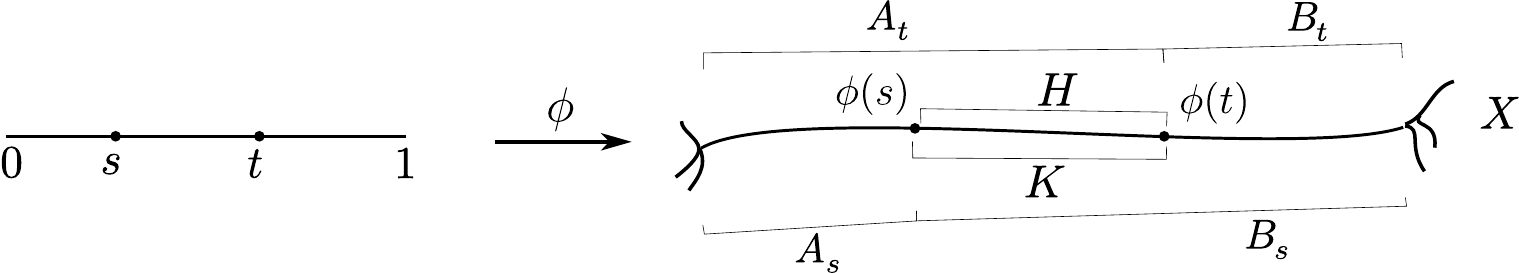}
\caption{Embedding $\Achart$}\label{fig:lr_edges}
\end{figure}

We show that
\begin{equation}\label{equ:clJ_K}
    \Cl{\stHalf} = \stComp.
\end{equation}
Indeed, since $\Achart$ is a KC"=map, the compact set $\stComp$ is closed in $\Aone$, and therefore
\[
    \stComp := \Achart\bigl( [s;t] \bigr)
    \equiv \Achart\bigl( \Cl{(s;t]} \bigr)
    \subset \Cl{ \Achart\bigl( (s;t] \bigr) }
    \equiv \Cl{\stHalf}
    \subset \Cl{\stComp} = \stComp.
\]
Hence
\[
    \Cl{\Aman_t} =
    \Cl{\Aman_s \sqcup \stHalf} =
    \Cl{\Aman_s} \cup \Cl{\stHalf}
        \stackrel{\eqref{equ:clJ_K}}{=}
    \Cl{\Aman_s} \cup \stComp =
    \Cl{\Aman_s} \cup \bigl(\{\Achart(s)\} \sqcup \stHalf\bigr)
        \stackrel{\eqref{equ:ClAs_Achart_s1_empty}}{=}
    \Cl{\Aman_s} \sqcup \stHalf,
\]
and therefore
\[
    \lms[t]{0} = \Cl{\Aman_t}\setminus \Aman_t =
    (\Cl{\Aman_s} \sqcup \stHalf) \setminus (\Aman_s \sqcup \stHalf) =
    \Cl{\Aman_s} \setminus \Aman_s = \lms[s]{0}.
\]
The proof that $\lms[s]{1}=\lms[t]{1}$ is analogous.

\ref{enum:lm:okc_emb:intersect_connected}
Since $\Qman \subset \Acomp$ is connected and open, it has the form $\Qman\equiv\Achart\bigl((s;t)\bigr)=\AcompImg{s}{t}$ for some $s<t\in[0;1]$.
We need to show that $s=0$ or $t=1$.

Indeed, if $[s;t]\subset(0;1)$, then the compact subset $\AcompCImg{s}{t} \subset \Acomp \subset \Aone$ must be closed in $\Aone$.
But then $\Cl{\Qman} = \AcompCImg{s}{t}$, and hence
\[
    x \in
    \Cl{\Qman}\setminus\Qman =
    \AcompCImg{s}{t}\setminus\AcompImg{s}{t} =
    \{s,t\} \subset
    \Acomp,
\]
which contradicts the assumption that $x\notin\Acomp$.

\ref{enum:lm:okc_emb:cont_ext}
Let $x\in\Aone$ and $\bar{\phi}\colon[0;1) \to \Aone$ be the extension of $\phi$ to the point $0 \in [0;1)$ defined by the rule $\bar{\phi}(0) = x$.
Then the equivalence
\ref{enum:lm:okc_emb:cont_ext:ext}
$\Leftrightarrow$
\ref{enum:lm:okc_emb:cont_ext:conn_nbh} is trivial and follows directly from the definition of continuity of $\bar{\phi}$ at $0$.

Assume that condition~\ref{enum:lm:okc_emb:cont_ext:ext} holds.
We show that then $\lms{0}\subset\hcl{x}$.

Indeed, let $y\in\lms{0}$ be an arbitrary point, $\Wman$ an arbitrary open neighbourhood of it, and $\Vman\in\gamma_{x}$ an arbitrary element of the basis $\gamma_{x}$.

Then, by~\ref{enum:lm:okc_emb:cont_ext:ext}, we have $\phi\bigl((0;t)\bigr) \subset \Vman \cap \Acomp$ for some $t\in(0;1)$.
Since
\[
    y \in \lms{0}
        = \lms[t/2]{0}  \subset  \overline{\Aman_{t/2}}
        \subset \overline{\phi\bigl((0;t)\bigr)}
        \subset \overline{\Vman \cap \Acomp}
\]
we have $\Wman\cap\bigl(\Vman \cap \Acomp\bigr) \ne\varnothing$.
In particular, $\Wman\cap\Vman\ne\varnothing$, so $x$ and $y$ are inseparable, i.e.\ $y\in\hcl{x}$.
\end{proof}

\begin{sublemma}\label{lm:Jint}
Let $\Aone$ be a one-dimensional manifold, $x$ an isolated point of the set $\Abr\cup\partial\Aone$, and $\Vman$ a standard neighbourhood of $x$ that is isolating with respect to $\Abr\cup\partial\Aone$.
Let further
\[
    \Achart\colon(0;1)\to\Ahaus\setminus\partial\Aone
\]
be an open embedding such that its image $\Acomp=\Achart\bigl((0;1)\bigr)$ is a connected component of $\Ahaus\setminus\partial\Aone$.
For $s<t\in[0;1]$ denote $\AcompImg{s}{t} := \Achart\bigl((s;t)\bigr)$.

\begin{enumerate}[label={\rm(\arabic*)}, leftmargin=*]
\item\label{enum:lm:Jint:Qcomp}
If some connected component $\Qman$ of $\Vman\setminus\{x\}$ intersects $\Acomp$, then $\Qman$ coincides with one of the sets $\Acomp=\AcompImg{0}{1}$, $\AcompImg{0}{t}$ or $\AcompImg{t}{1}$ for some $t\in[0;1]$.

\item\label{enum:lm:Jint:NWJ:0}
If $x\in\lms{0}(\Achart)$, then there exists $t\in(0;1]$ such that $\AcompImg{0}{t}$ is a connected component of $\Vman\setminus\{x\}$.
Conversely, if $\AcompImg{0}{t}$ is a connected component of $\Vman\setminus\{x\}$ for some $t\in(0;1)$, then $x\in\lms{0}(\Achart)$.

\item\label{enum:lm:Jint:NWJ:1}
Similarly, if $x\in\lms{1}(\Achart)$, then $\AcompImg{t}{1}$ is a connected component of $\Vman\setminus\{x\}$ for some $t\in[0;1)$.
Conversely, if $\AcompImg{t}{1}$ is a connected component of $\Vman\setminus\{x\}$ for some $t\in(0;1)$, then $x\in\lms{1}(\Achart)$.

\item\label{enum:lm:Jint:NWJ:no}
The following conditions are equivalent:
\begin{enumerate}[label={\rm(\alph*)}]
\item $x\notin\lms{0}(\Achart)\cup\lms{1}(\Achart)$;
\item $x\notin\Cl{\Acomp}$;
\item $\Vman\cap\Acomp=\varnothing$.
\end{enumerate}

\item\label{enum:lm:Jint:ext}
If $x\in\lms{i}(\Achart)$ for some $i\in\{0,1\}$, then $\lms{i}(\Achart) \subset \hcl{x}$, and $\Achart$ extends continuously to the point $i\in[0;1]$ by the formula $\Achart(i) = x$.
\end{enumerate}
\end{sublemma}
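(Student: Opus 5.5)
The key preliminary observation is that $\Achart$ is an OKC"=embedding. Its image lies in $\Ahaus$, so by Lemma~\ref{lm:ahaus}\ref{enum:lm:ahaus:compact} every compact $\Achart([s;t])$ is closed. Hence Lemmas~\ref{lm:ok_emb} and~\ref{lm:okc_emb} apply to $\Achart$, and we may use the limit sets $\lms{0},\lms{1}$ independently of $t$.

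\emph{Structure of $\Vman$.} Since $\Vman$ is isolating, $\Vman\setminus\{x\}\subset\Ahaus\setminus\partial\Aone$. As $\Vman$ is standard, $\Vman\setminus\{x\}$ has one or two components, each homeomorphic to an open interval, connected, and with $x$ in its closure.

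\emph{Statement \ref{enum:lm:Jint:Qcomp}.} Each component $\Qman$ of $\Vman\setminus\{x\}$ is a connected subset of $\Ahaus\setminus\partial\Aone$. If it meets $\Acomp$, then $\Qman\subset\Acomp$, because $\Acomp$ is a component. $\Qman$ is open, and $x\in\Cl{\Qman}$. If $x\in\Acomp$, then $x$ would be a Hausdorff interior point, contradicting $x\in\Abr\cup\partial\Aone$. Hence $x\in\Cl{\Acomp}\setminus\Acomp$. Lemma~\ref{lm:okc_emb}\ref{enum:lm:okc_emb:intersect_connected} then gives that $\Qman$ is one of $\AcompImg{0}{1}$, $\AcompImg{0}{t}$ or $\AcompImg{t}{1}$.

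\emph{Statements \ref{enum:lm:Jint:NWJ:0}, \ref{enum:lm:Jint:NWJ:1}.} Suppose $x\in\lms{0}$, so $x\in\Cl{\Achart((0;s])}$ for every $s$. Then $\Vman$ meets $\Achart((0;s))$ for every $s$. Choose a component $\Qman$ of $\Vman\setminus\{x\}$ meeting $\Acomp$; there are at most two candidates. By \ref{enum:lm:Jint:Qcomp}, $\Qman$ is either $\AcompImg{0}{t}$, $\AcompImg{t}{1}$ or $\Acomp$.

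If every component meeting $\Acomp$ were of the form $\AcompImg{t}{1}$ with $t>0$, then $\Vman\cap\Achart((0;t'))=\varnothing$ for $t'$ smaller than all such $t$. That contradicts the previous paragraph, so some component is $\AcompImg{0}{t}$ with $t\in(0;1]$.

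For the converse, suppose $\AcompImg{0}{t}$ with $t<1$ is a component with $x\in\Cl{\AcompImg{0}{t}}$. Note $x\notin\lms{1}$: otherwise $x$ would lie in $\Cl{\Achart([t;1))}$, which equals $\Achart([t;1))\cup\lms{1}$, and that is impossible. Using $\Cl{\Acomp}\setminus\Acomp=\lms{0}\cup\lms{1}$ and Lemma~\ref{lm:okc_emb}\ref{enum:lm:okc_emb:same_lim_sets}, $x\in\Cl{\Achart((0;t/2])}$ since the rest $\Achart([t/2;t))$ has closure inside $\Acomp$, so $x\in\lms{0}$. This needs some care. A cleaner route: $x\in\Cl{\AcompImg{0}{t}}\setminus\Acomp\subset\Cl{\Achart((0;t])}\setminus\Achart((0;t])=\lms[t]{0}=\lms{0}$. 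Statement \ref{enum:lm:Jint:NWJ:1} is symmetric.

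\emph{Statement \ref{enum:lm:Jint:NWJ:no}.} The equivalence (a)$\Leftrightarrow$(b) follows from \eqref{equ:clJ_d0_J_d1} together with $x\notin\Acomp$. (c)$\Rightarrow$(b) is immediate since $\Vman$ is a neighbourhood of $x$. For (b)$\Rightarrow$(c): if $\Vman\cap\Acomp\neq\varnothing$, some component $\Qman$ meets $\Acomp$, so $\Qman\subset\Acomp$ and $x\in\Cl{\Qman}\subset\Cl{\Acomp}$.

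\emph{Statement \ref{enum:lm:Jint:ext}.} This is the main step. Take $i=0$, so by \ref{enum:lm:Jint:NWJ:0} the set $\AcompImg{0}{t}$ is a component of $\Vman\setminus\{x\}$. Standard neighbourhoods $\Vman'\subset\Vman$ of $x$ (images of small symmetric intervals under a chart) form a basis at $x$, and each is again isolating. The component of $\Vman'\setminus\{x\}$ lying in $\AcompImg{0}{t}$ is again of the form $\AcompImg{0}{t'}$, by applying \ref{enum:lm:Jint:NWJ:0} to $\Vman'$ and noting it is contained in $\AcompImg{0}{t}$, which excludes the forms $\AcompImg{t'}{1}$. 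Thus condition~\ref{enum:lm:okc_emb:cont_ext:ext} of Lemma~\ref{lm:okc_emb} holds. Lemma~\ref{lm:okc_emb}\ref{enum:lm:okc_emb:cont_ext} then yields the continuous extension $\Achart(0)=x$ and $\lms{0}\subset\hcl{x}$.

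The delicate point is the edge case $t=1$, where the whole of $\Acomp$ is a component of $\Vman\setminus\{x\}$. There $x$ may lie in both $\lms{0}$ and $\lms{1}$, and the nested-component argument must still ensure that the smaller component has the form $\AcompImg{0}{t'}$. This should follow because $\Vman'\setminus\{x\}$ has its relevant component adjacent to $x$ from the side corresponding to $0$.
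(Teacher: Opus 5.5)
Most of your proposal follows the paper's proof. $\Achart$ is an OKC-embedding by Lemma~\ref{lm:ahaus}\ref{enum:lm:ahaus:compact}, statement~\ref{enum:lm:Jint:Qcomp} comes from Lemma~\ref{lm:okc_emb}\ref{enum:lm:okc_emb:intersect_connected}, the first halves of \ref{enum:lm:Jint:NWJ:0} and \ref{enum:lm:Jint:NWJ:1} come from \ref{enum:lm:Jint:Qcomp}, and \ref{enum:lm:Jint:ext} comes from Lemma~\ref{lm:okc_emb}\ref{enum:lm:okc_emb:cont_ext}. Two of your steps are actually shorter than the paper's. Your converse in \ref{enum:lm:Jint:NWJ:0}, via $x\in\Cl{\AcompImg{0}{t}}\setminus\Acomp\subset\Cl{\Achart((0;t])}\setminus\Achart((0;t])=\lms[t]{0}=\lms{0}$, replaces the paper's nested-neighbourhood argument. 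Your direct proof of (b)$\Rightarrow$(c) in \ref{enum:lm:Jint:NWJ:no} avoids the detour through \ref{enum:lm:Jint:NWJ:0} and \ref{enum:lm:Jint:NWJ:1}.

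The one unfinished point is \ref{enum:lm:Jint:ext}. When $t=1$, i.e.\ $\Acomp$ is itself a component of $\Vman\setminus\{x\}$, your exclusion argument breaks down: $\AcompImg{0}{1}=\Acomp$ contains every $\AcompImg{t'}{1}$. You then only say the conclusion ``should follow''. This difficulty is self-inflicted, because you never need to know \emph{which} component of $\Vman'\setminus\{x\}$ lies inside $\AcompImg{0}{t}$. Each $\Vman'$ in your basis is itself a standard neighbourhood of $x$ that is isolating with respect to $\Abr\cup\partial\Aone$, and $x\in\lms{0}$. So the first half of \ref{enum:lm:Jint:NWJ:0}, applied to $\Vman'$ instead of $\Vman$, gives some $t'\in(0;1]$ with $\AcompImg{0}{t'}$ a component of $\Vman'\setminus\{x\}$. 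Hence $\Achart\bigl((0;t'/2)\bigr)\subset\Vman'\cap\Acomp$. That is condition~\ref{enum:lm:okc_emb:cont_ext:ext} of Lemma~\ref{lm:okc_emb}\ref{enum:lm:okc_emb:cont_ext} for every element of the basis, with no case split on $t$. This is exactly how the paper concludes, and your restriction to a basis of standard isolating neighbourhoods is the correct way to make the paper's ``for every open neighbourhood'' precise.

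Also delete the aside ``Note $x\notin\lms{1}$'' in the converse of \ref{enum:lm:Jint:NWJ:0}, because it is false. Take $S^1$ with one point doubled, let $x$ be one of the two copies, and let $\Acomp$ be the complement of both copies. A small interval $\Vman$ around $x$ then has $\Vman\setminus\{x\}=\AcompImg{0}{t}\sqcup\AcompImg{t''}{1}$ with $t<t''<1$, and $x\in\lms{0}\cap\lms{1}$. Your ``cleaner route'' does not use this claim, so the converse itself stands.
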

\begin{proof}
First note that since the image of $\Achart$ lies in $\Ahaus$, it follows from Lemma~\ref{lm:ahaus}\ref{enum:lm:ahaus:compact} that $\Achart$ is an OKC"=map.
Therefore its left and right limit sets $\lms{0}(\Achart)$ and $\lms{1}(\Achart)$ are well-defined, and we will denote them by $\lms{0}$ and $\lms{1}$, respectively.

Recall also that ``standard'' means that $\Vman$ is homeomorphic either to $[0;1)$ or to $(-1;1)$.
Therefore $\Vman\setminus\{x\}$ has, respectively, one or two connected components, each of which is homeomorphic to an open interval and contains $x$ in its closure.

On the other hand, the condition that $\Vman$ is ``isolating'' for $x$ with respect to the set $\Abr\cup\partial\Aone$ means that $\Vman\cap(\Abr\cup\partial\Aone)=\{x\}$.
Therefore each connected component $\Qman$ of $\Vman\setminus\{x\}$ must lie in some connected component of $\Ahaus\setminus\partial\Aone = \Aone\setminus(\Abr\cup\partial\Aone)$.

\ref{enum:lm:Jint:Qcomp}
Since $\Qman\cap\Acomp\ne\varnothing$, we have $\Qman\subset\Acomp$.
Now our statement follows from Lemma~\ref{lm:okc_emb}\ref{enum:lm:okc_emb:intersect_connected}.

\ref{enum:lm:Jint:NWJ:0}.
Assume that $x\in\lms{0}$.
Since
\[
    \lms{0} = \lms[\tau]{0}(\Achart) = \Cl{\Achart\bigl((0;\tau]\bigr)}\setminus\Achart\bigl((0;\tau]\bigr)
\]
for all $\tau\in(0;1)$, the point $x$ belongs to the closure of each set $\Achart\bigl((0;\tau]\bigr)\subset\Acomp$ for all $\tau>0$, and therefore $\Vman$ intersects all such sets.
Then from~\ref{enum:lm:Jint:Qcomp} it follows that $\Vman\setminus\{x\}$ has a connected component of the form $\AcompImg{0}{t}$ for some $t\in(0;1]$.

Conversely, suppose that for some $t\in(0;1)$ the set $\Qman := \AcompImg{0}{t}$ is a connected component of $\Vman\setminus\{x\}$.
We show that then $x\in\lms{0} = \lms[\tau]{0}(\Achart)$ for some $\tau\in(0;1]$.

First suppose that $\Vman$ is homeomorphic to an open interval and let $\Qman_0, \Qman_1$ be the connected components of $\Vman\setminus\{x\}$.
Let further $\Vman'$ be an arbitrary neighbourhood of $x$ also homeomorphic to an open interval.
Then the intersections $\Qman'_i:=\Vman' \cap \Qman_i$, $i=0,1$, are the connected components of $\Vman'\setminus\{x\}$.
In particular, $\Qman' \subset \Qman \subset \Acomp$, and therefore $\Qman' = \AcompImg{s'}{t'} \subset \AcompImg{0}{t}$ for some $s',t'$.
By the previous, either $s'=0$ or $t'=1$.
But $t'\leq t <1$, so $s'=0$, i.e.\ $\Qman' = \AcompImg{0}{t'}$ and hence it intersects $\Achart\bigl((0;\tau]\bigr)$.
Therefore $x\in\Cl{\Achart\bigl((0;\tau]\bigr)}\setminus\Achart\bigl((0;\tau]\bigr) = \lms[\tau]{0}(\Achart) = \lms{0}$.

The case when $\Vman$ is homeomorphic to $[0;1)$ is similar.
This proves~\ref{enum:lm:Jint:NWJ:0}.

Statement~\ref{enum:lm:Jint:NWJ:1} is analogous to~\ref{enum:lm:Jint:NWJ:0}.

\ref{enum:lm:Jint:NWJ:no}
Clearly, (c)$\Rightarrow$(b).
Since $\Cl{\Acomp}\setminus\Acomp = \lms{0}\cup\lms{1}$, see~\eqref{equ:clJ_d0_J_d1}, we have (b)$\Rightarrow$(a).
But $x\notin\Acomp$, therefore (a)$\Rightarrow$(b) as well.

(a)$\Rightarrow$(c)
Assume that (c) does not hold, i.e.\ $\Vman\cap\Acomp\ne\varnothing$.
Then, as shown above, this intersection $\Vman\cap\Acomp$ is one or both components of the complement $\Vman\setminus\{x\}$.
Therefore, by \ref{enum:lm:Jint:NWJ:0} or \ref{enum:lm:Jint:NWJ:1}, $x\in\lms{0}\cup\lms{1}$, i.e.\ (a) is also violated.

\ref{enum:lm:Jint:ext}
For definiteness, assume that $x\in\lms{0}$.
Then by~\ref{enum:lm:Jint:NWJ:0}, for every open neighbourhood $\Vman$ of $x$ its intersection with $\Acomp$ contains $\Achart\bigl((0;t)\bigr)$ for some $t\in(0;1)$.
Therefore the continuity of the extension of $\Achart$ to the point $0\in[0;1)$ by the formula $\Achart(0)=x$ and the property $\lms{0}\subset\hcl{x}$ follow from Lemma~\ref{lm:okc_emb}\ref{enum:lm:okc_emb:cont_ext}.
\end{proof}

\section{Proof of Theorem~\ref{th:1cw_struct}}\label{sect:proof:th:1cw_struct}

Let $\Aone$ be a connected one-dimensional manifold not homeomorphic to the circle $S^1$, $\Abr$ the set of its branch points, $\Ahaus = \Aone\setminus\Abr$ the set of Hausdorff points, and $\apr\colon\Aone\to\Xone$ the quotient map onto the space of \trm{chain inseparability} classes of the points of $\Aone$,
\begin{align*}
\Avrt &= \Abr\cup\partial\Aone,&
\Xvrt &= \apr(\Avrt), \\
\Aedg &= \Aone\setminus\Avrt = \Ahaus\setminus\partial\Aone, &
\Xedg &= \apr(\Aedg) = \Xone\setminus\Xvrt.
\end{align*}

We need to verify the equivalence of the following conditions:
\begin{enumerate}
\item[\ref{enum:th:1cw_struct:loc}]
$\Abr$ is locally finite, and each connected component of $\Aedg$ has a countable base;
\item[\ref{enum:th:1cw_struct:1cw}]
$\Abr$ is nowhere dense, and $\Xone$ has the structure of a topological graph for which $\Xvrt$ is the set of vertices (respectively, the connected components of $\Xedg$ are the open edges of $\Xone$).
\end{enumerate}

\subsection*{Proof of the implication \ref{enum:th:1cw_struct:1cw}$\Rightarrow$\ref{enum:th:1cw_struct:loc}}
Condition~\ref{enum:th:1cw_struct:1cw} means that there exist a discrete topological space $\Lambda$, a subset $\EmptyClass \subset \{0,1\}\times\Lambda$, and a surjective quotient map $\func\colon\OEman = ([0;1]\times\Lambda)\setminus\EmptyClass \to \Xone$ such that $\func(\partial\OEman) = \Xvrt$, $\func(\Int{\OEman}) = \Xedg$, and the restriction map $\restr{\func}{\Int{\OEman}}\colon\Int{\OEman}\to\Xedg$ is a homeomorphism.
In particular, we have the following two diagrams:
\begin{align*}
    &
    \xymatrix{
        & \Avrt \ar[d]^-{\apr} \\
        \partial\OEman \ar[r]^-{\func} & \Xvrt
    }
    &&
    \xymatrix{
        & \Aedg \ar[d]^-{\apr} \\
        \Int{\OEman} = (0;1)\times\Lambda \ar[r]^-{\func} \ar@{-->}[ur] & \Xedg
    }
\end{align*}
where in the left diagram both arrows are surjective, while in the right one $\restr{\func}{\Int{\OEman}}$ is a homeomorphism and $\restr{\apr}{\Aedg}$ is a continuous bijection.

1) Let us show that \term{$\restr{\apr}{\Aedg}$ is also a homeomorphism}.
Then we obtain a homeomorphism
\[
    \apr^{-1}\circ\func \colon \Int{\OEman} = (0;1)\times\Lambda \to \Aedg.
\]
In particular, each connected component of $\Aedg$ will be homeomorphic to an open interval, and hence will have a countable base.

Since $\func$ is a quotient map and $\Int{\OEman} = \func^{-1}(\Xedg)$ is open in $\OEman$, the set $\Xedg$ is open in $\Xone$, and hence $\Aedg = \apr^{-1}(\Xedg)$ is open in $\Aone$.
Then, by Lemma~\ref{lm:qmaps}\ref{enum:lm:qmaps:restr_to_inv_img}, the restriction map $\restr{\apr}{\Aedg}\colon\Aedg\to\Xedg$ is a quotient map.
Since it is a continuous bijection, it is a homeomorphism.

2) Let us prove that \term{$\Abr$ is a locally finite set}.
We show that even $\Avrt = \Abr\cup\partial\Aone$ is locally finite, i.e.\ \term{closed and discrete} (see Lemma~\ref{lm:T1_loc_fin__closed_discr}).

As noted in 1), the set $\Aedg$ is open, and therefore $\Avrt = \Aone\setminus\Aedg$ is closed.
Let us verify the discreteness of $\Avrt$.

2.1)
First, note that by condition~\ref{enum:th:1cw_struct:1cw}, $\Abr$ is nowhere dense.
On the other hand, $\partial\Aone$ is also nowhere dense, and therefore $\Avrt$ is a nowhere dense \term{closed} subset of $\Aone$.

2.2)
Suppose that $\Avrt$ is not discrete, i.e.\ it has a non-isolated point $x$ in the induced topology.
We show that this contradicts the fact that $\Xone$ has the structure of a topological graph compatible with $\cbr$.

Let $\Acomp = (-1;1)$ or $[0;1)$ and $\Achart\colon\Acomp\to\Aone$ be an open embedding such that $\Achart(0)=x$, i.e.\ $\Wman:=\Achart(\Acomp)$ is a standard open neighbourhood of $x$.
Since $x$ is not isolated in $\Avrt$, there exists a sequence of pairwise distinct points $\{x_i\}_{i\in\bN}\subset\Wman\cap(\Avrt\setminus\{x\})$ converging to $x$.
Denote $a_i = \phi^{-1}(x_i)$, $i\in\bN$.
Without loss of generality, we may assume that $\{a_i\}_{i\in\bN}$ is a strictly monotonically decreasing sequence of positive numbers converging to $0$.

Since $\Avrt$ is nowhere dense, each of the intervals $(a_{i+1};a_i)$ must contain a point $b_{i}$ that does not belong to $\Avrt$.
Let $\Bman_i \subset \Wman$ be the connected component of $\Wman\setminus\Avrt=\Wman\cap\Aedg$ containing the point $\Achart(b_i)$.

\begin{subproposition*}
$\Bman_i$ is also a connected component of $\Aedg$.
\end{subproposition*}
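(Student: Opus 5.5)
We need to show that $\Bman_i$ is not only a component of $\Wman\cap\Aedg$ but a full component of $\Aedg$. Let $\Cman$ be the connected component of $\Aedg$ containing $\Achart(b_i)$; clearly $\Bman_i\subset\Cman$. The plan is to show that $\Bman_i$ is clopen in $\Cman$; connectedness of $\Cman$ then gives $\Bman_i=\Cman$.

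First, we identify $\Bman_i$ explicitly. Since $\Achart$ is a homeomorphism of $\Acomp$ onto $\Wman$, the set $\Achart^{-1}(\Bman_i)$ is the connected component of $\Acomp\setminus\Achart^{-1}(\Avrt)$ containing $b_i$. This component is an interval $(c;d)$ with $a_{i+1}\le c<b_i<d\le a_i$, because $a_{i+1},a_i\in\Achart^{-1}(\Avrt)$. Moreover $c,d\in\Achart^{-1}(\Avrt)$, since $\Achart^{-1}(\Avrt)$ is closed in $\Acomp$ (step 2.1). Hence $\Bman_i=\Achart\bigl((c;d)\bigr)$ and its closure in $\Wman$ is the compact set $\Achart\bigl([c;d]\bigr)\subset\Wman$.

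\textbf{Openness.} $\Bman_i$ is open in $\Aone$, since it is open in $\Wman$ (a component of an open subset of an interval). Hence it is open in $\Cman$.

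\textbf{Closedness in $\Cman$.} This is the step I expect to be the main obstacle, since $\Aone$ is non-Hausdorff and the closure of $\Bman_i$ in $\Aone$ may contain points outside $\Wman$. I would handle it as follows. The set $\Kman:=\Achart\bigl([c;d]\bigr)$ is compact, so it suffices to see that it is closed in $\Aone$. Then $\Cl{\Bman_i}\subset\Kman$, and
\[
\Cl{\Bman_i}\cap\Aedg\subset\Kman\cap\Aedg=\Bman_i,
\]
because $\Achart(c),\Achart(d)\in\Avrt$. Closedness of $\Kman$ does not follow from Lemma~\ref{lm:ahaus}\ref{enum:lm:ahaus:compact} directly, because the endpoints lie in $\Avrt$. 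Instead I would argue directly. Suppose $y\in\Cl{\Bman_i}\setminus\Kman$. By Lemma~\ref{lm:ok_emb}, applied to the open embedding $\Achart|_{(c;d)}$, the point $y$ lies in a limit set of one end, say $y\in\lms{0}$. By Lemma~\ref{lm:okc_emb}\ref{enum:lm:okc_emb:cont_ext} (note that $\Achart|_{(c;d)}$ is an OKC-embedding, since its image lies in $\Ahaus$), continuity of the extension to $c$ with value $\Achart(c)$ gives $y\in\hcl{\Achart(c)}$. But we want to exclude $y\in\Aedg$, and points of $\Aedg$ are Hausdorff points; so $y\in\hcl{\Achart(c)}$ with $y\neq\Achart(c)$ would make $y$ a branch point, contradicting $y\in\Aedg$. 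Therefore $\Cl{\Bman_i}\cap\Aedg\subset\Bman_i$, i.e.\ $\Bman_i$ is closed in $\Aedg$, and in particular closed in $\Cman$. This argument bypasses the need for $\Kman$ itself to be closed.

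Combining openness and closedness, $\Bman_i$ is a nonempty clopen subset of the connected set $\Cman$, so $\Bman_i=\Cman$, which is a connected component of $\Aedg$.
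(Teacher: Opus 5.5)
Your proof is correct and follows the paper's argument. Openness comes from $\Bman_i$ being a component of the open set $\Wman\setminus\Avrt$, and closedness in $\Aedg$ comes from Lemma~\ref{lm:ok_emb} and Lemma~\ref{lm:okc_emb}\ref{enum:lm:okc_emb:cont_ext}, which give $\Cl{\Bman_i}\setminus\Bman_i\subset\hcl{\Achart(c)}\cup\hcl{\Achart(d)}\subset\Avrt$. Your case split on whether $y$ lies in $\Achart\bigl([c;d]\bigr)$ just unpacks the inclusion $\hcl{\Achart(c)}\subset\{\Achart(c)\}\cup\Abr$; since, as you note, $\Achart\bigl([c;d]\bigr)$ need not be closed, the remark that it ``suffices'' to prove its closedness should simply be dropped.
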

\begin{proof}
Indeed, note that $\Wman\setminus\Avrt$ is an \term{open} subset of the locally connected space $\Aone$, and hence its connected component $\Bman_i$ is open in $\Aone$, and therefore also in $\Aedg$.

On the other hand, $\Achart^{-1}(\Bman_i) = (c;d)$ for some $c<d \in (0;1)$.
Since $\Bman_i \subset\Ahaus$, by Lemma~\ref{lm:ahaus}\ref{enum:lm:ahaus:compact} each of its compact subsets is closed in $\Aone$, i.e.\ $\restr{\Achart}{(c;d)}\colon (c;d)\to\Bman_i$ is an OKC"=embedding, see Definition~\ref{def:oks_embeddings}.
Let $\lms{c}=\lms{c}(\restr{\Achart}{(c;d)})$ and $\lms{d}=\lms{d}(\restr{\Achart}{(c;d)})$ be, respectively, the left and right limit sets of this embedding.
Then, by Lemma~\ref{lm:ok_emb}, $\Cl{\Bman_i}\setminus\Bman_i \stackrel{\eqref{equ:clJ_d0_J_d1}}{=} \lms{c} \cup \lms{d}$.
Moreover, since $\restr{\Achart}{(c;d)}$ extends to a continuous map of the closed segment $[c;d]\to\Aone$, by Lemma~\ref{lm:okc_emb}\ref{enum:lm:okc_emb:cont_ext},
\[
    \Cl{\Bman_i}\setminus\Bman_i =
    \lms{c} \cup \lms{d} \subset \hcl{\Achart(c)} \cup \hcl{\Achart(d)} \subset \Avrt.
\]
It follows that $\Bman_i = \Cl{\Bman_i}\cap\Aedg$, and hence $\Bman_i$ is also \term{closed} in $\Aedg$.

Thus $\Bman_i$ is a connected component of $\Aedg$.
\end{proof}

Since the sequence $\{b_i\}_{i\in\bN}$ converges to $0$, as a consequence we obtain that \term{every neighbourhood $\Uman$ of $x$ must contain infinitely many such components $\Bman_i$}.

On the other hand, let $y = \apr(x) \in \Xvrt$ be the corresponding vertex of $\Xone$.
By Lemma~\ref{lm:1cw_vert_nbh}\ref{enum:lm:1cw_vert_nbh:nbh}, see~\eqref{equ:Ux_Vx}, there exists an open neighbourhood $\Vman_{y}$ of $y$ that \term{contains no edge of $\Xedg$}.
This means that the open neighbourhood $\Uman_{x} := \apr^{-1}(\Vman_{y})$ of $x$ contains no connected component of the set $\Aedg = \Aone\setminus\Avrt$.
This contradicts the previous property.

Therefore the closed set $\Avrt$ must also be discrete, and hence it is locally finite.

\subsection*{Proof of the implication \ref{enum:th:1cw_struct:loc}$\Rightarrow$\ref{enum:th:1cw_struct:1cw}}

We need to construct an atlas of a topological graph for $\Xone$ for which $\Xvrt$ is the set of vertices.
The proof consists of a series of claims.

Recall that for arbitrary $a<b\in\bR$ the notations for the intervals $(a;b)$ and $(b;a)$ are identical.

\begin{enumerate}[wide, label={\rm{A\arabic*)}}, itemsep=1em]
\newcommand\statement[2][\empty]{\item\ifx#1\empty\relax\else\label{#1}\fi\emph{#2}\par}
\statement[enum:Avrt_closed_discr]
{The set $\Avrt = \Abr \cup \partial\Aone$ is locally finite, and hence, by Lemma~\ref{lm:T1_loc_fin__closed_discr}, it is closed and discrete.}
By hypothesis, $\Abr$ is locally finite.
Moreover, $\partial\Aone$ is locally finite by definition.
Therefore $\Avrt = \Abr \cup \partial\Aone$ is also locally finite.

\statement[enum:Aedg_open_dense]
{The set $\Aedg = \Ahaus \setminus \partial\Aone$ is open and everywhere dense, and each of its connected components is homeomorphic to an open interval.}
Openness and everywhere density of $\Aedg = \Aone\setminus\Avrt$ follow from~\ref{enum:Avrt_closed_discr}.

Since $\Aone$ is a locally connected space and $\Aedg$ is open, each connected component $\Acomp$ of $\Aedg$ must also be open.
Moreover, by Lemma~\ref{lm:ahaus}\ref{enum:lm:ahaus:emb}, $\Aedg$, and hence $\Acomp$, are Hausdorff spaces.
Thus $\Acomp$ is a connected Hausdorff one-dimensional manifold with empty boundary, which, by condition~\ref{enum:th:1cw_struct:loc}, also has a countable base.
Therefore $\Acomp$ is homeomorphic either to $S^1$ or to $(0;1)$.

If $\Acomp\cong S^1$, then it is compact and, by Lemma~\ref{lm:ahaus}\ref{enum:lm:ahaus:compact}, it is also closed in $\Aone$.
Since $\Aone$ is connected, $\Aone = \Acomp \cong S^1$, contradicting the assumption that $\Aone\not\cong S^1$.
Thus $\Acomp \cong (0;1)$.

\statement{The set $\Xedg=\apr(\Aedg)$ is open, and $\Xvrt=\apr(\Avrt)$ is closed in $\Xone$.}
Note that $\Aedg$ and $\Avrt$ are $\cbr$"=saturated, i.e.\ $\Aedg = \apr^{-1}(\Xedg)$ and $\Avrt = \apr^{-1}(\Xvrt)$.
Now the openness of $\Xedg$ and the closedness of $\Xvrt$ follow from the fact that $\apr$ is a quotient map, together with the openness of $\Aedg$ and the closedness of $\Avrt$.

\statement[enum:clj_j]{$\Cl{\Acomp}\setminus\Acomp \subset \Avrt = \Aone\setminus\Aedg$ for each connected component $\Acomp$ of $\Aedg$.}
This is a direct consequence of Lemma~\ref{lm:open_disj_family}, since the components of $\Aedg$ are open and pairwise disjoint.

\statement[enum:jext]{For each connected component $\Acomp$ of $\Aedg$ there exists a continuous map $\Achart\colon\Kman\to\Aone$ of some segment $\Kman$ with endpoints $0$ and $1$, i.e.\ $(0;1)\subseteq\Kman\subseteq[0;1]$, such that
\begin{enumerate}[leftmargin=5em, label={\rm(A5-\arabic*)}]
\item\label{enum:jext:int} $\Achart$ homeomorphically maps $\Int{\Kman} = (0;1)$ onto $\Acomp$;
\item\label{enum:jext:bd} $\Achart(\partial\Kman)\subset\Avrt$;
\item\label{enum:jext:lms} $i \in \partial\Kman$ if and only if $\lms{i}(\Achart)\ne\varnothing$, and in this case $\lms{i}(\Achart) \subset \hcl{\Achart(i)}$.
\end{enumerate}
Moreover,
\begin{equation}\label{equ:achart_dK1}
    \Achart(\partial\Kman)
        \ \subset \
    \Cl{\Acomp} \setminus \Acomp
        \ \stackrel{\eqref{equ:clJ_d0_J_d1}}{=} \
        \mathop{\cup}\limits_{i \in \partial\Kman} \lms{i}(\Achart)
        \ \subset \
        \mathop{\cup}\limits_{i \in \partial\Kman} \hcl{\Achart(i)}
        \ \subset \
    \Avrt.
\end{equation}}

Indeed, since the image of $\Achart$ lies in $\Ahaus$, by Lemma~\ref{lm:ahaus}\ref{enum:lm:ahaus:compact} $\Achart$ is an OKC"=embedding.
In particular, its left and right limit sets $\lms{0}(\Achart)$ and $\lms{1}(\Achart)$ are well-defined.
Recall that by~\eqref{equ:clJ_d0_J_d1} and~\ref{enum:clj_j}, $\Cl{\Acomp} \setminus \Acomp = \lms{0}(\Achart) \cup \lms{1}(\Achart) \subset \Avrt$.

Add to $(0;1)$ those points $i\in \{0,1\}$ for which $\lms{i}(\Achart)\ne\varnothing$ and denote the resulting segment by $\Kman$:
\[
    \Kman = (0;1) \cup \{ i \mid \lms{i}(\Achart) \ne\varnothing\}.
\]
Furthermore, for $i\in\{0,1\}\cap\Kman$ choose an arbitrary point $x_i$ in the set $\lms{i}(\Achart)\ne\varnothing$ and extend $\Achart$ at the point $i$ by the rule $\Achart(i) = x_i$.
Then $x_i\in\Avrt$ and, by Lemma~\ref{lm:Jint}\ref{enum:lm:Jint:ext}, the resulting extension $\Achart\colon\Kman\to\Aone$ is continuous, with $\lms{i}(\Achart) \subset \hcl{x_i}$.
The validity of properties~\ref{enum:jext:int}, \ref{enum:jext:bd}, \ref{enum:jext:lms} and of inclusion~\eqref{equ:achart_dK1} for $\Achart$ is obvious.
\end{enumerate}

{\bf Construction of an atlas for $\Xone$.}
Let $\{\Acomp[\lambda]\}_{\lambda\in\Lambda}$ be the set of all distinct connected components of $\Aedg$.
Since $\Aone \not\cong S^1$, by~\ref{enum:jext}, for each connected component $\Acomp[\lambda]$, $\lambda\in\Lambda$, of $\Aedg$ there exists a continuous map $\Achart[\lambda]\colon\Kman_{\lambda}\to\Aone$ of some segment $\Kman_{\lambda}$ with endpoints $0$ and $1$ such that
\begin{enumerate}[label={\rm(\alph*)}]
\item\label{enum:phil:int}
$\Achart[\lambda]$ homeomorphically maps $\Int{\Kman_{\lambda}} = (0;1)$ onto $\Acomp[\lambda]$;
\item\label{enum:phil:bd}
$\Achart[\lambda](\partial\Kman_{\lambda}) \subset \Avrt$;
\item\label{enum:phil:lms}
$i\in\partial\Kman_{\lambda}$ if and only if $\lms{i}(\Achart[\lambda]) \ne \varnothing$, and in this case $\lms{i}(\Achart[\lambda]) \subset \hcl{\Achart[\lambda](i)}$.
\end{enumerate}
Let further $\Eman = \mathop{\sqcup}\limits_{\lambda\in\Lambda} (\Kman_{\lambda} \times \{\lambda\}) \subset [0;1]\times\Lambda$ be the disjoint union of these segments,
\[
    \Achart = \mathop{\sqcup}\limits_{\lambda\in\Lambda} \Achart[\lambda] \colon\Eman\to\Aone,
    \qquad
    \restr{\Achart}{\Kman_{\lambda}} = \Achart[\lambda],
\]
-- the induced map, and $\func = \apr\circ\Achart\colon\Eman\to\Xone$.
Our goal is to prove that $\func$ is the desired atlas of an open one-dimensional CW complex.
For this it suffices to verify the conditions of Lemma~\ref{lm:char:1cw}\ref{enum:lm:char:1cw:quotient}, see statement~\ref{enum:f_is_an_atlas} below.
But first we describe some properties of the map $\func$.

\begin{enumerate}[wide, label={\rm{A\arabic*)}}, itemsep=1em, start=6]
\newcommand\statement[2][\empty]{\item\ifx#1\empty\relax\else\label{#1}\fi\emph{#2}\par}

\statement[enum:func_int]{$\func$ homeomorphically maps $\Int{\Eman}$ onto $\Xedg$.}
By construction, each $\Achart[\lambda]$ homeomorphically maps $\Int{\Kman_{\lambda}} = (0;1)$ onto $\Acomp[\lambda]$.
Therefore $\Achart$ induces a homeomorphism $\Int{\Eman} = \mathop{\sqcup}\limits_{\lambda\in\Lambda}\Int{\Kman_{\lambda}}$ onto $\mathop{\sqcup}\limits_{\lambda\in\Lambda}\Acomp[\lambda] = \Aedg = \Ahaus\setminus\partial\Aone$.
Since, by Lemma~\ref{lm:ahaus}\ref{enum:lm:ahaus:emb}, $\apr$ further homeomorphically maps $\Ahaus$ onto an open subset of $\Xone$, the map $\func = \apr\circ\Achart$ induces a homeomorphism $\Int{\Eman}$ onto $\apr(\Aedg)=\Xedg$.

\statement[enum:brpt_int]
{For each point $x\in\Abr \cap \Int{\Aone}$ there exist an open neighbourhood $\Wman$, a homeomorphism $\Bchart\colon(-1;1)\to\Wman$,
 $\lambda,\mu\in\Lambda$, $i\in\partial\Kman_{\lambda}$,  $j\in\partial\Kman_{\mu}$,  $\lbnd, \mbnd\in(0;1)$ such that
\begin{enumerate}[leftmargin=5em, label={\rm(A7-\arabic*)}]
\item\label{enum:brpt_int:0} $\Bchart(0)=x$;
\item\label{enum:brpt_int:p} $\lComp := \Achart[\lambda]\bigl((i;\lbnd)\bigr) = \Bchart\bigl((-1;0)\bigr) \subset \Acomp[\lambda]$;
\item\label{enum:brpt_int:q} $\mComp := \Achart[\mu]\bigl((j;\mbnd)\bigr) = \Bchart\bigl((0;1)\bigr) \subset \Acomp[\mu]$;
\item\label{enum:brpt_int:ij} $\Achart[\lambda](i) \cbr x \cbr \Achart[\mu](j)$;
\item\label{enum:brpt_int:another}
for arbitrary $\libnd,\mjbnd\in(0;1)$ the set $\Vman = \Achart[\lambda]\bigl((i;\libnd)\bigr) \cup \{x\} \cup \Achart[\mu]\bigl((j;\mjbnd)\bigr)$
is an open neighbourhood of $x$.
\end{enumerate}
}
Indeed, since $\Abr$ is discrete, $x$ has a neighbourhood $\Wman$ homeomorphic to $(-1;1)$ and such that $\Wman\setminus\{x\} \subset \Ahaus \setminus\partial\Aone = \Aedg$.
Note that $\Wman\setminus\{x\}$ has two connected components $\lComp$ and $\mComp$, which are also homeomorphic to open intervals.
Therefore they are contained in some connected components of $\Aedg$, say $\lComp \subset \Acomp[\lambda]$ and $\mComp \subset \Acomp[\mu]$ for some $\lambda,\mu\in\Lambda$.

Fix an arbitrary homeomorphism $\Bchart\colon(-1;1)\to\Wman$ such that $\Bchart(0)=x$, $\Bchart\bigl((-1;0)\bigr)=\lComp$, and $\Bchart\bigl((0;1)\bigr)=\mComp$.
From Lemma~\ref{lm:Jint} it now follows that $x\in\Cl{\Acomp[\lambda]} \cap \Cl{\Acomp[\mu]}$.
Therefore, by~\eqref{equ:clJ_d0_J_d1}, we have $x\in\lms{i}(\Achart[\lambda]) \cap \lms{j}(\Achart[\mu])$ for some $i,j\in\{0,1\}$.
But then, by statements~\ref{enum:lm:Jint:NWJ:0} and~\ref{enum:lm:Jint:NWJ:1} of Lemma~\ref{lm:Jint}, there exist $s,t\in[0;1]$ such that
\begin{align*}
    i &\in\partial\Kman_{\lambda}, &
    \Achart[\lambda]\bigl((i;s)\bigr) &= \lComp, &
    \lms{i}(\Achart[\lambda]) &\subset \hcl{\Achart[\lambda](i)}, \\
    j&\in\partial\Kman_{\mu}, &
    \Achart[\mu]\bigl((j;t)\bigr) &= \mComp, &
    \lms{j}(\Achart[\mu]) &\subset \hcl{\Achart[\mu](j)}.
\end{align*}
In particular, $x\in \lms{i}(\Achart[\lambda]) \cap \lms{j}(\Achart[\mu]) \subset  \hcl{\Achart[\lambda](i)} \cap \hcl{\Achart[\mu](j)}$, whence $\Achart[\lambda](i) \cbr x \cbr \Achart[\mu](j)$.

Finally, let $\libnd,\mjbnd\in(0;1)$ be arbitrary numbers.
Denote
\begin{align*}
\liComp &= \Achart[\lambda]\bigl((i;\libnd)\bigr), &
\lSegm  &= \Achart[\lambda]\bigl([\lbnd;\libnd]\bigr), &
\mjComp &= \Achart[\mu]\bigl((j;\mjbnd)\bigr), &
\mSegm  &= \Achart[\mu]\bigl([\mbnd;\mjbnd]\bigr).
\end{align*}
Note that
\begin{align*}
    \Wman &= \lComp \cup \{x\} \cup \mComp, &
    \Vman &= \liComp \cup \{x\} \cup \mjComp.
\end{align*}
We need to check the openness of $\Vman$.

Since $\Achart$ is an open embedding, $\liComp$ and $\mjComp$ are open in $\Aone$.
On the other hand, $\lSegm$ and $\mSegm$ are compact subsets of $\Ahaus$, and therefore, by Lemma~\ref{lm:ahaus}\ref{enum:lm:ahaus:compact}, they are closed in $\Aone$.
Now
\begin{itemize}
\item if $\lComp\subset\liComp$ and $\mComp\subset\mjComp$, then $\Vman = \liComp \cup \Wman \cup \mjComp$;
\item if $\lComp\subset\liComp$ and $\mjComp\subset\mComp$, then $\Vman = \liComp \cup (\Wman \setminus \mSegm)$;
\item if $\liComp\subset\lComp$ and $\mComp\subset\mjComp$, then $\Vman = (\Wman \setminus \lSegm) \cup \mjComp$;
\item if $\liComp\subset\lComp$ and $\mjComp\subset\mComp$, then $\Vman = \Wman \setminus (\lSegm \cup \mSegm)$.
\end{itemize}
Thus, in all four cases $\Vman$ is open.

\statement[enum:brpt_bd]
{For each point $x\in\partial\Aone$ there exist an open neighbourhood $\Wman$, a homeomorphism $\Bchart\colon[0;1)\to\Wman$, $\lambda\in\Lambda$, $i\in\partial\Kman_{\lambda}$, $\lbnd\in(0;1)$ such that
\begin{enumerate}[leftmargin=4em, label={\rm(A8-\arabic*)}]
\item\label{enum:brpt_bd:0} $\Bchart(0)=x$;
\item\label{enum:brpt_bd:p} $\lComp := \Achart[\lambda]\bigl((i;\lbnd)\bigr) = \Bchart\bigl((0;1)\bigr) \subset \Acomp[\lambda]$;
\item\label{enum:brpt_bd:ij} $\Achart[\lambda](i) \cbr x$.
\item\label{enum:brpt_bd:another}
for each $\libnd\in(0;1)$ the set $\Vman = \Achart[\lambda]\bigl((i;\libnd)\bigr) \cup \{x\}$ is an open neighbourhood of $x$.
\end{enumerate}}
The proof is completely analogous to~\ref{enum:brpt_int}.

\statement[enum:func_bd]{$\func(\partial\Eman) = \Xvrt$.}
From property~\ref{enum:phil:bd} it follows that $\Achart(\partial\Eman) =
    \mathop{\cup}\limits_{\lambda\in\Lambda}\Achart[\lambda](\partial\Kman_{\lambda})
    \subset \Abr\cup\partial\Aone = \Avrt$,
and therefore
\begin{equation}\label{equ:fdE__piBrBd}
    \func(\partial\Eman)
        = \apr\circ\Achart(\partial\Eman) \subset \apr(\Avrt)
        = \Xvrt.
\end{equation}
Conversely, let $q\in\Xvrt$ and $x\in \apr^{-1}(q) \subset \Abr\cup\partial\Aone$ be an arbitrary point.
Then statements~\ref{enum:brpt_int:ij} and~\ref{enum:brpt_bd:ij} show that there exist $\lambda\in\Lambda$ and $i\in\partial\Kman_{\lambda} \subset \partial\OEman$ such that $x$ is chain inseparable from $\Achart[\lambda](i)$.
Therefore $q = \apr(x) = \apr(\Achart[\lambda](i)) = \func(i) \in \func(\partial\OEman)$.
Thus $\func(\partial\Eman) = \Xvrt$.

\statement[enum:f_quotient_map]
{The map $\func = \apr\circ\Achart\colon\Eman\to\Xone$ is a quotient map.}

By construction, $\func$ is continuous.
Therefore it suffices to show that if $\Bman\subset\Xone$ is a subset such that \emph{$\func^{-1}(\Bman)$ is open in $\Eman$, then $\Bman$ is open in $\Xone$}.
Since $\apr$ is a quotient map, it is enough to check that \emph{$\apr^{-1}(\Bman)$ is open in $\Aone$}.

For each $\lambda\in\Lambda$, set $\Aman_{\lambda} = \func^{-1}(\Bman) \cap \Kman_{\lambda}$.
Since $\func^{-1}(\Bman)$ and the segment $\Kman_{\lambda}$ are open subsets of $\Eman$, we have
\[
    \Aman_{\lambda} =
        \mathop{\sqcup}\limits_{i\in\partial\Kman_{\lambda}} [i; \eps_{i,\lambda})
        \ \bigsqcup \
        \mathop{\sqcup}\limits_{j} (s_{j,\lambda}; t_{j,\lambda})
\]
as a disjoint union of open intervals contained in $\Int{\Kman_{\lambda}} = (0;1)$ and half-open neighbourhoods of the boundary points $\partial\Kman_{\lambda}$.

We need to prove that $\apr^{-1}(\Bman)$ is an open subset of $\Aone$, i.e.\ that each point $x\in\apr^{-1}(\Bman)$ is interior.

1. Suppose that $x \in\apr^{-1}(\Bman)$ also belongs to $\Aedg=\Ahaus\setminus\partial\Aone = \Achart(\Int{\Eman})$.
Then $x = \Achart[\lambda](q)$ for some $\lambda\in\Lambda$ and a point $q \in (s_{j,\lambda}; t_{j,\lambda}) \subset  \Aman_{\lambda}$.
Since $\Achart[\lambda]$ homeomorphically maps $(0;1)$ onto the open subset $\Acomp[\lambda]$ of $\Aone$, the set $\Achart[\lambda]\bigl((s_{j,\lambda}; t_{j,\lambda})\bigr)$ is an open neighbourhood of $x$ in $\Aone$ which is contained in $\apr^{-1}(\Bman)$.
Thus $x$ is interior to $\apr^{-1}(\Bman)$.

2. Now let $x \in\apr^{-1}(\Bman) \cap \Avrt$.

2.1. First assume that $x\in\Int{\Aone}$.
Let $\Wman$, $\Bchart\colon(-1;1)\to\Wman$, $\lambda,\mu\in\Lambda$, $i\in\partial\Kman_{\lambda}$,$j\in\partial\Kman_{\mu}$, $\lbnd, \mbnd\in(0;1)$ be as in~\ref{enum:brpt_int:another}.
Since $\Achart[\lambda](i) \cbr x \cbr \Achart[\mu](j)$, we have
\[
    i,j \in \Achart^{-1}\bigl(\apr^{-1}(\apr(x))\bigr) = \func^{-1}(\apr(x)) \subset \func^{-1}(\Bman).
\]
Hence $[i;\eps_{i,\lambda})\times\{\lambda\}, [j;\eps_{j,\mu})\times\{\mu\} \subset  \func^{-1}(\Bman)$.
But then from~\ref{enum:brpt_int:another} we obtain that the set
\[
    \Vman = \Achart[\lambda]\bigl((i;\eps_{i,\lambda})\bigr) \cup \{x\} \cup \Achart[\mu]\bigl((j;\eps_{j,\mu})\bigr)
\]
is an open neighbourhood of $x$.
It remains to note that $\Vman\subset\apr^{-1}(\Bman)$.

2.2. The case $x\in\partial\Aone$ is proved similarly using~\ref{enum:brpt_bd}.
We leave the details of the proof to the reader.

\statement[enum:f_is_an_atlas]{The composition $\func = \apr\circ\Achart\colon\Eman\to\Xone$ is the desired atlas of an open one-dimensional CW complex with vertex set $\apr(\Abr\cup\partial\Aone)$.}
It suffices to verify the conditions of Lemma~\ref{lm:char:1cw}\ref{enum:lm:char:1cw:quotient}.
By~\ref{enum:f_quotient_map}, the map $\func$ is a quotient map.
Moreover, by~\ref{enum:func_int}, the restriction $\restr{\func}{\Int{\Eman}}\colon\Int{\Eman} \to \Xone$ is injective.
Finally, from~\ref{enum:func_int} and~\ref{enum:func_bd} we obtain that $\func(\partial\Eman) \cap \func(\Int{\Eman}) = \Xvrt \cap \Xedg = \varnothing$.

\statement[enum:minimal_haus_quotient]{$\Xone$ is Hausdorff and for every continuous map $f\colon\Aone\to\Bone$ into a Hausdorff space $\Bone$ there exists a unique continuous map $\hat{f}\colon\Xone\to\Bone$ making the following diagram commute:
\[
\xymatrix{
    \Aone \ar[rr]^{f} \ar[rd]_{\apr[\Aone]} & & \Bone \\
    & \Xone \ar[ur]_{\hat{f}}
}
\]
}

Hausdorffness of CW complexes is well known, and for the one-dimensional case it is shown in Lemma~\ref{lm:1cw_vert_nbh}\ref{enum:lm:1cw_vert_nbh:haus}, while the existence of the map $\hat{f}$ is shown in Lemma~\ref{lm:f_hcl_to_hcl}\ref{enum:lm:f_hcl_to_hcl:hausd}.
\qed
\end{enumerate}

\subsection*{Additional information}
The authors contributed equally to this work. There are no conflicts of interest.

\subsection*{Acknowledgements}
This work was supported by a grant from the Simons Foundation (SFI-PD-Ukraine 00014586, S.I.M. and V.I.Y.).
The authors are sincerely grateful to the anonymous referee for the detailed analysis of the paper, valuable remarks, and indications of the inaccuracies present in the initial version of the paper.


\end{document}